\documentclass[a4paper,oneside,11pt]{article}
\usepackage{a4wide} 
\usepackage{epsfig}
\usepackage{comment}
\usepackage[colorlinks, linkcolor = black, citecolor = black, filecolor = black, urlcolor = blue]{hyperref} 
\usepackage{xcolor}
\usepackage{graphicx}
\usepackage{natbib}
\usepackage[utf8]{inputenc}
\usepackage{amsmath}%
\usepackage{subcaption}%
\usepackage{url}
\usepackage{amssymb}%
\usepackage{amsthm}%
\usepackage{color}
\usepackage{verbatim}
\bibliographystyle{apalike}
\theoremstyle{plain}
\newtheorem{theorem}{Theorem}%
\newtheorem{lemma}{Lemma}%
\theoremstyle{definition}
\newtheorem{example}{Example}%
\newtheorem{remark}{Remark}%
\newtheorem{assumption}{Assumption}%

%
%
%

%

\renewcommand\theta{\vartheta}
\renewcommand\epsilon{\varepsilon}

\newcommand{\N}{\ensuremath{\mathbb{N}}}%
\newcommand{\Z}{\ensuremath{\mathbb{Z}}}%
\newcommand{\R}{\ensuremath{\mathbb{R}}}%
\newcommand{\F}{\ensuremath{\mathcal{F}}}%
%
%
%
%
%
%
%
%
%
%
%
%
%
%

%
%
%
%
%
%
%

%
\newcommand{\mom}{\ensuremath{\mathrm{M}}}

%

\renewcommand\theta{\vartheta}

\usepackage{natbib}
\parindent = 0pt 
\begin{document}
%
%
\LARGE
\vspace*{0.5cm}
\begin{center}
{\bfseries Simultaneous inference for   Berkson errors-in-variables regression  under fixed design
\\*[0.4cm]}
%
\large

\bigskip
\Large
Katharina Proksch\footnote{Corresponding author: Dr.~Katharina Proksch, 
	Department of Applied Mathematics, University of Twente, Enschede, The Netherlands, 
	Email: k.proksch@utwente.nl}, Nicolai Bissantz$^2$ and 
 Hajo Holzmann$^3$\\*[0.1cm]
\large
$^1$University of Twente, The Netherlands\\
 Department of Applied Mathematics\\*[0.2cm]
$^2$Fakult{\"a}t f{\"u}r Mathematik\\
Ruhr-Universit{\"a}t Bochum, Germany\\*[0.2cm]
$^3$Fachbereich Mathematik und Informatik\\
Philipps-Universit{\"a}t Marburg, Germany\\*[0.2cm]
\today

\end{center}
\normalsize
\begin{abstract}
	In various applications of regression analysis, in addition to errors in the dependent observations also  errors in the predictor variables 
	play a substantial role and need to be incorporated in the statistical modeling process.
	In this paper we consider a nonparametric measurement error model of Berkson type with fixed design regressors and centered random errors, which is in contrast to much existing work in which the predictors are taken as random observations with random noise. 
	Based on an estimator that takes the error in the predictor into account and on a suitable Gaussian approximation, we derive 
	finite sample bounds on the coverage error of uniform confidence bands, where we circumvent the use of extreme-value theory
	and rather rely on recent results on anti-concentration of Gaussian processes.
	In a simulation study we investigate the performance of the uniform confidence sets for finite samples.
	\end{abstract}

	\textit{Keywords:} Berkson errors-in-variables; deconvolution; Gaussian approximation; uniform confidence bands

\section{Introduction}
\label{intro}
In mean regression problems a predictor variable $X$, either a fixed design point or a random observation, is used to explain a response variable $Y$ in terms of the conditional mean regression function  $g(x)=\mathbb{E}[Y|X=x]$. The case of a random covariate occurs when both  $X$ and $Y$ are measured during an experiment and the case of fixed design corresponds to situations in which covariates can be set by the experimenter
such as a machine setting, say, in a physical or engineering experiment.
Writing $\epsilon = Y - \mathbb{E}[Y|X]$ gives the standard form of the non-parametric regression model 
$
Y=g(X)+\epsilon,
$
that is, the response is observed with an additional error but the predictor can be set or measured error-free. In many experimental settings this is not a suitable model assumption since either the predictor can also not be measured precisely, or since the presumed setting of the predictor does not correspond exactly to its actual value. There are subtle differences between these two cases, which we illustrate by the example of drill core measurements of the content
of climate gases in the polar ice. Assume that the content of climate gas $Y$ at the bottom of a drill hole is quantified. The depth of the drill hole $X$ is measured independently with error $\Delta$ giving the observation  $W$.
A corresponding regression model is of the following form
\begin{align}\label{classic}
	Y = g(X) + \epsilon, \qquad W=X +\Delta,
\end{align}
where  $W$, $\Delta$ and $\epsilon$ are independent,  $\Delta$ and $\epsilon$ are centered, and observations of $(Y, W)$ are available.
This model is often referred to as \textit{classical errors-in-variables model}.
A change in the experimental set-up might require a change in the model that is imposed. Assume that in our drill core experiment we fix specific depths $w$ at which the drill core is to be analyzed. However, due to imprecisions of the instrument we cannot accurately
fix the desired value of $w$, rather the true (but unknown) depth where the measurement is acquired is $w+\Delta$. In this case a corresponding model, referred to as \textit{Berkson errors-in-variables model} \citep{Berkson1950},  is of the form
\begin{align}\label{Berkson}
	Y = g(w+\Delta) + \epsilon, 
\end{align} 
where  $\Delta$ and $\epsilon$ are independent and centered, $w$ is set by the experimenter and $Y$ is observed.
In this paper we construct uniform confidence bands in the non-parametric Berkson errors-in-variables model with fixed design \eqref{Berkson}. In particular, we provide finite sample bounds on the coverage error of these bands. We also address the question how to choose the grid when approximating the supremum of a Gaussian process on $[0,1]$.
For Berkson-type measurement errors, a fixed design as considered in the present paper seems to be of particular relevance in experimentation in physics and engineering. 
Instead of using the classical approach based on results from extreme-value theory \citep{bicros1973a}, we propose a multiplier bootstrap procedure and construct asymptotic uniform confidence regions by using anti-concentration properties of  Gaussian processes which were recently derived by \cite{CheCheKat2014}.

	For an early, related contribution see \citet{neupol1998}, who develop the wild bootstrap originally proposed by \citet{wu1986jackknife} to construct confidence bands in a nonparametric heteroscedastic regression model with irregular design. While their method could potentially also be adopted in our setting, we preferred to work with the multiplier bootstrap which allows for a more transparent analysis.   
\\ 
%
\indent There is a vast literature on errors-in-variables models, where most of the earlier work is focused on parametric models \citep{Berkson1950, Anderson1984,Stefanski1985,Fuller1987}. A more recent overview of different models and methods can be found in the monograph by \cite{CarRupSteCra2006}.
In a non-parametric regression context, \cite{FanTru1993} consider the classical errors-in-variables setting \eqref{classic}, construct a kernel-type deconvolution estimator and investigate its asymptotic performance with respect to  weighted $L_p$-losses and $L_{\infty}$-loss and show rate-optimality for both ordinary smooth  and super smooth known distributions of errors $\Delta.$ The case of Berkson errors-in-variables with random design  is treated, e.\,g., in \cite{DelHalQiu2006}, who also assume a known error distribution,  \cite{Wang1993}, who assumes a parametric form of the error density, and \cite{Schennach2013}, whose method relies on the availability of an instrumental variable instead of the full knowledge of the error distribution. Furthermore, \cite{DelHalMei2008} consider the case in which the error-distribution is unknown but repeated measurements are available to estimate the error distribution. 
A mixture of both types of errors-in-variables is considered in \cite{CarDelHal2007} and the estimation of the observation-error variance is studied in \cite{DelHal2011}.\\
\indent However, in the aforementioned papers the focus is on estimation techniques and the investigation of  theoretical as well as numerical  performance of the estimators under consideration. In the non-parametric setting only very little can be found about the construction of statistical tests or confidence statements.
Model checks in the Berkson measurement error model are developed in \cite{KouSon2008,KouSon2009}, who construct goodness-of-fit tests for a parametric point hypothesis based on an empirical process approach and on a minimum-distance principle for estimating the regression function, respectively. 
The construction of confidence statements seems to be discussed only for classical errors in variables models with random design in \cite{DelHalJam2015}, who focus on pointwise confidence bands based on bootstrap methods and in \citet{kato2019uniform}, who provide uniform confidence bands.\\
\indent This paper is organized as follows. In Section \ref{sec:Pre} we discuss the mathematical details of our model and describe non-parametric methods for estimating the regression function in the fixed design Berkson model.  In Section \ref{sec:tech} we state the main theoretical results and in particular discuss the construction of confidence bands in Section \ref{sec:algorithms}, where we also discuss the choice of the bandwidth. The numerical performance of the proposed confidence bands is investigated in Section \ref{sec:sim}. Section \ref{sec:extensions} outlines an extension to error densities for which the Fourier transform is allowed to oscillate. 
Some auxiliary lemmas are stated in Section \ref{sec:Aux}. Technical proofs of the main results from Section \ref{sec:tech} are provided in Section \ref{Sec:Proofs}, while details and proofs for the extension in Section \ref{sec:extensions} along with some additional technical details are given in the Appendix, Section \ref{sec:appendix}. 
In the following, for a function $f$, which is bounded on some given interval $[a,b]$, we denote by $\|f\| = \|f\|_{[a,b]} = \sup_{x \in [a,b]}|f(x)|$ its supremum norm. The $L^p$-norm of $f$ over all of $\R$ is denoted by $\|f\|_p$. Further, for $w \in \R$ we set $\langle w\rangle:=(1+w^2)^{\frac{1}{2}}$.

\section{The Berkson errors-in-variables model with fixed design}\label{sec:Pre}
The Berkson errors-in-variables model with fixed design that we shall consider is given by
\begin{align}\label{model}
	Y_j = g(w_j + \Delta_j) + \epsilon_j,
\end{align}
where $ w_j = j/(n\,a_n)$, $j=-n, \ldots, n$, are the design points on a regular grid, $a_n$ is a design parameter that satisfies $a_n \to 0$, $n a_n \to \infty$, and $\Delta_j$ and $\epsilon_j$ are unobserved, centered, independent and identically distributed errors for which ${\rm Var} [\epsilon_1] = \sigma^2>0$ and $\mathbb{E}|\varepsilon_1|^{\mom}<\infty$ for some $\mom>2$. The density $f_\Delta$ of the errors $\Delta_j$ is assumed to be known. For ease of notation, we consider an equally spaced grid of design points here. However, this somewhat restrictive assumption can be relaxed to more general designs with a mild technical effort, as we elaborate in the Appendix, Section \ref{sec:nonequidist}. For random design Berkson errors-in-variables models, \cite{DelHalQiu2006} point out that identification of $g$ on a given interval requires an infinitely supported  design density if the error density  is of infinite support. This corresponds to our assumption that asymptotically, the fixed design exhausts the whole real line, which is assured by the requirements on the design parameter $a_n$. \cite{Meister2010} considers the particular case of normally distributed errors $\Delta$ and bounded design density, where a reconstruction of $g$ is possible by using an analytic extension.  
%
%
If we define $\gamma$ as the convolution of $g$ and $f_{\Delta}(-\cdot),$ that is,
\[
\gamma(w)=\int_{\R} g(z)f_{\Delta}(z-w)\,dz,
\]
then $\mathbb{E}[ Y_j] = \gamma(w_j)$, and the calibrated regression model \citep{CarRupSteCra2006} associated with \eqref{model} is given by 
\begin{align}\label{modeleta}
	Y_j = \gamma(w_j) + \eta_j, \qquad \eta_j = g(w_j + \Delta_j) - \gamma(w_j) + \epsilon_j.
\end{align}
Here the errors $\eta_j$ are independent and centered as well but no longer identically distributed since their variances $\nu^2(w_j) = \mathbb{E}[ \eta_j^2 ]$ depend on the design points. To be precise, we have that
\begin{align}\label{VarEta}
	\nu^2(w_j) = \int \big(g(w_j + \delta) - \gamma(w_j) \big)^2\, f_\Delta(\delta)\, d\, \delta + \sigma^2\geq \sigma^2>0.
\end{align}  
This reveals the increased variability due to the errors in the predictors. 
\begin{figure}[ht]
	\begin{center}
		\includegraphics[width=0.6\textwidth]{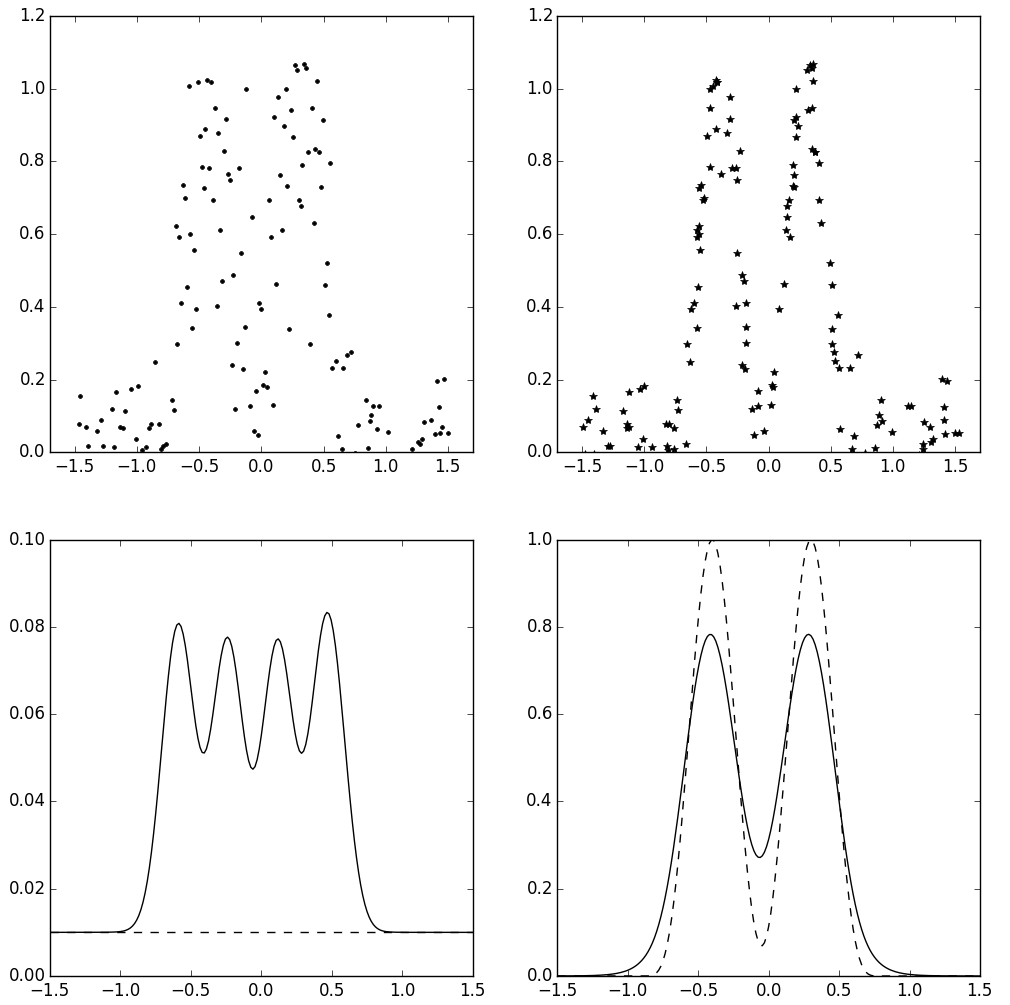}
	\end{center}
	\caption{\label{fig:ga} Alleged data points $(w_j, Y_j)$, actual data points $(w_j+\Delta_j,Y_j)$, a comparison between $g$ (dashed line) and  $\gamma$ (solid line) and a 
		comparison between $\sigma^2$ (dashed line) and $\nu^2$ (solid line) (clockwise from upper left to lower left).}
\end{figure}
The following considerations show that ignoring the errors in variables can lead to misinterpretations of the data at hand. To illustrate, in the setting of simulation Section \ref{sec:sim}, scenario 2, Figure \ref{fig:ga} (upper left panel) shows the alleged data points $(w_j, Y_j)$, that is, the observations at the incorrect, presumed positions, for a sample of size $n=100$. In addition to the usual variation introduced by the errors $\varepsilon_i$ in $y$-direction this display shows a variation in $x$-direction introduced by the errors in the $w_j$.  The upper right panel shows the actual but unobserved data points  $(w_j+\Delta_j,Y_j)$ that only contain the  variation in the $y$-direction.  Ignoring the errors-in-variables leads to estimating $\gamma$ instead of $g$, which introduces a systematic  error. The functions $\gamma$ (solid line) and $g$ (dashed line)  are both shown in the lower right panel of Figure \ref{fig:ga}.
The corresponding variance function is shown in the lower left panel of Figure \ref{fig:ga} (solid line) in comparison to the constant variance $\sigma^2$ (dotted line). 
Apparently, there is a close connection between the calibrated model \eqref{modeleta} and the classical deconvolution regression model as considered in \cite{birbishol2010} and \cite{prokschbissantzdette2012} in univariate and multivariate settings, respectively. In contrast to the calibrated regression model \eqref{modeleta}, in both works an i.i.d.~error structure is assumed.
Also, our theory provides finite sample bounds and is derived under weaker assumptions, requiring different techniques of proof. In particular, the  previous, asymptotic, results are derived under a stronger assumption on the convolution function $f_{\Delta}$.
To estimate $g$, we estimate the Fourier transform of $\gamma$, 
\[ \Phi_\gamma (t) = \int e^{i t w} \gamma(w)\, dw, \quad \text{by} \quad \widehat\Phi_\gamma(t) = \frac{1}{na_n}\sum\limits_{j=-n}^n Y_j e^{it w_j}.\]
An estimator for $g$ is then given by
\begin{equation}\label{eq:theestimator}
	\hat g_n (x; h) = \frac{1}{2\pi}\int_{\R}  e^{-i t x} \Phi_k(h t)\frac{\hat\Phi_\gamma (t )}{\Phi_{f_\Delta}(-t)}dt.
\end{equation}
Here $h>0$ is a smoothing parameter called the bandwidth, and $\Phi_k$ is the Fourier transform of a bandlimited kernel function $k$ that satisfies Assumption \ref{AssKernel} below. Notice that both $\Phi_{\gamma}$ and $\Phi_{f_{\Delta}}$ tend to zero as $|t|\to\infty$ such that estimation of $\Phi_{\gamma}$ in \eqref{eq:theestimator} introduces instabilities for large values of $|t|$. Since the kernel $k$ is bandlimited, the function $\Phi_k$ is compactly supported and the factor $\Phi_k(ht)$ discards large values of $t$, therefore serving as regularization. The estimator can be rewritten in kernel form as follows:
\[
\hat g_n(x;h) = \frac{1}{na_nh } \sum\limits_{j=-n}^n Y_j K\left(\frac{w_j-x}{h};h\right), 
\]
where the deconvolution kernel $K(\cdot;h)$ is given by
\begin{equation}\label{eq:asympkernel}
	K(w;h) = \frac{1}{2\pi}\int_{\R}  e^{-it w} \frac{\Phi_k(t)}{\Phi_{f_{\Delta}}(-t/h)} dt.
\end{equation}
\section{Theory}\label{sec:tech}
By $\mathcal{W}^m(\R)$ we denote the Sobolev spaces $\mathcal{W}^m(\R)=\{g\,|\,\|\Phi_g(\cdot) \,\langle\,\cdot\,\rangle^m \|_2<\infty\}$, $m>0$, where we recall that $\langle w\rangle:=(1+w^2)^{\frac{1}{2}}$ for $w \in \R$. 
We shall require the following assumptions. 
\begin{assumption}\label{AssSmoothness}
	The functions $g$ and $f_\Delta$ satisfy
	\begin{itemize}
		\item[(i)] $g\in\mathcal{W}^m(\R)\cap L^{r}(\R)\quad\text{ for all}\quad r\leq\mom$ and for some $m>5/2$,
		\item[(ii)] $ f_{\Delta}$ is a bounded, continuous, square-integrable density,
		\item[(iii)] $ \Phi_{g*f_{\Delta}}=\Phi_g\cdot\Phi_{f_{\Delta}}\in\mathcal{W}^{s}(\R)
		$ for some $s>1/2$.
	\end{itemize}
\end{assumption}
	Assumption \ref{AssSmoothness} (i) stated above is a smoothness assumption on the function $g$. In Lemma \ref{Le:g} in Section \ref{sub:propnu} we list the properties of $g$ that are frequently used throughout this paper and that are implied by this assumption. In particular, by Sobolev embedding, $m>5/2$ implies that the function $g$ is twice continuously differentiable, which is used in the proof of Lemma \ref{Lemma:BiasVarianz}.
\begin{assumption}\label{AssKernel}
	Let $\Phi_k\in C^2(\R)$ be symmetric, $\Phi_k(t)\equiv1$ for all $t\in[-D,D],$ $0<D<1,$ $|\Phi_k(t)|\leq1$ and $\Phi_k(t)=0,\,|t|>1.$
\end{assumption}
In contrast to kernel-estimators in a classical non-parametric regression context, the kernel $K(\cdot;h)$, defined in \eqref{eq:asympkernel}, depends on the bandwidth $h$ and hence on the sample size via the factor $1/\Phi_{f_{\Delta}}(-t/h)$. For this reason, the asymptotic behavior of $K(\cdot;h)$ is determined by the properties of the Fourier transform of the error-density $f_{\Delta}$. The following assumption on $\Phi_{f_{\Delta}}$ is standard in the non-parametric deconvolution context \citep[see, e.g.,][]{kato2019uniform, schmidt2013multiscale} and will be relaxed in Section \ref{sec:extensions} below.
\begin{assumption}\label{AssConvergenceS}
	Assume that $\Phi_{ f_{\Delta}}(t)\neq0$ for all $t\in\R$ and that there exist constants $\beta>0$ and $0<c<C$, $0<C_S$ such that
	\begin{align}\label{S}
		c\langle t\rangle^{-\beta}\leq|\Phi_{ f_{\Delta}}(t)|\leq C \langle t\rangle^{-\beta}\quad\text{and}\quad\bigl|\Phi_{ f_{\Delta}}^{(1)}(t)\bigr|\leq C_S\langle t\rangle^{-\beta-1}. \tag{S}
	\end{align}
\end{assumption}

A standard example of a density that satisfies Assumption \ref{AssConvergenceS} is the Laplace density with parameter $a>0$,
\begin{align}\label{Laplace}
	f_{\Delta,0}(a;x)=\tfrac{a}{2}e^{-a|x|}\quad\text{with}\quad\Phi_{ f_{\Delta,0}}(a;t)=\langle t/a\rangle^{-2}.
\end{align}
In this case we find $\beta=2$, $C=a^2\vee1$, $c=a^2\wedge1$ and $C_S=2/a^2\vee 2a^2.$
%
	\begin{remark}
		Our asymptotic theory cannot accommodate the case of exponential decay of the Fourier transform of the density $f_{\Delta}$, as the asymptotic behaviour of the estimators in the supersmooth case and the ordinary smooth case differs drastically. While for the ordinary smooth case considered here $\hat g(x)$ and $\hat g(y)$ are asymptotically independent if $x\neq y$, convolution with a supersmooth distribution is no longer local and causes dependencies throughout the domain. This leads to different properties of the suprema $\sup_{x\in[0,1]}|\hat g(x;h)-\mathbb{E}[\hat g(x;h)]|, $ which play a crucial role in the construction of our confidence bands.   
		In particular, the asymptotics strongly depend on the exact decay of the characteristic function $\Phi_{f_{\Delta}}$ and needs a treatment on a case to case basis (more details on the latter issue can be found in  \cite{vEandG2008})). 
	\end{remark}

\subsection{Simultaneous inference}\label{sec:GaussAppr}
Our main goal is to derive a method to conduct uniform inference on the regression function $g$, which is based on a Gaussian approximation to the maximal deviation of $\hat g_n$ from $g$. We consider the usual decomposition of the difference $g(x)-\hat g_n(x;h)$ into deterministic and stochastic parts, that is
\begin{align*}
	g(x)-\hat g_n(x;h)=g(x)-\mathbb E[\hat g_n(x;h)]+\mathbb E[\hat g_n(x;h)]-\hat g_n(x;h),
\end{align*}
where
%
%
\begin{align}\label{start}
	\hat g_n(x;h) - \mathbb E[\hat g_n(x;h)]  = \frac{1}{na_nh} \sum\limits_{j=-n}^n \eta_j K\left(\frac{w_j-x}{h};h\right).
\end{align}
If the bias, the rate of convergence of which is given in Lemma \ref{Lemma:BiasVarianz} in Section \ref{sec:Aux}, is taken care of by choosing an undersmoothing bandwidth $h$, the stochastic term \eqref{start} in the above decomposition dominates.

Theorem \ref{GaussAppr} below is the basic ingredient for the construction of the confidence statements under Assumption \ref{AssConvergenceS}. It guarantees that the random sum \eqref{start} can be approximated by a distribution free Gaussian version, uniformly with respect to $x\in[0,1]$, that is, a weighted sum of independent, normally distributed random variables such that the required quantiles can be estimated from this approximation. In the following assumption conditions on the bandwidth and the design parameter are listed which will be needed for the theoretical results.
\begin{assumption}\label{AssBandwidth} ~\\
	\vspace{-0.6cm}	
	\begin{enumerate}
		\item[(i)]	$
		\ln(n)n^{\frac{2}{\mom}-1}/(a_nh)+\frac{h}{a_n}+\ln(n)^2h+\ln(n)a_n+\frac{1}{na_nh^{1+2\beta}}=o(1)$,
		\item[(ii)] $\sqrt{na_nh^{2m+2\beta}}+\sqrt{na_n^{2s+1}h^2}+1/\sqrt{na_nh^2} =o(1/\sqrt{\ln(n)})$.
	\end{enumerate}	
\end{assumption}
The following example is a short version of a lengthy discussion given in the Appendix, Section \ref{sec:hyperrole}. More details can be found there.	

	\begin{example}\label{ex:bw}
		In a typical setting, the conditions listed in Assumption \ref{AssBandwidth} are satisfied if $h$ is the rate optimal bandwidth of classical deconvolution problems. As an example, consider the case of a function $g\in\mathcal{W}^m(\R), m>5/2,$ of bounded support, $f_{\Delta}$ as in \eqref{Laplace} and $\mathbb{E}[\epsilon_1^4]\leq\infty$. Then $\beta=2$ and Assumption \ref{AssSmoothness} (iii) holds for any $s>0$ such that $a_n$ can be chosen of order $n^{-\varepsilon}$ for $\varepsilon$ arbitrarily small. The rate optimal bandwidth in the classical deconvolution problem is of order $n^{-{1/(2(m\beta))}}$ \citep{Fan91}. With the choices of $a_n=n^{-\varepsilon}$ and $h=n^{-{1/(2(m\beta))}}$, $\varepsilon$ sufficiently small and $s$ sufficiently large, Assumption \ref{AssBandwidth} (i) and (ii) reduce to the requirements $1/(na_nh^{1+2\beta})=o(1)$ and $na_nh^{2m+2\beta}\ln(n)=o(1)$, respectively. These are met for small $\varepsilon>0$ since $1/(na_nh^{1+2\beta})\geq n^{-1+\varepsilon+5/9}$ and $na_nh^{2m+2\beta}\ln(n)=\ln(n)n^{-\varepsilon}$.
\end{example}
The first term in Assumption \ref{AssBandwidth} (i) stems from the Gaussian approximation and becomes less restrictive if the number of existing moments of the errors $\varepsilon_i$ increases. The last term in (i) guarantees that the variance of the estimator tends to zero. The terms in between are only weak requirements and are needed for the estimation of certain integrals. Assumption \ref{AssBandwidth}  (ii) guarantees that the bias is negligible under Assumption \ref{AssConvergenceS}. The first term  guarantees undersmoothing, the second term stems from the fact that only observations from the finite grid $[-1/a_n, 1/a_n]$ are available, while the third term accounts for the discretization bias. It is no additional restriction if $\beta>1/2$. For a given interval $[a,b]$, recall that $\|f\| = \|f\|_{[a,b]}$ denotes the supremum norm of a bounded function on $[a,b]$.
\begin{theorem}\label{GaussAppr}
	Let Assumptions \ref{AssKernel} - \ref{AssConvergenceS} and \ref{AssBandwidth} (i) be satisfied. For some given interval $[a,b]$ of interest, let  $\hat{\nu}_n$ be a nonparametric estimator of the standard deviation in model \eqref{modeleta} such that $\hat \nu>\sigma/2$ and
	\begin{align}\label{eq:convnu}
		\mathbb{P}\left(\left\| \tfrac{1}{\hat\nu}-\tfrac{1}{\nu}\right\|_{[a,b]} 
		>\tfrac{n^{2/\mom}}{\sqrt{na_nh}}	\right)=o(1).
	\end{align}
	\begin{enumerate}
		\item There exists a sequence of independent standard normally distributed random variables $(Z_n)_{n\in\Z}$ such that for
		\begin{align}
			\mathbb{D}_n(x) & :=\frac{\sqrt{na_nh}h^{\beta}}{\hat{\nu}(x)}\bigl(\hat g_n(x;h)-\mathbb{E}[\hat g_n(x;h)]\bigr),\nonumber\\
			\mathbb{G}_{n}(x)& :=\frac{h^{\beta}}{\sqrt{na_nh}} \sum\limits_{j=-n}^n Z_{j} K\left(\tfrac{w_j-x}{h};h\right), \label{Gaussprozess}
		\end{align}
		we have that for all $\alpha\in(0,1)$
		%

		%
		\begin{equation}\label{eq:quantileapprox}
			\left| \mathbb{P}\big(\|\mathbb{D}_{n}\|\leq q_{ \|\mathbb{G}_{n}\|}(\alpha) \big)-\alpha\right|\leq r_{n,1},
		\end{equation}
		where $q_{ \|\mathbb{G}_{n}\|}(\alpha)$ is the $\alpha$-quantile of  $\|\mathbb{G}_{n}\|$ and for some constant $C>0$
		\begin{align*}
			r_{n,1}=\mathbb{P}\left(\left\| \tfrac{1}{\hat\nu}-\tfrac{1}{\nu}\right\| 
			>\tfrac{n^{2/\mom}}{\sqrt{na_nh}}	\right)+C\left(\tfrac{1}{n}+\tfrac{n^{2/\mom}\sqrt{\ln(n)^3})}{\sqrt{na_nh}}\right).
		\end{align*} 
		\item If, in addition, Assumption \ref{AssBandwidth} (ii) and Assumption \ref{AssSmoothness} are satisfied, $\mathbb{E}[\hat g_n(x;h)]$ in \eqref{Gaussprozess} can be replaced by $g(x)$ with an additional error term of order $r_{n,2}=\sqrt{na_nh^{2m+2\beta}}+\sqrt{na_n^{2s+1}h^2}+1/\sqrt{na_nh^2}$.
		
	\end{enumerate}				
\end{theorem}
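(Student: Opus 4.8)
The plan is to prove part 1 via the chain \emph{replacing $\hat\nu$ by $\nu$ $\to$ discretization $\to$ Gaussian coupling of a finite sum $\to$ anti-concentration}, while part 2 is a direct estimate of the studentized bias. By \eqref{start} write $\mathbb{D}_n(x)=\hat\nu(x)^{-1}A_n(x)$ with $A_n(x):=\frac{h^{\beta}}{\sqrt{na_nh}}\sum_{j=-n}^n\eta_jK\big(\tfrac{w_j-x}{h};h\big)$, so that $\mathbb{G}_n$ is, up to the weights treated below, the Gaussian analogue of $A_n$. First I would replace $\hat\nu$ by $\nu$: with $\mathbb{D}_n^{\nu}(x):=\nu(x)^{-1}A_n(x)$ one has $\|\mathbb{D}_n-\mathbb{D}_n^{\nu}\|\le\|\nu\|\,\|1/\hat\nu-1/\nu\|\,\|\mathbb{D}_n^{\nu}\|$, and since $\|\mathbb{D}_n^{\nu}\|=O_{\mathbb{P}}(\sqrt{\ln n})$ — a maximum over at most polynomially many roughly sub-exponential terms, using $\|K(\cdot;h)\|_\infty\asymp h^{-\beta}$ and the variance rate in Lemma \ref{Lemma:BiasVarianz} — this difference is controlled on the event in \eqref{eq:convnu} and negligible on the relevant scale by Assumption \ref{AssBandwidth}(i). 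Next I would pass from the supremum to a maximum over a grid $x_1,\dots,x_N\subset[a,b]$ of mesh $\asymp h/n$: since $\partial_xK(\tfrac{w_j-x}{h};h)=-h^{-1}K'(\tfrac{w_j-x}{h};h)$ with $\|K'(\cdot;h)\|_\infty\asymp h^{-\beta}$, both $\mathbb{D}_n^{\nu}$ and $\mathbb{G}_n$ are Lipschitz with constant $O_{\mathbb{P}}(h^{-1}\sqrt{\ln n})$, so the discretization error is $O_{\mathbb{P}}(1/n)$ (the $1/n$ term in $r_{n,1}$), while $N\asymp n/h$ keeps $\ln N\asymp\ln n$.

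The core is the Gaussian coupling of the vector $V=\big(\sum_j\eta_jK(\tfrac{w_j-x_k}{h};h)\big)_{k\le N}$. I would apply the high-dimensional Gaussian approximation / coupling inequalities of \cite{CheCheKat2014}: there exist i.i.d.\ standard normal variables $(Z_j)_j$ such that the centered Gaussian vector $\tilde V=\big(\sum_j\nu(w_j)Z_jK(\tfrac{w_j-x_k}{h};h)\big)_{k\le N}$ satisfies $\|V-\tilde V\|_\infty=O_{\mathbb{P}}\big(h^{-\beta}n^{2/\mom}\ln n\big)$, the factor $n^{2/\mom}$ and the logarithmic power arising because the summands $\eta_jK(\tfrac{w_j-x_k}{h};h)$ are bounded by $|\eta_j|\,\|K(\cdot;h)\|_\infty$ with the $\eta_j$ having only $\mom>2$ finite moments, while $\Var(V_k)\asymp na_nh^{1-2\beta}$. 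Dividing the $k$-th coordinate by $\nu(x_k)$ and multiplying by $h^{\beta}/\sqrt{na_nh}$ shows that, uniformly in $k$, $\mathbb{D}_n^{\nu}(x_k)$ is within $O_{\mathbb{P}}\big(n^{2/\mom}\ln n/\sqrt{na_nh}\big)$ of $\frac{h^{\beta}}{\sqrt{na_nh}}\sum_j\frac{\nu(w_j)}{\nu(x_k)}Z_jK(\tfrac{w_j-x_k}{h};h)$. To erase the weights $\nu(w_j)/\nu(x_k)$ and reach $\mathbb{G}_n$, I would bound the supremum of the difference Gaussian process: its pointwise variance is $\lesssim\frac{h^{2\beta}}{na_nh}\sum_j\big(\tfrac{\nu(w_j)}{\nu(x)}-1\big)^2K(\tfrac{w_j-x}{h};h)^2\lesssim h$, using Lipschitz continuity of $\nu$ (Lemma \ref{Le:g}) and $\int|u|K_*^2(u)\,du<\infty$, where $K(\cdot;h)\approx h^{-\beta}K_*$ for a fixed $L^2$ kernel $K_*$ of polynomial decay; hence that supremum is $O_{\mathbb{P}}(\sqrt{h\ln n})=o(1)$ by Assumption \ref{AssBandwidth}(i). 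The same kernel-concentration computation gives $c\le\Var(\mathbb{G}_n(x))\le C$ uniformly in $x$ and $n$ and $\mathbb{E}\|\mathbb{G}_n\|=O(\sqrt{\ln n})$.

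Finally I would convert these sup-norm approximations into the quantile statement \eqref{eq:quantileapprox}. By the anti-concentration inequality for suprema of Gaussian processes of \cite{CheCheKat2014}, $\sup_u\mathbb{P}\big(\big|\,\|\mathbb{G}_n\|-u\,\big|\le\epsilon\big)\le C\epsilon\big(\mathbb{E}\|\mathbb{G}_n\|+1\big)\lesssim\epsilon\sqrt{\ln n}$; combining this with the coupling of order $n^{2/\mom}\ln n/\sqrt{na_nh}$, the $O(1/n)$ discretization error and the $\hat\nu$-to-$\nu$ error yields $\sup_u\big|\mathbb{P}(\|\mathbb{D}_n\|\le u)-\mathbb{P}(\|\mathbb{G}_n\|\le u)\big|\lesssim\mathbb{P}\big(\|1/\hat\nu-1/\nu\|>n^{2/\mom}/\sqrt{na_nh}\big)+\tfrac1n+n^{2/\mom}\sqrt{\ln^3n}/\sqrt{na_nh}$; since anti-concentration also shows that $\|\mathbb{G}_n\|$ has a bounded density, its distribution function is continuous at $q_{\|\mathbb{G}_n\|}(\alpha)$ and evaluating at $u=q_{\|\mathbb{G}_n\|}(\alpha)$ gives \eqref{eq:quantileapprox} with the stated $r_{n,1}$. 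For part 2 the additional error is the studentized bias: since $\hat\nu>\sigma/2$,
\[
\Big\|\tfrac{\sqrt{na_nh}\,h^{\beta}}{\hat\nu}\big(\mathbb{E}[\hat g_n(\cdot;h)]-g\big)\Big\|\le\tfrac{2}{\sigma}\,\sqrt{na_nh}\,h^{\beta}\,\big\|\mathbb{E}[\hat g_n(\cdot;h)]-g\big\|,
\]
and inserting the three contributions to the bias from Lemma \ref{Lemma:BiasVarianz} — the smoothness term, the finite-grid truncation term and the discretization term — produces exactly $r_{n,2}=\sqrt{na_nh^{2m+2\beta}}+\sqrt{na_n^{2s+1}h^2}+1/\sqrt{na_nh^2}$, which Assumption \ref{AssBandwidth}(ii) forces to be $o(1/\sqrt{\ln n})$, i.e.\ negligible against the anti-concentration scale. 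I expect the coupling step to be the main obstacle: one must verify the regularity hypotheses of the Chernozhukov--Chetverikov--Kato inequalities for summands that are only $\mom$-integrable and carry the bandwidth-dependent, merely polynomially decaying deconvolution kernel $K(\cdot;h)$ — keeping precise track of the powers of $h$, $n^{2/\mom}$ and $\ln n$ — together with the attendant reconciliation of the $\nu(w_j)$-weighted and the unweighted Gaussian sums, which rests on the local regularity of $\nu$ and the integrability of the kernel tails.
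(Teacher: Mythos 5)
Your overall architecture (anti-concentration for $\|\mathbb{G}_n\|$, a coupling of $\mathbb{D}_n$ with a $\nu$-weighted Gaussian process, removal of the weights by Gaussian comparison, and the studentized bias for part 2) matches the paper, and your treatment of part 2 and of the final anti-concentration step is essentially the paper's. The genuine gap is in the core coupling step. You discretize to a grid of size $N\asymp n/h$ and then assert that the coupling inequalities of \cite{CheCheKat2014} produce i.i.d.\ normals $(Z_j)$ with $\|V-\tilde V\|_\infty=O_{\mathbb{P}}(h^{-\beta}n^{2/\mom}\ln n)$, i.e.\ a \emph{sup-norm coupling of the entire vector} at first-order rate $n^{2/\mom}\ln n/\sqrt{na_nh}$ after normalization. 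The cited reference is the anti-concentration paper and contains no such coupling; the Chernozhukov--Chetverikov--Kato Gaussian approximation results couple only the \emph{maxima} of the two vectors, and under a bare $\mom>2$ moment condition their rates carry fractional powers (fourth or eighth roots of the effective-sample-size ratio), which would not yield the stated $r_{n,1}$. The paper gets the sharp rate by a different mechanism that your plan does not contain: it orders the design points, performs an Abel (partial) summation to rewrite $\sum_j\xi_j\nu(w_j)K(\tfrac{w_j-x}{h};h)$ as a functional of the partial-sum process $S_j=\sum_{\ell\le j}\xi_\ell$ with total-variation weight $O(\ln n)$ (via \eqref{eq:int}), and then invokes Sakhanenko's one-dimensional strong approximation to get $\max_j|S_j-W(j)|=O_{\mathbb{P}}(n^{2/\mom})$ simultaneously for all $x$, with no discretization needed. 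Without this (or an equivalent KMT-type device) your coupling step, which you yourself flag as ``the main obstacle,'' does not go through at the claimed rate.

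A secondary, fixable issue: you replace $\hat\nu$ by $\nu$ via $\|\mathbb{D}_n-\mathbb{D}_n^{\nu}\|\le\|\nu\|\,\|1/\hat\nu-1/\nu\|\,\|\mathbb{D}_n^{\nu}\|$ and claim $\|\mathbb{D}_n^{\nu}\|=O_{\mathbb{P}}(\sqrt{\ln n})$ because it is a maximum of ``roughly sub-exponential'' terms. The summands are not sub-exponential under $\mathbb{E}|\epsilon_1|^{\mom}<\infty$ with $\mom>2$, so this bound is not available a priori. The paper sidesteps this by arranging the triangle inequality so that the variance-estimation error multiplies the \emph{Gaussian} process $\nu\mathbb{G}_{n,0}$, whose supremum is controlled by Dudley's entropy bound and Borell's inequality; the analogous bound for the raw process $\mathbb{D}_n^{\nu}$ only becomes available \emph{after} the coupling. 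Reordering your decomposition accordingly (or deferring this step until after the strong approximation) repairs this point, but the coupling itself remains the substantive missing ingredient.
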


\noindent  In particular, Theorem \ref{GaussAppr} implies that $\lim_{n \to \infty} \mathbb{P}\big(\|\mathbb{D}_{n}\|\leq q_{ \|\mathbb{G}_{n}\|}(\alpha) \big)= \alpha$ for all $ \alpha \in (0,1)$.  Regarding assumption \eqref{eq:convnu}, properties of variance estimators in a heteroscedastic non-parametric regression model are discussed in \citet{brown}. 

The following theorem is concerned with suitable grid widths of discrete grids $\mathcal{X}_{n,m}\subset[a,b]$  such that the maximum over $[a,b]$ and the maximum over $\mathcal{X}_{n,m}$ behave asymptotically equivalently.
\begin{theorem}\label{Thm:Grid}
	For some given interval $[a,b]$ of interest, let $\mathcal{X}_{n,m}\subset[a,b]$ a grid of points $a=x_{0,n}\leq x_{1,n}\leq\ldots\leq x_{m,n}=b$. Let $\|f\|_{\mathcal{X}_{n,m}}:=\max_{x\in\mathcal{X}_{n,m}}|f(x)|$. If the grid is sufficiently fine, i.e.,
	$$
	|\mathcal{X}_{n,m}|:=\max_{1\leq i\leq m}|x_{i,m}-x_{i-1,m}|\leq \frac{h^{1/2}}{na_n^{1/2}},
	$$ 
	then, under the assumptions of Theorem \ref{GaussAppr}, The following holds.
	\begin{enumerate}
		\item For all $\alpha\in(0,1)$
		\begin{equation}\label{eq:approxdiscrete}
			\left|\mathbb{P}\big(\|\mathbb{D}_{n}\|\leq q_{ \|\mathbb{G}_{n}\|_{\mathcal{X}_{n,m}}}(\alpha) \big)- \alpha\right|\leq r_{n,1}(1+o(1)).
		\end{equation}
		\item If, in addition, Assumption \ref{AssBandwidth} (ii) and Assumption \ref{AssSmoothness} are satisfied, $\mathbb{E}[\hat g_n(x;h)]$ in \eqref{eq:approxdiscrete} can be replaced by $g(x)$ with an additional error term of order $r_{n,2}=\sqrt{na_nh^{2m+2\beta}}+\sqrt{na_n^{2s+1}h^2}+1/\sqrt{na_nh^2}$.
	\end{enumerate}
\end{theorem}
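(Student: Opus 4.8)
Since $\mathcal{X}_{n,m}\subset[a,b]$ we automatically have $\|\mathbb{G}_{n}\|_{\mathcal{X}_{n,m}}\le\|\mathbb{G}_{n}\|$, so the whole content of the theorem is that replacing the supremum over $[a,b]$ by the maximum over the grid perturbs $\|\mathbb{G}_{n}\|$ by a negligible amount, which then has to be pushed through into the coverage statement. The plan is to exploit the smoothness of the paths of $\mathbb{G}_{n}$. Since $\Phi_k\in C^2$ has compact support while $1/\Phi_{f_\Delta}$ is differentiable and bounded on compacts by Assumption~\ref{AssConvergenceS}, the deconvolution kernel $K(\cdot;h)$ from \eqref{eq:asympkernel} is $C^\infty$, hence every realisation of $\mathbb{G}_{n}$ in \eqref{Gaussprozess} is a finite linear combination of smooth functions and is in particular continuously differentiable. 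Writing $x^\ast\in[a,b]$ for a maximiser of $|\mathbb{G}_{n}|$ and $x_i\in\mathcal{X}_{n,m}$ for a nearest grid point, the mean value theorem gives
\[
0\ \le\ \|\mathbb{G}_{n}\|-\|\mathbb{G}_{n}\|_{\mathcal{X}_{n,m}}\ \le\ \bigl|\mathbb{G}_{n}(x^\ast)-\mathbb{G}_{n}(x_i)\bigr|\ \le\ \|\mathbb{G}_{n}'\|\,|\mathcal{X}_{n,m}|\ \le\ \|\mathbb{G}_{n}'\|\,\tfrac{h^{1/2}}{na_n^{1/2}}\ =:\ A_n .
\]

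The step I expect to be the crux is a high-probability bound on $\|\mathbb{G}_{n}'\|$. Differentiating \eqref{Gaussprozess} gives $\mathbb{G}_{n}'(x)=-\tfrac{h^{\beta}}{h\sqrt{na_nh}}\sum_{j}Z_j\,K'\!\bigl(\tfrac{w_j-x}{h};h\bigr)$, where $K'(\cdot;h)$ denotes the derivative of $K(\cdot;h)$ in its first argument. Approximating $\sum_{j}K'(\tfrac{w_j-x}{h};h)^2$ by $na_nh\,\|K'(\cdot;h)\|_2^2$ (the Riemann-sum error being of lower order since $na_nh\to\infty$), invoking Plancherel and the bound $c\,\langle t/h\rangle^{\beta}\le|\Phi_{f_\Delta}(-t/h)|^{-1}\le C\,\langle t/h\rangle^{\beta}$ on $\mathrm{supp}\,\Phi_k$, one obtains $\sup_{x\in[a,b]}\Var\bigl(\mathbb{G}_{n}'(x)\bigr)\asymp h^{-2}$. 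A Gaussian maximal inequality — bounding the maximum over a sufficiently fine auxiliary grid with polynomially many points and controlling the interpolation remainder by a crude bound on the second derivative $\mathbb{G}_{n}''$ — then yields $\mathbb{E}\|\mathbb{G}_{n}'\|\lesssim h^{-1}\sqrt{\ln n}$ (up to a further logarithmic factor), and the Borell--TIS concentration inequality upgrades this to $\mathbb{P}\bigl(\|\mathbb{G}_{n}'\|>C\ln(n)/h\bigr)=o(r_{n,1})$. Hence $\mathbb{P}(A_n>\delta_n)=o(r_{n,1})$ with $\delta_n:=C\ln(n)/\bigl(h^{1/2}na_n^{1/2}\bigr)$; the prescribed mesh size $|\mathcal{X}_{n,m}|\le h^{1/2}/(na_n^{1/2})$ is precisely what makes $A_n\le\delta_n$ on the corresponding event, and one checks from $n\to\infty$ and Assumption~\ref{AssBandwidth} that $\delta_n\sqrt{\ln n}=o(r_{n,1})$.

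It remains to turn the bound on $A_n$ into \eqref{eq:approxdiscrete}. Set $q:=q_{\|\mathbb{G}_{n}\|_{\mathcal{X}_{n,m}}}(\alpha)$ and $\rho_n:=\mathbb{P}(A_n>\delta_n)$. Monotonicity of distribution functions gives $q\le q_{\|\mathbb{G}_{n}\|}(\alpha)$, so \eqref{eq:quantileapprox} yields $\mathbb{P}(\|\mathbb{D}_{n}\|\le q)\le\mathbb{P}\bigl(\|\mathbb{D}_{n}\|\le q_{\|\mathbb{G}_{n}\|}(\alpha)\bigr)\le\alpha+r_{n,1}$. For the reverse inequality, the sandwich $\|\mathbb{G}_{n}\|_{\mathcal{X}_{n,m}}\le\|\mathbb{G}_{n}\|\le\|\mathbb{G}_{n}\|_{\mathcal{X}_{n,m}}+A_n$ together with $\mathbb{P}(A_n>\delta_n)\le\rho_n$ gives $q\ge q_{\|\mathbb{G}_{n}\|}(\alpha-\rho_n)-\delta_n$; since suprema (and finite maxima) of non-degenerate Gaussian processes have atomless distributions, the anti-concentration inequality of \cite{CheCheKat2014}, $\sup_t\mathbb{P}\bigl(t<\|\mathbb{G}_{n}\|\le t+\delta_n\bigr)\lesssim\delta_n\bigl(1+\mathbb{E}\|\mathbb{G}_{n}\|\bigr)\lesssim\delta_n\sqrt{\ln n}=:\tau_n$ (using $\mathbb{E}\|\mathbb{G}_{n}\|\lesssim\sqrt{\ln n}$, as already needed in the proof of Theorem~\ref{GaussAppr}), further gives $q\ge q_{\|\mathbb{G}_{n}\|}(\alpha-\rho_n-\tau_n)$; then \eqref{eq:quantileapprox} applied at level $\alpha-\rho_n-\tau_n$ yields $\mathbb{P}(\|\mathbb{D}_{n}\|\le q)\ge\alpha-\rho_n-\tau_n-r_{n,1}$. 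Combining the two bounds, $\bigl|\mathbb{P}(\|\mathbb{D}_{n}\|\le q)-\alpha\bigr|\le r_{n,1}+\rho_n+\tau_n=r_{n,1}(1+o(1))$, which is part~(i).

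For part~(ii) I would add the deterministic shift $x\mapsto\tfrac{\sqrt{na_nh}\,h^\beta}{\hat\nu(x)}\bigl(\mathbb{E}[\hat g_n(x;h)]-g(x)\bigr)$, whose supremum norm over $[a,b]$ is of order $r_{n,2}$ by the bias rate of Lemma~\ref{Lemma:BiasVarianz} together with $\hat\nu>\sigma/2$, and run the same quantile comparison, now invoking the anti-concentration estimate on windows of length of order $r_{n,2}=o(1/\sqrt{\ln n})$; this mirrors part~2 of Theorem~\ref{GaussAppr}. The main obstacle throughout is the second step: the order $h^{-1}$ (up to logarithms) for $\|\mathbb{G}_{n}'\|$ rests on $h$-uniform $L^2$-bounds for the bandwidth-dependent deconvolution kernel and its derivatives and on the Riemann-sum comparisons between the finite design grid and the corresponding integrals — the ``estimation of certain integrals'' referred to after Assumption~\ref{AssBandwidth} — after which the stated mesh size $h^{1/2}/(na_n^{1/2})$ translates into a perturbation that is genuinely $o(r_{n,1})$ and is therefore absorbed into the factor $1+o(1)$.
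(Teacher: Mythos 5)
Your proposal is correct and follows the same overall skeleton as the paper's proof: the upper bound is the trivial monotonicity $q_{\|\mathbb{G}_{n}\|_{\mathcal{X}_{n,m}}}(\alpha)\leq q_{\|\mathbb{G}_{n}\|}(\alpha)$ combined with Theorem \ref{GaussAppr}, and the lower bound rests on the sandwich $\|\mathbb{G}_n\|_{\mathcal{X}_{n,m}}\leq\|\mathbb{G}_n\|\leq\|\mathbb{G}_n\|_{\mathcal{X}_{n,m}}+(\text{oscillation})$ followed by the anti-concentration bound of \cite{CheCheKat2014}. Where you genuinely diverge is in how the oscillation term is controlled. The paper never differentiates the process: it observes from \eqref{dist} that $|s-t|\leq|\mathcal{X}_{n,m}|$ forces $d_{\mathbb{G}_n}(s,t)\leq\mathcal{C}|\mathcal{X}_{n,m}|/(h^{3/2}a_n^{1/2})$, defines $\tau_n$ as the modulus of continuity of $\mathbb{G}_n$ over that intrinsic-metric radius, bounds $\mathbb{E}[\tau_n]\leq\mathcal{C}|\mathcal{X}_{n,m}|\sqrt{-\ln|\mathcal{X}_{n,m}|}/(h^{3/2}a_n^{1/2})$ by Dudley's entropy integral (Corollary 2.2.8 of \cite{VanWel1996}, recycling the covering-number bound of Lemma \ref{Le:Entropy}), checks that this is $o(\rho_n^2)$, and then needs only Markov's inequality, $\mathbb{P}(\tau_n\geq\rho_n)=o(\rho_n)$. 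You instead bound $\|\mathbb{G}_n\|-\|\mathbb{G}_n\|_{\mathcal{X}_{n,m}}\leq\|\mathbb{G}_n'\|\,|\mathcal{X}_{n,m}|$ and control $\|\mathbb{G}_n'\|$ by a pointwise variance computation ($\asymp h^{-2}$, which is correct), chaining, and Borell--TIS. This works and your arithmetic checks out (your $\delta_n\sqrt{\ln n}$ is indeed $o(r_{n,1})$ under Assumption \ref{AssBandwidth}), but it buys nothing over the paper's route and costs more: you must establish entropy or second-derivative bounds for the derivative process $\mathbb{G}_n'$, which the paper's Lemma \ref{Le:Entropy} does not supply and which you only sketch; the paper's argument needs no concentration at all because the first-moment bound on $\tau_n$ is already small enough relative to $\rho_n$. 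Your quantile-shifting endgame ($q\geq q_{\|\mathbb{G}_n\|}(\alpha-\rho_n-\tau_n)$, then apply \eqref{eq:quantileapprox} at the shifted level) is a valid, essentially equivalent reformulation of the paper's direct estimate of $\mathbb{P}\big(q-2\rho_n\leq\|\mathbb{G}_n\|_{\mathcal{X}_{n,m}}\leq q\big)$, and your treatment of part (ii) via the bias bound of Lemma \ref{Lemma:BiasVarianz} matches the paper's one-line reduction.
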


\subsection{Construction of the confidence sets and bandwidth choice}\label{sec:algorithms}

In this section we present an algorithm which can be used to construct  uniform confidence sets based on Theorem \ref{GaussAppr}. 
Let $\mathbb{G}_n(x)$ be the statistic defined in \eqref{Gaussprozess}.
In order to obtain quantiles that guarantee uniform coverage of a confidence band, generate $M$ times  $\|\mathbb{G}_n\|_{\mathcal{X}_{n,m}}$, where $|\mathcal{X}_{n,m}|=o(h^{3/2}a_n^{1/2}/\ln(n))$ (see Theorem \ref{Thm:Grid}), that is, calculate $\hat{\nu}_n(x)$ for $x\in\mathcal{X}_{n,m}$,  generate $M$ times $2n+1$ realizations of independent, standard normally distributed random variables $Z_{1,j},\ldots,Z_{2n+1,j},\;j=1,\ldots,M.$ Calculate 
$
\mathbf{M}_{n,j}:=\max_{x\in\mathcal{X}_{n,m}}|\mathbb{G}_{n,j}(x)|.
$
Estimate the $(1-\alpha)$-quantile of $\|\mathbb{G}_n\|$ from $\mathbf{M}_{n,1},\ldots, \mathbf{M}_{n,M}$ and denote the estimated quantile by $\hat{q}_{\|\mathbb{G}_n\|_{\mathcal{X}_{n,m}}}(1-\alpha).$
From Theorem \ref{GaussAppr} we obtain the confidence band

\begin{align}\label{eq:confband}
	\hat{g}_n(x;h) \, \pm \,  \hat{q}_{\|\mathbb{G}_n\|_{\mathcal{X}_{n,m}}}(1-\alpha)\frac{\hat{\nu}_n(x)}{\sqrt{na_n}h^{1/2+\beta}}, \qquad x\in[a,b].
\end{align}
	\begin{remark}	
		Given a suitable estimator for the variance $\nu^2$, Theorem \ref{GaussAppr} and Example \ref{ex:bw} imply that, typically, the coverage error of the above bands will be of order $n^{2/\mom}\sqrt{\ln(n)}/\sqrt{na_nh}+\sqrt{na_nh^{2m+2\beta}}$.  The first term is determined by the accuracy of the Gaussian approximation and will be negligible if the distribution of the errors $\epsilon_i$ possesses sufficiently many moments, while the second term is of order $\sqrt{a_n}$ if the optimal bandwidth of classical deconvolution problems is used. This shows that, in contrast to confidence bands based on asymptotic quantiles, the coverage error typically decays polynomially in $n$. 
	\end{remark} 
	\begin{remark}
		In nonparametric regression without errors-in-variables the widths of uniform confidence bands are of order $\sqrt{\ln(n)}/\sqrt{nh}$ \citep[see, e.g.,][]{neupol1998}. Our bands \eqref{eq:confband} are wider by the factor $1/(a_nh^{\beta})$ which is due to the ill-posedness ($\beta$) and the, possibly slow, decay of $\gamma$ (expressed in terms of $a_n$).
	\end{remark}		

	For the choice of the bandwidth, \citet{gine2010confidence} (see also \citep{CheCheKat2014}) convincingly demonstrated how to use Lepski's method to adapt to unknown smoothness when constructing confidence bands. 
	In our framework, choose an exponential grid of bandwidths $h_k = 2^{-k}$ for $k \in \{k_l, \ldots, k_u\}$, with $k_l, k_u \in \N$ being such that $2^{- k_u } \simeq 1/n$ and \linebreak $2^{-k_l} \simeq \big( (\log n) / (n a_n) \big)^{1/(\beta + \bar m)}$ and where $\bar m$ corresponds to the maximal degree of smoothness to which one intends to adapt. Then for a sufficiently large constant $C_L>0 $ choose the index $k$ according to 
	\begin{align*}
		\hat k & = \min\big\{ k \in \{k_l, \ldots, k_u\}  \mid \| \hat g(\cdot ; h_k) - \hat g(\cdot ; h_l)\| \leq \, C_L\, \Big(\frac{\log n}{n\, a_n\, h^{1+2\,\beta}_l } \Big)^{1/2}\\
		&\hspace{6cm} \forall\ k \leq l \leq k_u \big\},
	\end{align*}
	and choose an undersmoothing bandwidth according as $\hat h = h_{\hat k}/\log n$. A result in analogy to \citet{gine2010confidence} would imply that under an additional \textit{self-similarity} condition on the regression function $g$, using $\hat h$ in \eqref{eq:confband} produces confidence bands of width $\big(\log n / (n\, a_n)\big)^{\frac{m-1/2}{\beta + m}}\, (\log n)^{\beta + 1/2}$ if $g$ has smoothness $m$. 
	Technicalities in our setting would be even more involved due to the truncated exhaustive design involving the parameter $a_n$. Therefore, we refrain from going into the technical details. In the subsequent simulations we use a simplified bandwidth selection rule which, however, resembles the Lepski method. 

\section{Simulations}\label{sec:sim}

\begin{table}[h]
	\begin{tabular}{|c|c|c||c|c||c|c|}
		\hline
		& $n=100$ & $n=100$ & $n=750$ & $n=750$\\
		& $\sigma=\sigma_\delta=0.1$ & $\sigma=\sigma_\delta=0.05$ & $\sigma=\sigma_\delta=0.1$ & $\sigma=\sigma_\delta=0.05$\\
		\hline
		\hline
		$g_a$ & $0.25$ & $0.24$ & $0.21$ & $0.12$ \\
		\hline
		$g_b$ & $0.20$ & $0.22$ & $0.22$ & $0.11$ \\ 
		\hline
	\end{tabular}
	\caption{\label{table:bdw} Regularization parameter used in the subsequent simulations. See text for details on its selection.
	}
\end{table}


\begin{table}[h]
	\begin{tabular}{|c|c|c||c|c||c|c|}
		\hline
		& $n=100$ & $n=100$ & $n=750$ & $n=750$\\
		& $\sigma=\sigma_\delta=0.1$ & $\sigma=\sigma_\delta=0.05$ & $\sigma=\sigma_\delta=0.1$ & $\sigma=\sigma_\delta=0.05$\\
		\hline
		\hline
		$g_a$ & $5.8\%$ 
		& $7.2\%$ 
		& $5.1\%$ & 
		$5.6\%$ 
		\\
		\hline
		$g_b$ & $1.8\%$ 
		& $5.3\%$ & 
		$5.0\%$ 
		& $5.0\%$ \\ 
		\hline
		
	\end{tabular}
	\caption{\label{table:ok1} Simulated rejection probabilities for bootstrap confidence bands.}
\end{table}

\begin{table}[h]
	\begin{tabular}{|c|c|c||c|c||c|c|}
		\hline
		& $n=100$ & $n=100$ & $n=750$ & $n=750$\\
		& $\sigma=\sigma_\delta=0.1$ & $\sigma=\sigma_\delta=0.05$ & $\sigma=\sigma_\delta=0.1$ & $\sigma=\sigma_\delta=0.05$\\
		\hline
		\hline
		$g_a$  & $0.44$ & $0.16$ & $0.21$ & $0.14$ \\ 
		\hline
		$g_b$  & $0.86$ & $0.28$ & $0.24$ & $0.22$ \\
		\hline
		
	\end{tabular}
	\caption{\label{table:ok2} Average width of bootstrap confidence bands.}
\end{table}

\begin{figure}[ht]
	\begin{center}
		\includegraphics[scale=0.4]{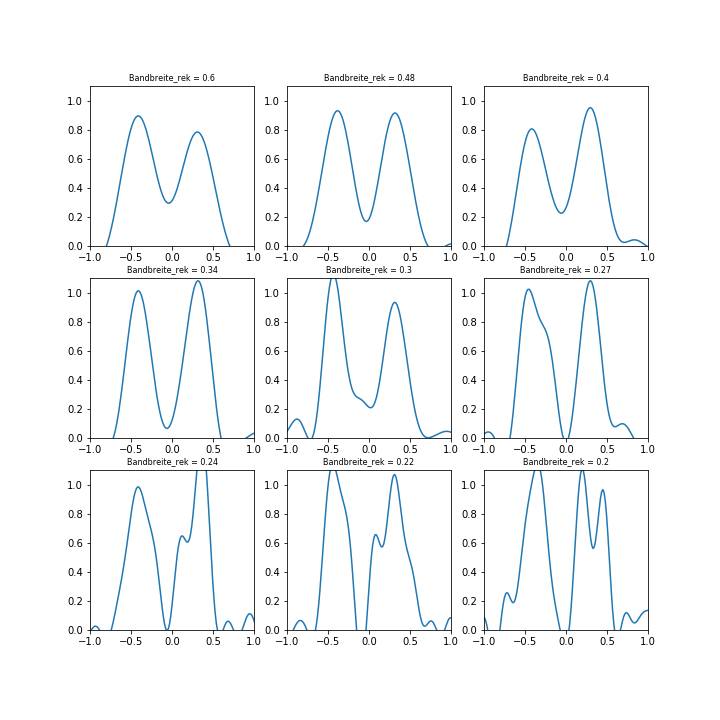}
	\end{center}
	\caption{\label{fig:estexamples} Sequence of estimates for increasing regularization parameter from a random sample of observations of signal $g_b$ with $n=100$ and $\sigma=\sigma_\delta=0.1$.}
\end{figure}

\begin{figure}[ht]
	\begin{center}
		\includegraphics[scale=0.4]{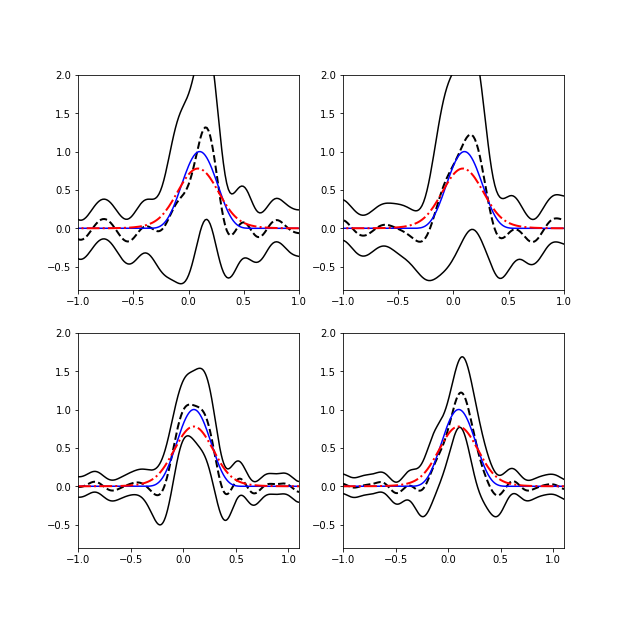}
	\end{center}
	\caption{\label{fig:kbga} True signal (solid line), observable signal (dash-dotted line) and estimates and associated confidence bands (dashed lines) from four random samples for $g_a$ for $n=100$ (top) and $n=750$ (bottom) and $\sigma=\sigma_\delta=0.1$.}
\end{figure}

\begin{figure}[ht]
	\begin{center}
		\includegraphics[scale=0.4]{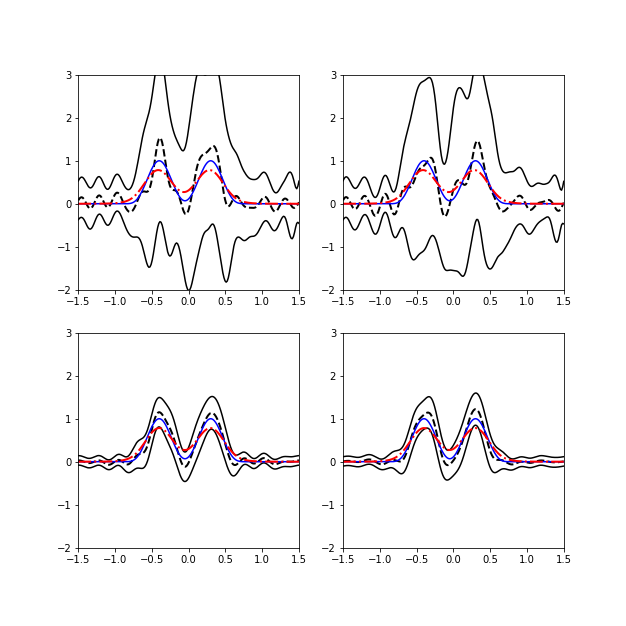}
	\end{center}
	\caption{\label{fig:kbgb} True signal (solid line), observable signal (dash-dotted line) and estimates and associated confidence bands (dashed lines) from four random samples  for $g_b$ for $n=100$ (top) and $n=750$ (bottom) and $\sigma=\sigma_\delta=0.1$.}
\end{figure}

%
In this section we investigate the numerical performance of our proposed methods in finite samples. We consider several different computational scenarios. As regression functions we consider
\begin{align*}
	g_a(x)=(1-4(x-0.1)^2)^5I_{[0,1]}(2|x-0.1|), 
\end{align*}
and
\begin{align*}
	g_b(x)=(1-4(x+0.4)^2)^5I_{[0,1]}(2|x+0.4|)+(1-4(x-0.3)^2)^5I_{[0,1]}(2|x-0.3|). 
\end{align*}
For the error distribution $f_{\Delta}$ we chose two densities of a Laplace distribution as defined in \eqref{Laplace}
with $a=\frac{0.1}{\sqrt{2}}$ and $a=\frac{0.05}{\sqrt{2}}$, i.e. standard deviations $\sigma_\delta=0.1$ and $\sigma_\delta=0.05$, respectively. Finally, 
$a_n=2/3$ in all simulations discussed below.
Our estimation is based on an application of the Fast Fourier transform implemented in python/scipy.
The integration used a damped version of a spectral cut off with cut-off function $I(\omega)=1-\exp(-\frac{1}{(\omega\cdot h)^{2}})$
in spectral space.\\
Construction of the confidence bands requires the selection of a regularization parameter for the estimator $\hat g$. In our simulations, we have chosen this parameter by
a visual inspection of a sequence of estimates for the regularization parameter, covering a range from over- to under-smoothing, see Figure \ref{fig:estexamples}.  We chose the minimal regularization parameter for which the estimates 
do not change systematically in overall amplitude, but appear to only exhibit additional random fluctuations 
at smaller values of the parameter. In the case shown here, we chose a regularization parameter of $0.27$. The same procedure was followed for 
other combinations of $n, \sigma, \sigma_\delta$ and signal  $g_a$ resp. $g_b$) and the results can be found
in Table \ref{table:bdw}. 
This regularization parameter was then kept fixed for each combination of $n,\sigma,\sigma_\delta$ and signal $g\in\{g_a,g_b\}$.
Figures \ref{fig:kbga} and \ref{fig:kbgb} show four random examples each for estimates of $g_a$ and $g_b$, respectively, together with the associated 
confidence bands from $250$ bootstrap simulations. Solid lines represent the true signal $g_a$ and $g_b$ and dashed lines the estimates $\hat g_n$ together with their 
associated confidence bands. Again, in both cases, $n=100$, $\sigma=0.1$ and $\sigma_\delta=0.1$.
Next, we discuss the practical performance of the bootstrap confidence bands in more detail for the first scenario, where the model is correctly specified and the errors 
in the predictors are taken into account as well. The results are shown in Tables \ref{table:ok1} and \ref{table:ok2} for the simulated rejection probabilities (one minus the coverage probability) at a nominal value of $5\%$ and for the (average) width of the confidence bands. 
In all cases, we performed simulations based on $500$ random samples of data and nominal rejection probability $5\%$ (i.e. confidence bands with nominal coverage
probability of $95\%$). For each of these data samples, we repeated $250$ times the following scenario. First, we determined the width of the confidence bands from $250$ bootstrap 
simulations and second, we evaluated whether the confidence bands cover the true signal everywhere in an interval of interest. The numbers shown in the table give the percentage of rejections, i.e. of 
where the confidence bands do not overlap the true signal everywhere in such an interval. Here, the intervals of interest are chosen as an interval where the respective signal 
is significantly different from $0$. The intention of this is that in many practical applications the data analyst is particularly interested in those parts of the signal. 
Here, we chose the interval $[-0.7,\,0.6]$ as 'interval of interest' for $g_a$ and $g_b$.
From the tables we conclude that the method performs well, particularly for $n=750$, where the confidence bands are substantially less wide.

\section{Extensions}\label{sec:extensions}
The following assumption is less restrictive than Assumption \ref{AssConvergenceS}, \eqref{S}. 
\begin{assumption}\label{AssConvergenceW}
	Assume that $\Phi_{ f_{\Delta}}(t)\neq0$ for all $t\in\R$ and that there exist constants $\beta>0$ and $0<c<C$, $0<C_W$ such that
	\begin{align}\label{W}
		c\langle t\rangle^{-\beta}\leq|\Phi_{ f_{\Delta}}(t)|\leq C \langle t\rangle^{-\beta}\quad\text{and}\quad\bigl|\Phi_{ f_{\Delta}}^{(1)}(t)\bigr|\leq C_W\langle t\rangle^{-\beta}.\tag{W}
	\end{align}
\end{assumption}

An example for a density that satisfies Assumption \ref{AssConvergenceW}	but not  Assumption \ref{AssConvergenceS} is given by the mixture
\begin{align}\label{eq:mixt}
	f_{\Delta,1}(1;x)=\frac{\lambda}{2}f_{\Delta,0}(1;x-\mu)+\frac{\lambda}{2}f_{\Delta,0}(1;x+\mu)+(1-\lambda)f_{\Delta,0}(1;x),
\end{align}
where $\lambda\in(0,1/2)$ and $\mu\neq0$, and $f_{\Delta,0}$ is the Laplace density defined in \eqref{Laplace}. We find
\begin{align*}
	\Phi_{f_{\Delta,1}}(t)=(1-\lambda+\lambda\cos(\mu t))\langle t\rangle^{-2},
\end{align*}
which yields $\beta=2$, $c=1-2\lambda$ and $C_W=\lambda\mu+4.$	
\medskip

%
Technically, Assumption 3, \eqref{S} allows for sharper estimates of the tails of the deconvolution kernel (\ref{eq:asympkernel}) than does Assumption 4, \eqref{W}, see Lemma \ref{tailK} in Section \ref{sec:Aux}.
In this case we have to proceed differently as the approximation via a distribution free process such as $\mathbb{G}_n$ can no longer be guaranteed and we can only find a suitable Gaussian approximation depending on the standard deviation $\nu$.

Roughly speaking, we approximate $\mathbb{D}_n(x)$ in (\ref{Gaussprozess}) by the process
{\small 
	\begin{align}\label{Gaussprozesstilde}
		\widetilde{\mathbb{G}}_{n}(x) & =\frac{\sqrt{na_nh^{1+2\beta}}}{h\, \tilde{\nu}_n(x)} \sum\limits_{j}\tilde\nu_n(\omega_j)\, Z_{j} \,  K\left(\frac{w_j-x}{h};h\right),
	\end{align}
}
for a variance estimator $\tilde\nu_n$ on growing intervals $|x| \leq n\, a_n \, (1-\delta)$ for some $\delta >0$. We then replace the quantiles involving $\mathbb{G}_{n}$ in (\ref{eq:quantileapprox}), (\ref{eq:approxdiscrete}) and in (\ref{eq:confband}) by (the conditional quantiles given the sample) of $\widetilde{\mathbb{G}}_{n}$. Our theoretical developments involve a sample splitting, hence are somewhat cumbersome so that details are deferred to the Appendix, Section \ref{sec:extensionsdetails}. 

We have also simulated a version of the bootstrap for the extended model. However, as simulations show, the results are clearly not as good as for the more restrictive assumptions on $f_{\Delta}$. We have used
\begin{align*}
	f_{\Delta,1}(x)=\frac{\lambda}{2}\cdot f_{\Delta,0}(a;x-0.3)+(1-\lambda)\cdot f_{\Delta,0}(a;x)+\frac{\lambda}{2}\cdot f_{\Delta,0}(a;x+0.3),
\end{align*}
with $f_{\Delta,0}$ again the Laplace density defined in \eqref{Laplace}, $a=0.05/\sqrt{2}$ and $\lambda=0.2$.
For the signal $g_a$ in Section \ref{sec:sim} we find confidence band widths of 
$0.686$ and $0.462$ for $n=100$ and $n=750$, respectively, at simulated coverage probabilities of $6.3\%$ and $4.5\%$ and bandwidths of $0.59$ and $0.32$, for $\sigma=\sigma_{\delta}=0.1$.
\section*{Acknowledgements}
	HH gratefully acknowledges financial support form the DFG, grant Ho 3260/5-1. NB acknowledges support by the Bundesministerium f\"ur Bildung und Forschung through the project ``MED4D: Dynamic medical imaging: Modeling and analysis of
	medical data for improved diagnosis, supervision and drug
	development''. KP gratefully acknowledges financial support by the DFG through subproject A07 of CRC 755.
%

\section{Auxiliary Lemmas}\label{sec:Aux}
The proofs for the results in this section are given in Section \ref{Sec:ProofsAux}. 
\subsection{Properties of the regression function $g$,  the variance function $\nu$ and the convolution kernel $K$}  \label{sub:propnu}
Assumption \ref{AssSmoothness} stated above is basically a smoothness assumption on the function $g$. In the following lemma we list the properties of $g$ that are frequently used throughout this paper and that are implied by Assumption \ref{AssSmoothness}. 
\begin{lemma}\label{Le:g}
	Let Assumption \ref{AssSmoothness} hold.
	\begin{itemize}
		\item[(i)] The function $g$ is twice continuously differentiable.
		\item[(ii)] The function $g$ has uniformly bounded derivatives: $\|g^{(j)}\|_\infty<\infty,\, j\leq2.$
	\end{itemize}
\end{lemma}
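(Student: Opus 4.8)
The plan is to read both claims off the Sobolev embedding $\mathcal{W}^m(\R)\hookrightarrow C^2_b(\R)$, which applies since $m>5/2=2+\tfrac12$; rather than invoke it as a black box I would give the short self-contained argument on the Fourier side. Recall that $g\in\mathcal{W}^m(\R)$ means $\int_\R\langle t\rangle^{2m}|\Phi_g(t)|^2\,dt=\|\Phi_g(\cdot)\langle\cdot\rangle^m\|_2^2<\infty$, and that with the Fourier normalization fixed in \eqref{eq:theestimator} one has $\Phi_{g^{(j)}}(t)=(-it)^j\Phi_g(t)$, so that controlling $\int_\R|t|^j|\Phi_g(t)|\,dt$ controls $g^{(j)}$ in sup-norm.

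First I would check that $t\mapsto|t|^j|\Phi_g(t)|$ is integrable for $j\in\{0,1,2\}$. Writing $|t|^j|\Phi_g(t)|=\big(|t|^j\langle t\rangle^{-m}\big)\big(\langle t\rangle^m|\Phi_g(t)|\big)$ and applying Cauchy--Schwarz gives
\[
\int_\R|t|^j|\Phi_g(t)|\,dt\ \le\ \Big(\int_\R|t|^{2j}\langle t\rangle^{-2m}\,dt\Big)^{1/2}\Big(\int_\R\langle t\rangle^{2m}|\Phi_g(t)|^2\,dt\Big)^{1/2};
\]
the second factor is finite by Assumption \ref{AssSmoothness}(i), and the first is finite exactly when $2m-2j>1$, i.e.\ $m>j+\tfrac12$, which for $j\le 2$ follows from $m>5/2$. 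In particular $\Phi_g\in L^1(\R)$.

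Next, since $\Phi_g\in L^1(\R)$, Fourier inversion identifies $g$ a.e.\ with the bounded continuous function $x\mapsto\frac1{2\pi}\int_\R e^{-itx}\Phi_g(t)\,dt$, which I would take as the canonical representative of $g$. Because $|t|^j|\Phi_g(t)|\in L^1(\R)$ for $j=1,2$ provides an $x$-uniform integrable majorant for $\partial_x^j\big(e^{-itx}\Phi_g(t)\big)$, differentiation under the integral sign is justified and yields $g^{(j)}(x)=\frac1{2\pi}\int_\R(-it)^je^{-itx}\Phi_g(t)\,dt$ for $j=0,1,2$; each such integral is continuous in $x$ by dominated convergence, so $g\in C^2(\R)$, and $\|g^{(j)}\|_\infty\le\frac1{2\pi}\int_\R|t|^j|\Phi_g(t)|\,dt<\infty$. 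This establishes (i) and (ii) simultaneously.

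I do not expect a genuine obstacle here: the statement is a textbook Sobolev embedding. The only points needing a little care are fixing the Fourier normalization consistently with \eqref{eq:theestimator}, arguing that the a priori merely Sobolev-class function $g$ possesses the claimed continuous representative, and verifying the hypotheses for differentiating under the integral sign — all of which are immediate consequences of the integrability bound in the first step. (The $L^r$-part of Assumption \ref{AssSmoothness}(i) plays no role in this lemma; it enters elsewhere in the paper.)
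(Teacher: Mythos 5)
Your proposal is correct and follows essentially the same route as the paper: the key step in both is the Cauchy--Schwarz bound $\int_\R|t|^j|\Phi_g(t)|\,dt\le\bigl\||t|^j\langle t\rangle^{-m}\bigr\|_2\,\bigl\|\langle t\rangle^m\Phi_g\bigr\|_2<\infty$ for $j\le 2$ (using $m>5/2$), combined with the $L^\infty$--$L^1$ Fourier estimate $\|g^{(j)}\|_\infty\le\tfrac1{2\pi}\|(\cdot)^j\Phi_g\|_1$. The only cosmetic difference is that the paper disposes of part (i) by citing Sobolev's lemma while you unpack it via Fourier inversion and differentiation under the integral sign, which is just a self-contained version of the same embedding.
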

Given Assumption \ref{AssSmoothness} (ii), the properties of the function $g$ given in Lemma \ref{Le:g} are transferred to the convolution $\gamma=g*f_{\Delta}.$ This is made precise in the following lemma.
\begin{lemma}\label{Le:gamma}
	Let Assumption \ref{AssSmoothness}  hold.
	\begin{itemize}
		\item[(i)] The function $\gamma=g*f_{\Delta}$ is twice continuously differentiable with derivatives  $\gamma^{(j)}=g^{(j)}*f_{\Delta}.$
		\item[(ii)] $\gamma\in\mathcal{W}^m(\R).$
		\item[(iii)] The function $\gamma$ has uniformly bounded derivatives: $\|\gamma^{(j)}\|_{\infty}<\infty, \, j\leq2.$
	\end{itemize}
\end{lemma}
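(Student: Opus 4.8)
The plan is to derive all three parts as direct consequences of Lemma \ref{Le:g}, the convolution structure $\gamma = g * f_\Delta$, and the standing smoothness/integrability in Assumption \ref{AssSmoothness}. For part (i), I would first observe that under Assumption \ref{AssSmoothness} (i), $g \in L^r(\R)$ for all $r \le \mom$ (in particular $g \in L^1 \cap L^\infty$ after using Lemma \ref{Le:g}), while $f_\Delta$ is a bounded density, hence in $L^1 \cap L^\infty$ by Assumption \ref{AssSmoothness} (ii). The convolution $g*f_\Delta$ is therefore well defined and bounded. To get differentiability I would put all derivatives on the $g$ factor: since $g' , g''$ are bounded and continuous (Lemma \ref{Le:g}) and $f_\Delta \in L^1$, standard differentiation-under-the-integral-sign arguments (dominated convergence, with the integrable dominating function $\|g^{(j)}\|_\infty\, f_\Delta(\cdot)$) give $\gamma^{(j)} = g^{(j)} * f_\Delta$ for $j \le 2$, and continuity of $\gamma^{(j)}$ follows from continuity of $g^{(j)}$ together with continuity of translation in $L^1$ applied to $f_\Delta$.

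For part (iii), I would simply use Young's inequality for convolutions: $\|\gamma^{(j)}\|_\infty = \|g^{(j)} * f_\Delta\|_\infty \le \|g^{(j)}\|_\infty \, \|f_\Delta\|_1 = \|g^{(j)}\|_\infty < \infty$ for $j \le 2$, invoking Lemma \ref{Le:g} (ii) and the fact that $f_\Delta$ is a density. For part (ii), I would pass to Fourier transforms: $\Phi_\gamma = \Phi_g \cdot \Phi_{f_\Delta}$, so that $\|\Phi_\gamma(\cdot)\langle\cdot\rangle^m\|_2 = \|\Phi_g(\cdot)\langle\cdot\rangle^m \,\Phi_{f_\Delta}(\cdot)\|_2 \le \|\Phi_{f_\Delta}\|_\infty \, \|\Phi_g(\cdot)\langle\cdot\rangle^m\|_2$. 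Since $f_\Delta \in L^1$, its Fourier transform is bounded (indeed $\|\Phi_{f_\Delta}\|_\infty \le \|f_\Delta\|_1 = 1$), and $\|\Phi_g(\cdot)\langle\cdot\rangle^m\|_2 < \infty$ because $g \in \mathcal{W}^m(\R)$ by Assumption \ref{AssSmoothness} (i). Hence $\gamma \in \mathcal{W}^m(\R)$.

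There is no real obstacle here; the lemma is a packaging of elementary facts. The only point requiring a little care is the justification of differentiating under the integral in part (i) — one must check that the difference quotients of $g(\cdot + \delta)$ are dominated uniformly in the increment by an $f_\Delta$-integrable function, which is exactly where the boundedness of $g''$ from Lemma \ref{Le:g} (ii) enters (via the mean value theorem applied to $g'$). Everything else is Young's inequality and the Plancherel identity.
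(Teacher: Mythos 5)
Your proposal is correct and follows essentially the same route as the paper: part (i) is the standard differentiation-of-convolutions result (the paper simply cites Proposition 8.10 of Folland rather than redoing the dominated-convergence argument), and part (ii) is the identical Fourier-side estimate using $\Phi_\gamma=\Phi_g\cdot\overline{\Phi}_{f_\Delta}$ with $\|\Phi_{f_\Delta}\|_\infty\leq 1$. The only cosmetic difference is in (iii), where you apply Young's inequality $\|g^{(j)}*f_\Delta\|_\infty\leq\|g^{(j)}\|_\infty\|f_\Delta\|_1$ directly, whereas the paper repeats the Hausdorff--Young bound from Lemma \ref{Le:g}(ii) on the Fourier side; both are one-line arguments resting on the same boundedness of $g^{(j)}$.
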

Furthermore, the variance function $\nu^2$, defined in \eqref{VarEta}, is a function that depends on $f_{\Delta},\gamma$ and $g.$ The following lemma lists the properties of $\nu^2$, which are implied by the previous Lemmas \ref{Le:g} and \ref{Le:gamma}, and that are frequently used throughout this paper.
\begin{lemma}\label{Le:nu}
	Let Assumption \ref{AssSmoothness}  hold.
	\begin{itemize}
		\item[(i)] The variance function $\nu^2$ is uniformly bounded and bounded away from zero.
		\item[(ii)] The variance function $\nu^2$ is twice continuously differentiable with uniformly bounded derivatives.
	\end{itemize}
\end{lemma}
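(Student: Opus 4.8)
The plan is to work from the explicit formula \eqref{VarEta} for $\nu^2$, after first simplifying the squared term. Since $\gamma(w)=\int g(w+\delta)\,f_\Delta(\delta)\,d\delta$, expanding the square and integrating against $f_\Delta$ yields the identity
\begin{align*}
	\nu^2(w) = \psi(w) - \gamma(w)^2 + \sigma^2, \qquad \psi(w) := \int g(w+\delta)^2\,f_\Delta(\delta)\,d\delta,
\end{align*}
so that $\psi$ is the convolution of $g^2$ with the density $f_\Delta(-\cdot)$, and $\gamma^2$ is the square of a function already controlled by Lemma~\ref{Le:gamma}. The whole statement then reduces to transferring boundedness and $C^2$-smoothness from $g$ (Lemma~\ref{Le:g}) and $\gamma$ (Lemma~\ref{Le:gamma}) through these two operations.

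For part (i): the lower bound $\nu^2(w)\geq\sigma^2>0$ is immediate from \eqref{VarEta} since the integrand is nonnegative, which already gives boundedness away from zero. For the upper bound I would bound $\psi(w)\leq\|g\|_\infty^2\int f_\Delta(\delta)\,d\delta=\|g\|_\infty^2$, using that $f_\Delta$ is a probability density and that $\|g\|_\infty<\infty$ by Lemma~\ref{Le:g}(ii) (the case $j=0$); discarding the nonpositive term $-\gamma(w)^2$ then yields $\nu^2(w)\leq\|g\|_\infty^2+\sigma^2$ uniformly in $w$.

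For part (ii): first note that $g^2$ is twice continuously differentiable with $(g^2)'=2gg'$ and $(g^2)''=2(g')^2+2gg''$, both uniformly bounded because $g,g',g''$ are, by Lemma~\ref{Le:g}. Exactly as in the proof of Lemma~\ref{Le:gamma}(i), convolving $g^2$ with the integrable density $f_\Delta$ permits differentiating under the integral sign twice --- justified by dominated convergence, the uniformly bounded derivatives of $g^2$ supplying the $f_\Delta$-integrable majorants --- so $\psi\in C^2$ with $\psi^{(j)}(w)=\int (g^2)^{(j)}(w+\delta)\,f_\Delta(\delta)\,d\delta$ and $\|\psi^{(j)}\|_\infty\leq\|(g^2)^{(j)}\|_\infty$ for $j\leq2$; continuity of $\psi^{(j)}$ follows from continuity of the integrand in $w$ and a further dominated-convergence argument. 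Since $\gamma$ is twice continuously differentiable with uniformly bounded derivatives by Lemma~\ref{Le:gamma}(i),(iii), the product rule gives the same for $\gamma^2$. Adding the constant $\sigma^2$, it follows that $\nu^2=\psi-\gamma^2+\sigma^2$ is twice continuously differentiable with uniformly bounded derivatives.

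The argument is essentially routine; the only point requiring care is the interchange of differentiation and integration defining $\psi^{(j)}$, but this uses precisely the mechanism already applied to $\gamma$ in Lemma~\ref{Le:gamma}, and poses no genuine difficulty once the uniform bounds on $g$ and its first two derivatives from Lemma~\ref{Le:g} are available.
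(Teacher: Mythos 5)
Your proof is correct and follows essentially the same route as the paper's: both arguments reduce the claim to the uniform boundedness and $C^2$-smoothness of $g$ and $\gamma$ from Lemmas \ref{Le:g} and \ref{Le:gamma}, obtain the lower bound $\nu^2\geq\sigma^2$ from nonnegativity of the integrand, and justify differentiating under the integral against the probability density $f_\Delta$ via the uniform derivative bounds. Your preliminary expansion $\nu^2=\psi-\gamma^2+\sigma^2$ is only a cosmetic reorganization (the paper differentiates $(g(z+\delta)-\gamma(z))^2 f_\Delta(\delta)$ directly) and incidentally gives a slightly sharper upper bound in part (i).
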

For the tails of the kernel, we have the following estimate. 
\begin{lemma}
	\label{tailK}
	For any $a>1$ and $x\in[0,1]$ we have
	{\small
		\begin{align*}
			\int_{\{|z|>a\}}\left(K\left(\frac{z-x}{h};h\right)\right)^2\,dz\leq \mathcal{C}\frac{2a}{a^2-x^2}\, \cdot \, \begin{cases}
				h^{-2\beta}, & \text{if Ass.~4, } \eqref{W}\text{ holds,}\\
				h^{-2\beta+2}, & \text{if Ass.~3, } \eqref{S} \text{ holds.}
			\end{cases}
		\end{align*}
	}
\end{lemma}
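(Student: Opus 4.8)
The plan is to reduce the bound on $\int_{\{|z|>a\}} K\big(\tfrac{z-x}{h};h\big)^2\, dz$ to a pointwise tail estimate on the deconvolution kernel $K(\cdot;h)$ of the form $|K(u;h)| \le \mathcal{C}\, h^{-\beta}\langle u\rangle^{-1}$ (under \eqref{W}) respectively $|K(u;h)| \le \mathcal{C}\, h^{-\beta+1}\langle u\rangle^{-1}$ (under \eqref{S}), and then integrate. First I would substitute $u = (z-x)/h$, so the integral becomes $h\int_{\{|hu+x|>a\}} K(u;h)^2\, du$; since $x\in[0,1]$ and $a>1$, the region $\{|hu+x|>a\}$ is contained in $\{|u| \ge (a-x)/h\}$, and on that region $\langle u\rangle^{-2} \le C\, u^{-2} \le C\, h^2/(a-x)^2$. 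Combining this with the pointwise bound $|K(u;h)|^2 \le \mathcal{C}\, h^{-2\beta(+2)}\langle u\rangle^{-2}$ and using $\int_{\{|u|\ge t\}} \langle u\rangle^{-2}\, du \le C/t$ gives a factor $\lesssim h/(a-x)$; bookkeeping the powers of $h$ and rewriting $1/((a-x)(a+x)) = 1/(a^2-x^2)$ up to a constant (using $a+x \le a+1 \le 2a$ and $a-x \ge a-1 > 0$, or more carefully splitting $\{z>a\}$ and $\{z<-a\}$) yields the stated $\tfrac{2a}{a^2-x^2}$ dependence.

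The substantive step, then, is the pointwise tail bound on $K(u;h) = \tfrac{1}{2\pi}\int_{\R} e^{-itu}\,\Phi_k(t)/\Phi_{f_\Delta}(-t/h)\, dt$. Since $\Phi_k \in C^2(\R)$ is supported in $[-1,1]$ (Assumption \ref{AssKernel}), the integrand $t \mapsto \Phi_k(t)/\Phi_{f_\Delta}(-t/h)$ is $C^2$ with compact support, so two integrations by parts give $|u^2 K(u;h)| \le \tfrac{1}{2\pi}\int_{-1}^1 \big|\tfrac{d^2}{dt^2}\big[\Phi_k(t)/\Phi_{f_\Delta}(-t/h)\big]\big|\, dt$, and the trivial bound gives $|K(u;h)| \le \tfrac{1}{2\pi}\int_{-1}^1 |\Phi_k(t)/\Phi_{f_\Delta}(-t/h)|\, dt$; together these produce $|K(u;h)| \lesssim \langle u\rangle^{-1}$ times the worst of the two integrals (one can also interpolate, or just take the minimum). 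It remains to estimate $\int_{-1}^1 \big|\tfrac{d^2}{dt^2}\big[\Phi_k(t)/\Phi_{f_\Delta}(-t/h)\big]\big|\, dt$ using the product rule: the three resulting terms involve $\Phi_k''$, $\Phi_k' \cdot \tfrac{d}{dt}(1/\Phi_{f_\Delta}(-t/h))$, and $\Phi_k \cdot \tfrac{d^2}{dt^2}(1/\Phi_{f_\Delta}(-t/h))$. On $|t|\le 1$ one has $|t/h| \le 1/h$, so $\langle t/h\rangle \asymp 1/h$ for small $h$; the lower bound $|\Phi_{f_\Delta}(s)| \ge c\langle s\rangle^{-\beta}$ gives $|1/\Phi_{f_\Delta}(-t/h)| \le C h^{-\beta}$, while $\tfrac{d}{dt}\,\Phi_{f_\Delta}(-t/h) = -h^{-1}\Phi_{f_\Delta}^{(1)}(-t/h)$ and the derivative bounds in \eqref{W} or \eqref{S} control $|\Phi_{f_\Delta}^{(1)}|$ by $C_W \langle t/h\rangle^{-\beta}$ respectively $C_S\langle t/h\rangle^{-\beta-1}$, i.e.\ by $\asymp h^{\beta}$ resp.\ $h^{\beta+1}$; hence $|\tfrac{d}{dt}(1/\Phi_{f_\Delta}(-t/h))| \le C h^{-\beta}$ under \eqref{W} but $\le C h^{-\beta+1}$ under \eqref{S}. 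This is precisely where the two cases separate and where the extra power $h^2$ in the conclusion comes from.

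The one technical gap in the sketch above is the bound on $\tfrac{d^2}{dt^2}(1/\Phi_{f_\Delta}(-t/h))$, which formally needs a bound on $\Phi_{f_\Delta}^{(2)}$ that is not among the stated assumptions. I expect this to be the main obstacle, and I would handle it in one of two ways: either (a) restrict to a single integration by parts, getting the weaker pointwise decay $|K(u;h)| \lesssim h^{-\beta}\langle u\rangle^{-1}$ (resp.\ $h^{-\beta+1}\langle u\rangle^{-1}$) directly from $|\tfrac{d}{dt}[\Phi_k(t)/\Phi_{f_\Delta}(-t/h)]|$, which already suffices for the $\langle u\rangle^{-1}$ rate claimed and only requires first derivatives of $\Phi_k$ and $\Phi_{f_\Delta}$ — both available; or (b) observe that $\Phi_{f_\Delta} = \Phi_{\gamma}/\Phi_g$-type smoothness, together with $f_\Delta$ being a square-integrable density (Assumption \ref{AssSmoothness}(ii)), controls second derivatives on compacta after the rescaling. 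Approach (a) is cleanest: a single integration by parts plus the endpoint estimate $|K(u;h)| \le \tfrac{1}{2\pi}\int_{-1}^1|\Phi_k(t)/\Phi_{f_\Delta}(-t/h)|\,dt \le C h^{-\beta}(\text{resp. }h^{-\beta+1})$ gives $|K(u;h)|^2 \le \mathcal{C}\, h^{-2\beta}(\text{resp. }h^{-2\beta+2})\,\langle u\rangle^{-2}$, and then the substitution-and-integration argument of the first paragraph finishes the proof.
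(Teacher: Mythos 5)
Your overall strategy is exactly the paper's: one integration by parts in the Fourier representation of $K(\cdot;h)$ to get a pointwise bound of the form $|K(u;h)|\leq \mathcal{C}_h/|u|$, then integrate the square over the tail $\{|z|>a\}$ (the paper does this directly in the $z$-variable, computing $\int_{|z|>a}(z-x)^{-2}dz=\tfrac{2a}{a^2-x^2}$, which is equivalent to your substitution). Your decision to avoid a second integration by parts because no bound on $\Phi_{f_\Delta}^{(2)}$ is assumed is also the right call and matches the paper.

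However, there is a concrete error in the powers of $h$ in your central estimate, and it is not merely cosmetic: the intermediate bounds you state are not provable, and if they were, your final answer would be too strong by a factor $h^2$ rather than matching the lemma. Differentiating $1/\Phi_{f_\Delta}(-t/h)$ produces $h^{-1}\Phi_{f_\Delta}^{(1)}(-t/h)/\Phi_{f_\Delta}(-t/h)^2$, which on $|t|\leq 1$ is of order $h^{-1}\cdot h^{\beta}\cdot h^{-2\beta}=h^{-\beta-1}$ under \eqref{W} and $h^{-1}\cdot h^{\beta+1}\cdot h^{-2\beta}=h^{-\beta}$ under \eqref{S}; you wrote $h^{-\beta}$ and $h^{-\beta+1}$, i.e.\ you lost the chain-rule factor $h^{-1}$ in the final tally. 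The correct conclusion of the single integration by parts is therefore $\sup_u|uK(u;h)|=O(h^{-\beta-1})$ under \eqref{W} and $O(h^{-\beta})$ under \eqref{S} (this is the paper's display \eqref{eq:supK}), so the usable pointwise bound is $|K(u;h)|\leq \mathcal{C}h^{-\beta-1}|u|^{-1}$ resp.\ $\mathcal{C}h^{-\beta}|u|^{-1}$, not $\mathcal{C}h^{-\beta}\langle u\rangle^{-1}$ resp.\ $\mathcal{C}h^{-\beta+1}\langle u\rangle^{-1}$. Two further small points: the ``endpoint'' bound $\int_{-1}^{1}|\Phi_k(t)|/|\Phi_{f_\Delta}(-t/h)|\,dt$ is $O(h^{-\beta})$ under \emph{both} assumptions (only the lower bound on $|\Phi_{f_\Delta}|$ enters, which is the same in \eqref{S} and \eqref{W}), so your ``resp.\ $h^{-\beta+1}$'' there is also wrong; and combining the endpoint bound $h^{-\beta}$ with the decay bound $h^{-\beta-1}|u|^{-1}$ gives $\min(h^{-\beta},h^{-\beta-1}|u|^{-1})$, which is \emph{not} dominated by $h^{-\beta}\langle u\rangle^{-1}$ (at $|u|\asymp 1/h$ the former is $h^{-\beta}$, the latter $h^{1-\beta}$). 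Once you replace your pointwise bounds by the correct ones, your own substitution computation $h\cdot h^{-2\beta-2}\cdot\big(\tfrac{h}{a-x}+\tfrac{h}{a+x}\big)=h^{-2\beta}\tfrac{2a}{a^2-x^2}$ (and the analogue under \eqref{S}) lands exactly on the lemma, and the argument coincides with the paper's proof.
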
	%
\begin{lemma}\label{Lemma:BiasVarianz}
	Let Assumptions \ref{AssSmoothness} and \ref{AssKernel} be satisfied.  Further assume that 
	$h/a_n\rightarrow0$  as $n\rightarrow\infty$. 
	
	\begin{itemize}
		\item[(i)] Then for the bias, we have that
		{\small
			\begin{align*}
				\sup_{x\in[0,1]}\bigl| \mathbb E \big[\hat g_n(x;h)\big]-g(x)\bigr|= O\left(h^{m-\frac{1}{2}}+\frac{1}{na_nh^{\beta+\frac{3}{2}}}\right)+\begin{cases}
					o\big(a_n^{s+1/2}h^{1-\beta}\big), & \text{ Ass.~3, (S),}\\
					o\big(a_n^{s+1/2}h^{-\beta}\big), & \text{Ass.~4, (W).}
				\end{cases}
			\end{align*}
		}
		\item[(ii) a)] For the variance if Assumption \ref{AssConvergenceW}, \eqref{W} holds and $na_nh^{1+\beta}\to\infty$, then we have that
		\begin{align*}
			\frac{\sigma^2 }{2C\pi}(1+O(a_n))\leq na_nh^{1+2\beta}{\rm Var}[\hat g_n(x;h)]&\leq \frac{2^{\beta}\sup_{x\in\R}\nu^2(x) }{c\pi}.
		\end{align*}		
		\item[(ii) b)]	If actually Assumption \ref{AssConvergenceS}, \eqref{S} holds  and $na_nh^{1+\beta}\to\infty$, then
		\begin{align*}
			\frac{\nu^2(x)  }{C\pi}(1+O(a_n))\leq na_nh^{1+2\beta}{\rm Var}[\hat g_n(x;h)]&\leq \frac{\nu^2(x) }{c\pi}(1+O\left(h/a_n\right)).
		\end{align*}
		Here $c,C$ and $\beta$ are the constants from  Assumption \ref{AssConvergenceW} respectively \ref{AssConvergenceS}.
	\end{itemize}
\end{lemma}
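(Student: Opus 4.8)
The plan is to handle the bias (i) and the variance (ii) separately, using in both cases: the identity $\mathbb{E}[\hat g_n(x;h)]=(na_nh)^{-1}\sum_{j=-n}^{n}\gamma(w_j)K\big(\tfrac{w_j-x}{h};h\big)$ together with independence of the $\eta_j$; the replacement of sums over the grid $\{w_j\}$ by integrals over $\R$, for which the crucial point is that $\Phi_k$ is compactly supported, so $K(\cdot;h)$ and $K(\cdot;h)^2$ are band-limited and Poisson summation eliminates all aliasing terms once $na_nh$ exceeds a fixed constant, the grid-truncation remainder $\sum_{|j|>n}$ being controlled by Lemma~\ref{tailK}; and a Fourier-analytic evaluation of the leading integral. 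The smoothness statements of Lemmas~\ref{Le:g}--\ref{Le:nu} make all estimates uniform in $x\in[0,1]$, and the standing hypothesis $h/a_n\to0$ is exactly what makes the discretization and truncation remainders negligible against the main terms.

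For the bias I would write
\[
\mathbb{E}[\hat g_n(x;h)]-g(x)=\underbrace{\tfrac1{2\pi}\!\int e^{-itx}(\Phi_k(ht)-1)\Phi_g(t)\,dt}_{T_1}+\underbrace{\Big(\tfrac1h\!\int_{|w|\le 1/a_n}-\tfrac1h\!\int_{\R}\Big)\gamma(w)K\big(\tfrac{w-x}{h};h\big)\,dw}_{T_2}+\underbrace{(\text{aliasing})}_{T_3},
\]
after checking that the full-line integral $\tfrac1h\int_{\R}\gamma(w)K\big(\tfrac{w-x}{h};h\big)dw$ equals $\tfrac1{2\pi}\int e^{-itx}\Phi_k(ht)\Phi_g(t)\,dt$. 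Then $T_1$ is bounded using $\Phi_k\equiv1$ on $[-D,D]$, Cauchy--Schwarz, and $g\in\mathcal{W}^m$, giving $O(h^{m-1/2})$. For $T_2$ I would exploit the decay encoded in Assumption~\ref{AssSmoothness}(iii) and pair it, via Cauchy--Schwarz, with the $L^2$-tail bound of Lemma~\ref{tailK} at $a=1/a_n$; since that bound carries $h^{-2\beta+2}$ under~\eqref{S} and $h^{-2\beta}$ under~\eqref{W}, the resulting product is $o(a_n^{s+1/2}h^{1-\beta})$, resp.\ $o(a_n^{s+1/2}h^{-\beta})$. For $T_3$ I would estimate the aliasing sum $na_n\sum_{k\ne0}\Phi_F(2\pi kna_n)$ with $F(w)=\tfrac1h\gamma(w)K\big(\tfrac{w-x}{h};h\big)$, using that $\Phi_{K(\cdot;h)}$ is supported in $[-1/h,1/h]$ and that $\Phi_\gamma=\Phi_g\cdot\Phi_{f_\Delta}(-\cdot)$ inherits weighted $L^2$-decay of order $m+\beta$ from $g\in\mathcal{W}^m$ and~\eqref{S}; together with the size of $1/\Phi_{f_\Delta}$ on $[-1/h,1/h]$ this yields the $O\big((na_nh^{\beta+3/2})^{-1}\big)$ term. (Equivalently one may replace the Poisson argument by an Euler--Maclaurin bound using $\|K'(\cdot;h)\|_1\lesssim h^{-\beta-1/2}$.)

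For the variance, independence gives ${\rm Var}[\hat g_n(x;h)]=(na_nh)^{-2}\sum_{j=-n}^{n}\nu^2(w_j)K\big(\tfrac{w_j-x}{h};h\big)^2$, and band-limitedness plus Poisson summation give the exact identity $\sum_{j\in\Z}K\big(\tfrac{w_j-x}{h};h\big)^2=na_nh\,\|K(\cdot;h)\|_2^2$ for $na_nh$ large, with the grid-truncation remainder controlled by Lemma~\ref{tailK} (this feeds the $O(a_n)$, resp.\ lower-order, corrections). Under Assumption~\ref{AssConvergenceS} the faster $h^{-2\beta+2}$ tail shows that $\nu^2(w)K\big(\tfrac{w-x}{h};h\big)^2$ concentrates near $w=x$ at scale $h$, so using $\nu^2\in C^2$ (Lemma~\ref{Le:nu}) one may replace $\nu^2(w_j)$ by $\nu^2(x)$ at relative cost $O(h/a_n)$, and everything reduces to evaluating $h^{2\beta}\|K(\cdot;h)\|_2^2=\tfrac1{2\pi}\int\Phi_k(t)^2\,(h^2+t^2)^{\beta}\,\big(|\Phi_{f_\Delta}(-t/h)|\langle t/h\rangle^{\beta}\big)^{-2}\,dt$, which by~\eqref{S} and dominated convergence is sandwiched between the displayed $c$- and $C$-dependent constants times $\nu^2(x)$. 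Under the weaker Assumption~\ref{AssConvergenceW} no such localization is available, so I would only use $\sigma^2\le\nu^2(w_j)\le\sup_x\nu^2(x)$ (from~\eqref{VarEta} and Lemma~\ref{Le:nu}), giving the one-sided bounds; the extra factors $2$ and $2^{\beta}$ come from replacing $\langle t/h\rangle$ by $|t|/h$ in~\eqref{W}. The hypothesis $na_nh^{1+\beta}\to\infty$ ensures the sums behave regularly.

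The main obstacle is part~(ii): obtaining the sharp leading constant --- and, under~\eqref{S}, genuine two-sided asymptotic equivalence rather than mere order bounds --- forces one to keep simultaneously quantitative the Poisson-summation identity, the grid-truncation remainder via Lemma~\ref{tailK} at $a=1/a_n$, the localization $\nu^2(w)\to\nu^2(x)$ (which works only under~\eqref{S}, precisely because only there does the deconvolution kernel decay fast enough), and the two-sided evaluation of $\int\Phi_k(t)^2|\Phi_{f_\Delta}(-t/h)|^{-2}dt$; reconciling these four error contributions is where the work concentrates. For the bias the analogous sticking point is $T_2$: the exhaustive, growing design means Assumption~\ref{AssSmoothness}(iii) cannot be avoided, and it must be combined with the kernel tail bound of Lemma~\ref{tailK} just so as to produce the $a_n^{s+1/2}$ power.
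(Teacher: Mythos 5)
Your proposal is correct, and its skeleton coincides with the paper's: the same three-way split of the bias into a smoothing error (Plancherel plus $\Phi_k\equiv 1$ on $[-D,D]$ and $g\in\mathcal W^m$, giving $O(h^{m-1/2})$), a domain-truncation error (Cauchy--Schwarz pairing Assumption \ref{AssSmoothness}(iii) with the tail bound of Lemma \ref{tailK} at $a=1/a_n$, producing the $a_n^{s+1/2}$ powers and the \eqref{S}/\eqref{W} dichotomy), and a discretization error; and the same treatment of the variance (crude bounds $\sigma^2\le\nu^2\le\sup\nu^2$ under \eqref{W}, localization $\nu^2(w_j)\to\nu^2(x)$ under \eqref{S} using $|wK(w;h)|\lesssim h^{-\beta}$, Plancherel for $\|K(\cdot;h)\|_2^2$). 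The one genuine point of departure is the discretization step: you invoke Poisson summation, exploiting that $\Phi_{K(\cdot;h)}$, and hence the Fourier transform of $K(\cdot;h)^2$, are supported in $[-1/h,1/h]$, resp.\ $[-2/h,2/h]$, so that once the non-band-limited factor ($\gamma$, resp.\ $\nu^2$) has been frozen or bounded out, the lattice sum with spacing $1/(na_n)$ has no aliasing for $na_nh$ large. The paper instead Taylor-expands: to second order for the bias, and --- its most technical passage --- to order $M\ge 2/\beta-1$ for $K^2$ in the variance, via the Leibniz rule and $L^1$ bounds on $(K^2)^{(l)}$. Your exact identity $\sum_{j\in\Z}K\bigl(\tfrac{w_j-x}{h};h\bigr)^2=na_nh\,\|K(\cdot;h)\|_2^2$ bypasses that expansion entirely (Poisson summation is cheap to justify here: $K^2$ is continuous, decays like $|w|^{-2}$ since $|wK(w;h)|\lesssim h^{-\beta}$, and has compactly supported Fourier transform), which is a real simplification; the price is only that the $\nu^2$- or $\gamma$-factor must be extracted first, which is exactly the localization or crude-bounding step you already perform. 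Two cosmetic remarks: the factor $2$ in $\sigma^2/(2C\pi)$ actually comes from the grid covering only $[-1/a_n,1/a_n]$, so that at least half of $\|K(\cdot;h)\|_2^2$ survives for large $n$ by Lemma \ref{tailK}, not from comparing $\langle t/h\rangle$ with $|t|/h$ (that comparison yields the $2^{\beta}$); and your aliasing estimate for the bias does need the decay of $\Phi_\gamma$ quantified as you indicate, since there the relevant Fourier transform is a convolution and is not compactly supported --- but with $m>5/2$ the resulting contribution is comfortably dominated by $O\bigl((na_nh^{\beta+3/2})^{-1}\bigr)$.
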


\subsection{Maxima of Gaussian processes}
Let  $\{\mathbb{X}_t\,|\,t\in T\}$ be a Gaussian process and $\rho$ be a semi-metric on $T$.  The packing number $D(T,\delta,\rho)$ is the maximum number of points in $T$ with distance $\rho$ strictly larger than $\delta>0$. Similarly to the packing numbers, the covering numbers $N(T,\delta,\rho)$ are defined as the number of closed $\rho$-balls of radius $\delta$, needed to cover $T$. Let further   $d_{\mathbb{X}}$ denote the standard deviation semi-metric on $T$, that is,
\begin{align*}
	d_{\mathbb{X}}(s,t)=\Bigl(\mathbb{E}\big[|\mathbb{X}_t-\mathbb{X}_s|^2\big]\Bigr)^{\frac{1}{2}}\quad\text{for}\quad s,t\in T.
\end{align*}
In the following, we drop the subscript if it is clear which process induces the pseudo-metric $d$.
\begin{lemma}\label{Le:Entropy} There exist  constants $C_E,C_{\widehat E}\in(0,\infty)$ such that
	\begin{itemize}
		\item[(i)] $\displaystyle N(T,\delta,d_{\mathbb{G}_n})\leq D(T,\delta,d_{\mathbb{G}_n})\leq\frac{C_E}{h^{3/2}a_n^{1/2}\delta}.$
		\item[(ii)] $\displaystyle N(T,\delta,d_{\mathbb{G}_n^{\widehat K}})\leq D(T,\delta,d_{\mathbb{G}_n^{\widehat K}})\leq\frac{C_{\widehat E}}{h^{3/2}a_n^{1/2}\delta},$ where $\mathbb{G}_n^{\widehat K}$ is defined as $\mathbb{G}_n$ with $K$ replaced by $\widehat K$, where $\widehat K(z;h)=zK(z;h)$.
	\end{itemize}
\end{lemma}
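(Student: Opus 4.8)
The plan is to estimate the standard-deviation semi-metric $d_{\mathbb{G}_n}$ induced by the Gaussian process $\mathbb{G}_n$ and then bound the packing number by a crude covering argument on the interval $T = [a,b]$. Since $\mathbb{G}_n(x) = \tfrac{h^\beta}{\sqrt{na_nh}}\sum_j Z_j K\bigl(\tfrac{w_j-x}{h};h\bigr)$ with i.i.d.\ standard normal $Z_j$, we have
\begin{align*}
	d_{\mathbb{G}_n}(x,y)^2 = \frac{h^{2\beta}}{na_nh}\sum_{j=-n}^n \Bigl(K\bigl(\tfrac{w_j-x}{h};h\bigr) - K\bigl(\tfrac{w_j-y}{h};h\bigr)\Bigr)^2.
\end{align*}
The first step is to pass from the finite sum to the integral $\tfrac{1}{na_n}\sum_j (\cdot) \approx \int (\cdot)\,dw$ using a Riemann-sum approximation (the grid spacing is $1/(na_n)$), which is controlled under Assumption \ref{AssBandwidth}; this reduces the problem to bounding $\tfrac{h^{2\beta}}{h}\int_{\R}\bigl(K(\tfrac{w-x}{h};h) - K(\tfrac{w-y}{h};h)\bigr)^2\,dw$. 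Substituting $u = (w-x)/h$ turns this into $h^{2\beta}\int_\R \bigl(K(u;h) - K(u - (y-x)/h;h)\bigr)^2\,du$.

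The second step is to control this shifted $L^2$-difference of the deconvolution kernel by $|x-y|/h$ times a constant. The cleanest route is via Fourier/Plancherel: since the Fourier transform of $K(\cdot;h)$ is $\Phi_k(t)/\Phi_{f_\Delta}(-t/h)$, supported on $[-1,1]$, a shift by $(y-x)/h$ multiplies it by $e^{-it(y-x)/h}$, so
\begin{align*}
	\int_\R \bigl(K(u;h) - K(u-(y-x)/h;h)\bigr)^2\,du = \frac{1}{2\pi}\int_{-1}^{1} \bigl|1 - e^{-it(y-x)/h}\bigr|^2 \frac{|\Phi_k(t)|^2}{|\Phi_{f_\Delta}(-t/h)|^2}\,dt.
\end{align*}
Using $|1 - e^{-it(y-x)/h}|^2 \leq t^2 (y-x)^2/h^2 \leq (y-x)^2/h^2$ on $[-1,1]$ together with the lower bound $|\Phi_{f_\Delta}(-t/h)| \geq c\langle t/h\rangle^{-\beta} \geq c\, h^\beta$ for $|t|\leq 1$ (from Assumption \ref{AssConvergenceS}/\ref{AssConvergenceW}, since $h<1$) gives $h^{2\beta}\int(\cdots)^2\,du \leq C' (x-y)^2/h^2$, hence $d_{\mathbb{G}_n}(x,y) \leq C'' |x-y|/h$ after accounting for the $\tfrac{1}{h}\cdot h^{2\beta}$ prefactor and the extra $h^{2\beta}$ — one must track the powers carefully, but the upshot is a bound of the form $d_{\mathbb{G}_n}(x,y) \leq \tilde C\, |x-y|/(h^{3/2} a_n^{1/2})$ once the Riemann-sum normalization $1/(na_n h)$ and the factor $h^{2\beta}$ are combined with the $L^2$ estimate. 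The third step is routine: if $d_{\mathbb{G}_n}(x,y) \leq \tilde C |x-y|/(h^{3/2}a_n^{1/2})$, then any two points at $d_{\mathbb{G}_n}$-distance $>\delta$ are at Euclidean distance $> h^{3/2}a_n^{1/2}\delta/\tilde C$, so $T=[a,b]$ of length $b-a$ can contain at most $(b-a)\tilde C/(h^{3/2}a_n^{1/2}\delta) + 1 \leq C_E/(h^{3/2}a_n^{1/2}\delta)$ such points (absorbing the $+1$ since $\delta$ is bounded by the diameter of $T$), which yields $D(T,\delta,d_{\mathbb{G}_n}) \leq C_E/(h^{3/2}a_n^{1/2}\delta)$; the inequality $N \leq D$ is the standard comparison of covering and packing numbers. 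Part (ii) is identical with $K$ replaced by $\widehat K(z;h) = zK(z;h)$: one checks that the Fourier transform of $\widehat K(\cdot;h)$ is $i\tfrac{d}{dt}\bigl(\Phi_k(t)/\Phi_{f_\Delta}(-t/h)\bigr)$, still supported on $[-1,1]$, and the same differentiation-under-the-integral plus the derivative bound $|\Phi_{f_\Delta}^{(1)}| \leq C_{S/W}\langle t\rangle^{-\beta(-1)}$ from Assumption \ref{AssConvergenceS} or \ref{AssConvergenceW} gives the analogous $L^2$ Lipschitz estimate with a possibly larger constant $C_{\widehat E}$.

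The main obstacle I anticipate is not the Fourier estimate itself but the bookkeeping of the powers of $h$ and the passage from the discrete sum to the integral: one has to confirm that the Riemann-sum error is genuinely negligible relative to the main term uniformly in $x,y\in[a,b]$ (this is where Assumption \ref{AssBandwidth} and the smoothness of $K(\cdot;h)$, itself inherited from $\Phi_k\in C^2$ and the bounds on $\Phi_{f_\Delta}$, enter), and that the resulting constant indeed scales as $1/(h^{3/2}a_n^{1/2})$ and not some other power. A secondary subtlety is that $K(\cdot;h)$ is real-valued but complex-looking through the Fourier representation; using symmetry of $\Phi_k$ and the fact that $\overline{\Phi_{f_\Delta}(-t/h)} = \Phi_{f_\Delta}(t/h)$ for a real density keeps everything real, so no issue arises, but it should be stated. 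Everything else is a direct application of Plancherel and the elementary inequality $|1-e^{i\theta}| \leq |\theta|$.
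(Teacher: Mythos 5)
Your proposal is correct in its overall architecture (Lipschitz bound on the intrinsic semi-metric in terms of the Euclidean distance, then a grid/packing argument, then $N\leq D$), but the key technical step is carried out by a genuinely different route than the paper's. The paper never passes to an integral: it bounds each summand by the mean value theorem, $|K(\tfrac{w_j-s}{h};h)-K(\tfrac{w_j-t}{h};h)|\leq \|K^{(1)}(\cdot;h)\|_\infty|s-t|/h$, sums the $2n+1$ identical bounds to pick up the factor $1/(a_nh)$, and controls $\|K^{(1)}(\cdot;h)\|_\infty$ by Hausdorff--Young via $\|t\,\Phi_k(t)/\Phi_{f_\Delta}(-t/h)\|_1=O(h^{-\beta})$; this immediately gives $d_{\mathbb{G}_n}(s,t)\leq C_E|s-t|/(a_n^{1/2}h^{3/2})$ with no Riemann-sum error to control, and part (ii) is handled the same way through $\|\widehat K^{(1)}(\cdot;h)\|_\infty\leq\|\cdot K^{(1)}(\cdot;h)\|_\infty+\|K(\cdot;h)\|_\infty=O(h^{-\beta})$. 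Your sum-to-integral plus Plancherel route also works (the Riemann error you worry about is of order $(s-t)^2/(na_nh^3)$, which is dominated by the main term since $1/(na_nh)=o(1)$ under Assumption~\ref{AssBandwidth}(i), and extending the sum's range to all of $\R$ only increases a nonnegative integrand), but note that if you actually carry the bookkeeping through, the $a_n$ cancels and you obtain the \emph{sharper} bound $d_{\mathbb{G}_n}(s,t)\leq C|s-t|/h$ rather than the $C|s-t|/(h^{3/2}a_n^{1/2})$ you assert; since $h^{3/2}a_n^{1/2}\leq h$ this still implies the stated entropy bound a fortiori, so there is no gap, only a mismatch between the constant your computation produces and the one you quote. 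The trade-off is clear: the paper's term-by-term bound is cruder (it loses a factor $(ha_n)^{-1/2}$ in the Lipschitz constant, which is harmless because only the logarithm of the entropy enters Dudley's bound) but requires no discretization-error analysis, whereas your route costs an extra approximation step in exchange for a sharper, $a_n$-free modulus.
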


\begin{lemma}\label{Le:Comparison}
	Let $(\mathbb{X}_{n,1}(t), t\in T)$ and $(\mathbb{X}_{n,2}(t), t\in T)$ be  almost surely bounded, centered Gaussian processes on a compact index set $T$ and suppose that for any fixed $n\in\N$ $\mathrm{diam}_{d_{\mathbb{X}_{n,1}}}(T)>D_n>0.$ If 
	\begin{align*}
		d_{\mathbb{X}_{n,1}}(s,t)\leq d_{\mathbb{X}_{n,2}}(s,t)\;\forall \,s,t\in T\quad\text{and}\quad\mathbb{E}\big[\|\mathbb{X}_{n,2}\|\big]=o(1/\sqrt{\ln(n)}),
	\end{align*}
	we have that
	\begin{align*}
		\mathbb{E}\left[\|\mathbb{X}_{n,1}\|\right]\leq 2\mathbb{E}\left[\|\mathbb{X}_{n,2}\|\right]\quad\text{and hence}\quad \|\mathbb{X}_{n,1}\|=o_{\mathbb{P}}(1/\sqrt{\ln(n)}).
	\end{align*}
\end{lemma}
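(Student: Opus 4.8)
The plan is to apply a Gaussian comparison inequality (Sudakov--Fernique type) to pass from the ordering of the standard-deviation semi-metrics to an ordering of the expected suprema, and then to upgrade the resulting bound on $\mathbb{E}[\|\mathbb{X}_{n,1}\|]$ to the stochastic statement $\|\mathbb{X}_{n,1}\| = o_{\mathbb{P}}(1/\sqrt{\ln(n)})$ via the Borell--TIS concentration inequality together with a bound on $\sup_{t\in T}\Var(\mathbb{X}_{n,1}(t))$. The key point that makes the constant $2$ (rather than just $\leq C$) appear is that under $d_{\mathbb{X}_{n,1}}(s,t)\leq d_{\mathbb{X}_{n,2}}(s,t)$ one does not merely get $\mathbb{E}[\|\mathbb{X}_{n,1}\|]\leq \mathbb{E}[\|\mathbb{X}_{n,2}\|]$ directly, because Sudakov--Fernique as usually stated compares $\mathbb{E}[\sup_t \mathbb{X}_{n,i}(t)]$, not $\mathbb{E}[\sup_t |\mathbb{X}_{n,i}(t)|]$; one bridges the gap by symmetrization (writing $\|\mathbb{X}_{n,i}\| = \sup_{t} \mathbb{X}_{n,i}(t) \vee \sup_t(-\mathbb{X}_{n,i}(t))$ and noting $\mathbb{E}\|\mathbb{X}\| \leq \mathbb{E}[\sup_t \mathbb{X}_t] + \mathbb{E}[\sup_t(-\mathbb{X}_t)] = 2\,\mathbb{E}[\sup_t \mathbb{X}_t]$ for centered $\mathbb{X}$, while conversely $\mathbb{E}[\sup_t \mathbb{X}_t] \leq \mathbb{E}\|\mathbb{X}\|$). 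Chaining these with Sudakov--Fernique applied on the index set $T$ (and noting that $-\mathbb{X}_{n,i}$ has the same covariance structure, so the semi-metric ordering is preserved) yields $\mathbb{E}[\|\mathbb{X}_{n,1}\|] \leq 2\,\mathbb{E}[\sup_t \mathbb{X}_{n,1}(t)] \leq 2\,\mathbb{E}[\sup_t \mathbb{X}_{n,2}(t)] \leq 2\,\mathbb{E}[\|\mathbb{X}_{n,2}\|]$.

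The second half of the argument is the passage to $o_{\mathbb{P}}$. First, since $d_{\mathbb{X}_{n,1}}(s,t) \leq d_{\mathbb{X}_{n,2}}(s,t)$ for all $s,t$, fixing an arbitrary reference point $t_0 \in T$ and a further point $t_1$ we have $\Var(\mathbb{X}_{n,1}(t_1))^{1/2} \leq \Var(\mathbb{X}_{n,1}(t_0))^{1/2} + d_{\mathbb{X}_{n,1}}(t_0,t_1)$; more usefully, one controls $\sup_t \Var(\mathbb{X}_{n,1}(t))^{1/2}$ by $\mathbb{E}[\|\mathbb{X}_{n,1}\|]$ up to an absolute constant (a centered almost surely bounded Gaussian process has $\sup_t \Var(\mathbb{X}_t)^{1/2} \leq C\,\mathbb{E}\|\mathbb{X}\|$, e.g.\ because $\mathbb{E}|\mathbb{X}_t| \leq \mathbb{E}\|\mathbb{X}\|$ and $\mathbb{E}|\mathbb{X}_t| = \sqrt{2/\pi}\,\Var(\mathbb{X}_t)^{1/2}$). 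Hence $\sigma_{n,1}^2 := \sup_t \Var(\mathbb{X}_{n,1}(t)) = O\big(\mathbb{E}[\|\mathbb{X}_{n,1}\|]^2\big) = o(1/\ln n)$ by the first part. Now apply Borell--TIS: for any $r>0$,
\begin{align*}
	\mathbb{P}\big(\|\mathbb{X}_{n,1}\| \geq \mathbb{E}[\|\mathbb{X}_{n,1}\|] + r\big) \leq \exp\!\big(-r^2/(2\sigma_{n,1}^2)\big).
\end{align*}
Choosing $r = \sigma_{n,1}\sqrt{\ln(\ln n)}$ (say) makes the right-hand side tend to $0$ while $\mathbb{E}[\|\mathbb{X}_{n,1}\|] + r = o(1/\sqrt{\ln n})$, which is exactly the statement $\|\mathbb{X}_{n,1}\| = o_{\mathbb{P}}(1/\sqrt{\ln n})$.

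The main obstacle I anticipate is bookkeeping around the hypotheses rather than any deep difficulty: one must make sure the Gaussian comparison inequality is invoked in a form valid for almost surely bounded (not necessarily separable or finite-index) processes on a compact $T$ — standard, but it requires either a separability remark or a reduction to finite subsets followed by a monotone limit — and one must verify that the condition $\mathrm{diam}_{d_{\mathbb{X}_{n,1}}}(T) > D_n > 0$ (which guarantees non-degeneracy, so that $\mathbb{E}\|\mathbb{X}_{n,1}\|>0$ and the comparison is not vacuous) is actually used only to legitimize the chaining/Sudakov step and the claim that the process is genuinely random. A secondary subtlety is confirming that the constant is exactly $2$ and not something larger: this hinges on the symmetrization identity $\mathbb{E}\|\mathbb{X}\| \leq 2\,\mathbb{E}[\sup_t \mathbb{X}_t]$ for centered Gaussian $\mathbb{X}$ and on $-\mathbb{X}_{n,i}$ inducing the same semi-metric, both of which are clean, so I expect the proof to be short once these pieces are assembled.
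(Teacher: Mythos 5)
Your proof follows essentially the same route as the paper's for the central inequality: symmetrize via $\|\mathbb{X}\|=\max\{\sup_t\mathbb{X}_t,\sup_t(-\mathbb{X}_t)\}$ to get $\mathbb{E}\|\mathbb{X}_{n,1}\|\leq 2\,\mathbb{E}[\sup_t\mathbb{X}_{n,1}(t)]$, then apply Sudakov--Fernique to compare with $\mathbb{X}_{n,2}$ and bound $\mathbb{E}[\sup_t\mathbb{X}_{n,2}(t)]\leq\mathbb{E}\|\mathbb{X}_{n,2}\|$. The paper additionally makes the role of the diameter condition explicit: it compares $(\mathbb{X}_{n,1}(t_1),\mathbb{X}_{n,1}(t_2))$ for two points with $d_{\mathbb{X}_{n,1}}(t_1,t_2)\geq D_n$ against two independent $\mathcal{N}(0,D_n^2/2)$ variables to conclude $\mathbb{E}[\sup_t\mathbb{X}_{n,1}(t)]>0$, which is what legitimizes the symmetrization bound (for a degenerate one-point process the inequality $\mathbb{E}\|\mathbb{X}\|\leq 2\,\mathbb{E}[\sup_t\mathbb{X}_t]$ is false); you correctly identify this as the purpose of $D_n$ but leave it at the level of a remark. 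The only substantive divergence is the last step: the paper passes from $\mathbb{E}\|\mathbb{X}_{n,1}\|=o(1/\sqrt{\ln n})$ to $\|\mathbb{X}_{n,1}\|=o_{\mathbb{P}}(1/\sqrt{\ln n})$ by Markov's inequality alone, which is all that is needed for a nonnegative variable with a controlled mean, whereas you invoke Borell--TIS plus the variance bound $\sup_t\mathrm{Var}(\mathbb{X}_{n,1}(t))^{1/2}\leq C\,\mathbb{E}\|\mathbb{X}_{n,1}\|$. That route is valid, but note that your specific threshold $r=\sigma_{n,1}\sqrt{\ln\ln n}$ only delivers $\|\mathbb{X}_{n,1}\|=o_{\mathbb{P}}\bigl(\sqrt{\ln\ln n/\ln n}\bigr)$, since $\sigma_{n,1}=o(1/\sqrt{\ln n})$ need not absorb the extra $\sqrt{\ln\ln n}$ factor; to get the stated rate you should fix $\delta>0$ and take $r=\delta/(2\sqrt{\ln n})$, for which Borell--TIS gives $\exp(-\delta^2/(8\ln n\,\sigma_{n,1}^2))\to 0$ --- or simply drop the concentration argument in favour of Markov.
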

%
\section{Proofs of Theorems \ref{GaussAppr} and \ref{Thm:Grid}}\label{Sec:Proofs}
%
%
In the following, the letter $\mathcal{C}$ denotes a generic, positive constant, whose value may vary form line to line. The abbreviations $R_n$ and $\widetilde{R}_n$, possibly with additional subscripts, are used to denote remainder terms and their definition may vary from proof to proof. \\~\\
\begin{proof}[Proof of Theorem \ref{GaussAppr}]
We first prove assertion (i). Let $\rho_n:=n^{2/\mom}\ln(n)/\sqrt{na_nh}$ and notice that
	\begin{align*}
		&\mathbb{P}\big(\|\mathbb{D}_{n}\|\leq q_{ \|\mathbb{G}_{n}\|}(\alpha) \big)\leq \mathbb{P}\big(\|\mathbb{G}_{n}\|\leq q_{ \|\mathbb{G}_{n}\|}(\alpha)+\rho_n \big)+\mathbb{P}\big(\big|\|\mathbb{D}_{n}\|-\|\mathbb{G}_{n}\|\big|>\rho_n \big)\\
		&~~\leq\alpha+\mathbb{P}\big(q_{ \|\mathbb{G}_{n}\|}(\alpha)\leq\|\mathbb{G}_{n}\|\leq q_{ \|\mathbb{G}_{n}\|}(\alpha)+\rho_n \big)+\mathbb{P}\big(\big|\|\mathbb{D}_{n}\|-\|\mathbb{G}_{n}\|\big|>\rho_n \big),
	\end{align*}
	since the distribution of $\|\mathbb{G}_n\|$ is absolutely continuous. Analogously, it holds
	\begin{align*}
		\mathbb{P}\big(\|\mathbb{D}_{n}\|\leq q_{ \|\mathbb{G}_{n}\|}(\alpha) \big)&\geq\alpha-
		\mathbb{P}\big(q_{ \|\mathbb{G}_{n}\|}(\alpha)-\rho_n\leq\|\mathbb{G}_{n}\|\leq q_{ \|\mathbb{G}_{n}\|}(\alpha) \big)\\
		&-\mathbb{P}\big(\big|\|\mathbb{D}_{n}\|-\|\mathbb{G}_{n}\|\big|>\rho_n \big),
	\end{align*}
	and therefore
	\begin{align*}
		\left|\mathbb{P}\big(\|\mathbb{D}_{n}\|\leq q_{ \|\mathbb{G}_{n}\|}(\alpha) \big)-\alpha\right|&\leq\sup_{x\in\mathbb{R}}\mathbb{P}\left(|\|\mathbb{G}_n\|-x|\leq\rho_n\right)+\mathbb{P}\left(\big|\|\mathbb{G}_n\|-\|\mathbb{D}_n\|\big|>\rho_n\right).
	\end{align*}
	The first term on the right hand side of the inequality is the concentration function of the random variable $\|\mathbb{G}_n\|$, which can be estimated by Theorem 2.1 of \cite{CheCheKat2014}. This gives
	\begin{align*}
		\left|\mathbb{P}\big(\|\mathbb{D}_{n}\|\leq q_{ \|\mathbb{G}_{n}\|}(\alpha) \big)-\alpha\right|&\leq4\rho_n\left(\mathbb{E}[\|\mathbb{G}_n\|]+1\right)+\mathbb{P}\left(\big|\|\mathbb{G}_n\|-\|\mathbb{D}_n\|\big|>\rho_n\right).
	\end{align*}

By Lemma \ref{Le:Entropy} we have 
	$
	N([0,1],\delta,d_{\mathbb{G}_n})\leq C_E/(h^{3/2}a_n^{1/2}\delta),
	$	
	which allows to estimate the expectation $\mathbb{E}[\|\mathbb{G}_n\|]$ as follows.
	\begin{align*}
		\mathbb{E}[\|\mathbb{G}_n\|]\leq\mathcal{C}\int_{0}^{\mathrm{diam}_{d_{\mathbb{G}_n}}([0,1])}\sqrt{\ln\left(\frac{C_E}{h^{3/2}a_n^{1/2}\delta}\right)}\,d\delta\leq\mathcal{C}\sqrt{\ln(n)}.
	\end{align*}
	This yields
	\begin{align*}
		\left|\mathbb{P}\big(\|\mathbb{D}_{n}\|\leq q_{ \|\mathbb{G}_{n}\|}(\alpha) \big)-\alpha\right|\leq \mathcal{C}\sqrt{\ln(n)}\rho_n +\mathbb{P}\left(\big|\|\mathbb{G}_n\|-\|\mathbb{D}_n\|\big|>\rho_n\right).
	\end{align*}
	We now estimate the term $\mathbb{P}\left(\big|\|\mathbb{G}_n\|-\|\mathbb{D}_n\|\big|>\rho_n\right)$ in several steps. With the definition
	\begin{align}\label{eq:firstapproxprocess}
		\mathbb{G}_{n,0}(x):=\frac{h^{\beta}}{\nu(x)\sqrt{na_nh}}\sum_{j=-n}^n\nu(w_j)Z_jK\biggl(\frac{w_j-x}{h};h\biggr),
	\end{align}
	we find
	\begin{align*}
		&\mathbb{P}\left(\big|\|\mathbb{G}_n\|-\|\mathbb{D}_n\|\big|>\rho_n\right)\leq \mathbb{P}\left(\|\mathbb{G}_n-\mathbb{D}_n\|>\rho_n\right)\\
		&~~\leq\mathbb{P}\left(\|\mathbb{G}_{n,0}-\mathbb{D}_n\|>\frac{\rho_n}{2}\right)+\mathbb{P}\left(\|\mathbb{G}_n-\mathbb{G}_{n,0}\|>\frac{\rho_n}{2}\right),
	\end{align*}
	and thus
	\begin{align*}  
		&\mathbb{P}\left(\big|\|\mathbb{G}_n\|-\|\mathbb{D}_n\|\big|>\rho_n\right)\leq\mathbb{P}\left(\|\nu\mathbb{G}_{n,0}-\nu_n\mathbb{D}_n\|>\tfrac{\sigma\rho_n}{8}\right)\\
		&+\mathbb{P}\left(\left\|\tfrac{1}{\nu}-\tfrac{1}{\hat\nu}\right\|\|\nu\mathbb{G}_{n,0}\|>\tfrac{\rho_n}{4}\right)+ \mathbb{P}\left(\|\mathbb{G}_n-\mathbb{G}_{n,0}\|>\tfrac{\rho_n}{2}\right)
		=:R_{n,1}+R_{n,2}+R_{n,3}.
	\end{align*}

Consider first term $R_{n,2}$. Let $\kappa>0$ be a constant and $n$ sufficiently large such that $\kappa/\sqrt{\ln(n)}<1$. Then
	\begin{align*}
		R_{n,2}\leq\mathbb{P}\left(\left\|\tfrac{1}{\nu}-\tfrac{1}{\hat\nu}\right\|>\kappa\tfrac{\rho_n}{4\sqrt{\ln(n)}}\right)+\mathbb{P}\left(\|\nu\mathbb{G}_{n,0}\|>\tfrac{\sqrt{\ln{n}}}{\kappa}\right)=:R_{n,2,1}+R_{n,2,2}.
	\end{align*}
	The term $R_{n,2,1}$ is controlled by assumption and the term $R_{n,2,2}$ can be estimated by Borell's inequality.
	To this end, denote by $d$ the pseudo distance induced by the process $\nu\mathbb{G}_{n,0}$. It holds that
	\begin{align*}
		\mathbb{E}\Big[\sup_{x\in[0,1]}\nu(x)\mathbb{G}_{n,0}(x)\Big]\leq \mathbb{E}[\|\nu\mathbb{G}_{n,0}\|]&\leq\mathcal{C}\int_{0}^{\mathrm{diam}([0,1])}\sqrt{\ln\left(N(\delta,[0,1],d)\right)}\,d\delta\\
		&\leq\mathcal{C}\int_{0}^{\mathrm{diam}([0,1])}\sqrt{\ln\Big(\tfrac{\mathcal{C}}{h^{\frac{3}{2}}a_n^{\frac{1}{2}}\delta}\Big)}\,d\delta,
	\end{align*}
	where the last estimate follows by an application of Lemma \ref{Le:Entropy}. By a change of variables,
	using that for any $a\leq1$
	\begin{align*}
		\frac{1}{a}\int_{0}^{a}\sqrt{-\ln(x)}\,dx\leq\sqrt{-\ln(a)}+\frac{1}{\sqrt{-2\ln(a)}}\leq\mathcal{C}\sqrt{-\ln(a)},
	\end{align*}
	we obtain
	\begin{align}\label{eq:exG0}
		\mathbb{E}\Big[\sup_{x\in[0,1]}\nu(x)\mathbb{G}_{n,0}(x)\Big]\leq \mathbb{E}[\|\nu\mathbb{G}_{n,0}\|]\leq\mathcal{C}\sqrt{\ln(n)}.
	\end{align}
	Next,
	\begin{align*}
		& R_{n,2,2}\leq2 \mathbb{P}\left(\sup_{x\in[0,1]}(\nu\mathbb{G}_{n,0})(x)>\tfrac{\sqrt{\ln{n}}}{2\kappa}\right)\\
		&~=\mathbb{P}\left(\sup_{x\in[0,1]}(\nu\mathbb{G}_{n,0})(x)-\mathbb{E}\big[\sup_{x\in[0,1]}(\nu\mathbb{G}_{n,0})(x)\big]>\tfrac{\sqrt{\ln{n}}}{2\kappa}-\mathbb{E}\big[\sup_{x\in[0,1]}(\nu\mathbb{G}_{n,0})(x)\big]\right)\\
		&~\leq\mathbb{P}\left(\sup_{x\in[0,1]}(\nu\mathbb{G}_{n,0})(x)-\mathbb{E}\big[\sup_{x\in[0,1]}(\nu\mathbb{G}_{n,0})(x)\big]>\tfrac{\sqrt{\ln{n}}}{4\kappa}\right),
	\end{align*}
	for sufficiently small $\kappa$ such that $\mathbb{E}\Big[\sup_{x\in[0,1]}\nu(x)\mathbb{G}_{n,0}(x)\Big]<\tfrac{\sqrt{\ln{n}}}{4\kappa}$. An application of Borell's inequality yields
	\begin{align*}
		R_{n,2,2}\leq\exp\left( \tfrac{\ln(n)}{32\kappa^2\sigma_{[0,1]}^2}\right),
	\end{align*}
	where $\sigma_{[0,1]}^2:=\sup_{x\in[0,1]}\mathrm{Var}[\nu(x)\mathbb{G}_{n,0}(x)]$ is a bounded quantity by Lemma \ref{Le:Entropy}. For sufficiently small $\kappa$, this yields the estimate
	\begin{align*}
		R_{n,2}\leq\mathbb{P}\left(\left\|\tfrac{1}{\nu}-\tfrac{1}{\hat\nu}\right\|>\kappa\tfrac{\rho_n}{4\sqrt{\ln(n)}}\right)+\frac{\mathcal{C}}{n}.
	\end{align*}

Next, we estimate the term $R_{n,1}$, i.e., we consider the approximation of $\mathbb{D}_{n}$ by a suitable Gaussian process.
	To this end, consider the standardized random variables $\xi_j:=\eta_j/\nu(w_j)$ and write
	\begin{align}\label{Yn0+}
		\hat{\nu}_n(x)&\mathbb{D}_n(x)=  \frac{h^{\beta}}{\sqrt{na_nh}}\sum\limits_{j=-n}^n \xi_j \nu(w_j)K\left(\frac{w_j-x}{h};h\right)\notag=\frac{h^{\beta}}{\sqrt{na_nh}}\biggl[\xi_0\nu(w_0)
		K\left(-\frac{x}{h};h\right)\\
		&+  \sum\limits_{j=1}^n \xi_j\nu(w_j) K\left(\frac{w_j-x}{h};h\right)+\sum\limits_{j=-n}^{-1} \xi_j \nu(w_j)K\left(\frac{w_j-x}{h};h\right)\biggr]\notag\\
		&~=:\mathbb{D}^0_{n}(x)+\mathbb{D}_{n}^+(x)+\mathbb{D}_{n}^{-}(x),
	\end{align}
	where the processes $\mathbb{D}_{n}^+(x),$ $\mathbb{D}_{n}^{-}(x)$ and $\mathbb{D}^0_{n}(x)$ are defined in an obvious manner. 
	Define the $j$-th partial sum $S_j:=\sum_{\nu=1}^{j}\xi_{\nu}$, set $S_0\equiv0$ and write
	\begin{align*}
		&\sum_{j=1}^n\xi_j\nu(w_j)K\left(\frac{w_j-x}{h};h\right) 
		%
		%
		=S_n\nu(w_n)K\left(\frac{w_n-x}{h};h\right)\\
		&-\sum_{j=0}^{n-1}S_{j}\biggl[
		\nu(w_{j+1})K\left(\frac{w_{j+1}-x}{h};h\right)-\nu(w_j)K\left(\frac{w_j-x}{h};h\right)
		\biggr]~\\
		&=S_n\nu(w_n) K\left(\frac{w_n-x}{h};h\right)
		-\sum_{j=1}^{n-1}S_{j}\int_{[w_j,w_{j+1}]}\frac{d}{dz}\Bigl(\nu(z)K\left(\frac{z-x}{h};h\right)\Bigr)\,dz. 
	\end{align*}
	By assumption, there exists a constant  $\mom>2$ such that $\mathbb{E}[|\epsilon_1|^{\mom}]<\infty$. By Lemma \ref{Le:gamma}, $\gamma$ is uniformly bounded, which implies  $\mathbb{E}[|\eta_j|^{\mom}]\leq M$ for some $M>0$ and all $j.$ 
	By Corollary 4, §5 in \cite{Sakhanenko}  there exist iid standard normally distributed random variables $Z_1,\ldots,Z_n$ such that, for $W(j):=\sum_{j=1}^nZ_j$ the following estimate holds for any positive constant $\mathcal{C}$:
	\begin{align}\label{eq:Sakhanenko}
		\mathbb{P}\left(\max_{1\leq j\leq n}|S_j-W(j)|>\tfrac{n^{2/\mom}}{2\mathcal{C}}\right)
		\leq \sum_{j=1}^{n}\mathbb{E}[|\xi_j|^{\mom}]\left(\frac{\mathcal{C}}{n^{2/\mom}}\right)^{\mom}+\mathbb{P}\left(\max_{1\leq j\leq n}\xi_j>\frac{n^{2/\mom}}{\mathcal{C}}\right).
	\end{align}
	Therefore,
	\begin{align*}
		\|\hat\nu_n\mathbb{D}_n^+-\nu\mathbb{G}_{n,0}^+\|&\leq\frac{\max_{1\leq j\leq n}|S_j-W(j)|}{\sigma\sqrt{na_nh}}\biggl[\sup_{x\in[0,1]}\nu(w_n)h^{\beta}\left|K\left(\frac{w_n-x}{h};h\right)\right|\\
		&+\sup_{x\in[0,1]}\int_{[w_1,w_{n}]}\left|\frac{d}{dz}\left(h^{\beta}\nu(z)K\left(\frac{z-x}{h};h\right)\right)\right|\,dz\biggr],
	\end{align*}
	where $\mathbb{G}_{n,0}^+$ is defined in analogy to $\mathbb{D}_{n}^+$ in \eqref{Yn0+}, with $\xi_j$ replaced by $Z_j$.
	For n sufficiently large, we have $a_n<1/2$ and thus, for $x\in[0,1]$ we have that $(w_n-x)/h\in[1/(2a_nh),1/a_nh]$ and thus
	\begin{align*}
		&\nu(w_n)h^{\beta}\left|K\left(\frac{w_n-x}{h};h\right)\right|\leq
		\nu(w_n)h^{\beta}\sup_{u>1/(2a_nh)}\left|K\left(u;h\right)\right|\\
		&\leq\nu(w_n)h^{\beta}\sup_{u>1/(2a_nh)}\left|2a_nhuK\left(u;h\right)\right|\leq2a_nh^{\beta+1}\nu(w_n)\|\cdot K\left(\cdot;h\right)\|_{\infty}\leq\mathcal{C}a_nh,
		%
	\end{align*}
	by \eqref{eq:supK}.
	Next,
	\begin{align*}
		&\sup_{x\in[0,1]}\int_{[w_1,w_{n}]}\left|\frac{d}{dz}\left(h^{\beta}\nu(z)K\left(\frac{z-x}{h};h\right)\right)\right|\,dz\\
		&\leq\mathcal{C}h^{\beta}\sup_{x\in[0,1]}\int_{[w_1,w_{n}]}\left|K\left(\frac{z-x}{h};h\right)\right|+\frac{1}{h}\left|K'\left(\frac{z-x}{h};h\right)\right|\,dz\\
		&\leq\mathcal{C}h^{\beta}\sup_{x\in[0,1]}\int_{-\frac{1}{h}}^{\frac{1}{a_nh}}h\left|K\left(u;h\right)\right|+\left|K'\left(u;h\right)\right|\,dz\leq\mathcal{C}\ln(n)
	\end{align*}
	by \eqref{eq:int} in the appendix. This yields
	\begin{align*}
		\|\hat\nu_n\mathbb{D}_n^+-\nu\mathbb{G}_n^+\|\leq\|\hat\nu_n\mathbb{D}_n-\nu\mathbb{G}_n\|\leq\mathcal{C}\ln(n)\,\frac{\max_{1\leq j\leq n}|S_j-W(j)|}{\sqrt{na_nh}}.
	\end{align*}
	Hence,
	\begin{align*}
		R_{n,1}&\leq\mathbb{P}\left(\mathcal{C}\ln(n)\frac{\max_{1\leq j\leq n}|S_j-W(j)|}{\sqrt{na_nh}}>\frac{\rho_n}{2}\right)\\
		&\leq\mathbb{P}\left(\max_{1\leq j\leq n}|S_j-W(j)|>\tfrac{n^{2/\mom}}{2\mathcal{C}}\right).
	\end{align*}
	Since $0<\sigma<\nu(w_j),$ 
	$
	\mathbb{E}[|\xi_j|^{\mom}]\leq M/\sigma^{\mom}$ for all $ 1\leq j\leq n,
	$ 
	we have $R_{n,2}\leq\mathcal{C}/n$ by \eqref{eq:Sakhanenko}.

Last, we need to estimate the term $R_{n,3}$. We have
	\begin{align*}
		R_{n,3}=\mathbb{P}\left(\|\mathbb{G}_n-\mathbb{G}_{n,0}\|>\tfrac{\rho_n}{4}\right)\leq \mathbb{P}\left(\|\widetilde R_n\|>\tfrac{\rho_n}{4}\right),
	\end{align*}
	where
	\begin{align*}
		\widetilde R_n(x):=\frac{h^{\beta}}{\sqrt{na_nh}}\sum_{j=-n}^nZ_j(\nu(w_j)-\nu(x))K\left(\frac{w_j-x}{h};h\right).
	\end{align*}
	Using that by Lemma \ref{Le:nu} $|\nu(w_j)-\nu(x)|\leq\mathcal{C}|w_j-x|=h\mathcal{C}|w_j-x|/h$, we find that
	\begin{align*}
		N\left([0,1],\delta,d_{\widetilde R_n}\right)\leq\frac{\mathcal{C}}{\sqrt{a_nh}\delta}.
	\end{align*}
	Furthermore, there exist positive constants $\widehat c$ and $\widehat C$ such that
	\begin{align*}
		\widehat c\frac{h^2}{na_nh}\leq\sup_{x\in[0,1]}\mathrm{Var}\left[\widetilde R_n(x)\right]\leq \widehat C \frac{h^2}{na_nh}.
	\end{align*}
	By Theorem 4.1.2 in \cite{adltay2007}, there exists a universal constant $K$ such that, for all $u>2\sqrt{\widehat C \frac{h^2}{na_nh}}$,
	\begin{align*}
		\mathbb{P}\left(\sup_{x\in[0,1]}\widetilde R(x)\geq u\right)\leq\frac{Kun\sqrt{a_nh}}{\widehat c h^2}\cdot\Psi\left(\tfrac{u}{\sqrt{\widehat C \frac{h^2}{na_nh}}}\right),
	\end{align*}
	where $\Psi$ denotes the tail function of the standard normal distribution. Setting $u=\rho_n/8$ yields, for sufficiently large $n$,
	\begin{align*}
		&\mathbb{P}\left(\sup_{x\in[0,1]}\widetilde R(x)\geq \frac{\rho_n}{8}\right)\leq\frac{Kun\sqrt{a_nh}}{\widehat c h^2}\cdot\Psi\left(\tfrac{u}{\sqrt{\widehat C \frac{h^2}{na_nh}}}\right)\\
		&\leq\frac{Kn^{\frac{1}{2}+\frac{2}{\mom}}\ln(n)}{8\widehat ch^2}\cdot\Psi\left(\tfrac{\ln(n)n^{\frac{2}{\mom}}}{8h\sqrt{\widehat C}}\right)\leq\mathcal{C}n\exp(-n^{4/\mom}/h)\leq\frac{\mathcal{C}}{n}.
	\end{align*}
	Therefore,
	$R_{n,3}\leq\mathcal{C}/n$,
	which concludes the proof of assertion (i).

Assertion (ii) is again an immediate consequence of Lemma \ref{Lemma:BiasVarianz}.
\end{proof}

\begin{proof}[Proof of Theorem \ref{Thm:Grid}]
	On the one hand,
	\begin{align*}
		\mathbb{P}\left(\|\mathbb{D}_n\|\leq q_{\|\mathbb{G}_n\|_{\mathcal{X}_{n,m}}}(\alpha)\right)\leq\mathbb{P}\left(\|\mathbb{D}_n\|\leq q_{\|\mathbb{G}_n\|}(\alpha)\right)\leq \alpha+r_{n,1},
	\end{align*}	
	by Theorem \ref{GaussAppr}. On the other hand,
	\begin{align*}
		&\mathbb{P}\left(\|\mathbb{D}_n\|\leq q_{\|\mathbb{G}_n\|_{\mathcal{X}_{n,m}}}(\alpha)\right)\\
		&~\geq\alpha-\frac{\mathcal{C}}{n}-\mathbb{P}\left(q_{\|\mathbb{G}_n\|_{\mathcal{X}_{n,m}}}(\alpha)-\rho_n\leq\|\mathbb{G}_n\|\leq q_{\|\mathbb{G}_n\|_{\mathcal{X}_{n,m}}}(\alpha)\right).
	\end{align*}
	Note that \eqref{dist} implies
	\begin{align*}
		|s-t|\leq|\mathcal{X}_{n,m}|\quad\Rightarrow\quad d_{\mathbb{G}_n}(s,t)\leq\frac{|\mathcal{X}_{n,m}|h^{\beta}\|K^{(1)}(\cdot;h)\|_{\infty}}{h^{3/2}a_n^{1/2}}\leq\frac{\mathcal{C}|\mathcal{X}_{n,m}|}{h^{3/2}a_n^{1/2}}.
	\end{align*}
	Hence,
	\begin{align*}
		&\|\mathbb{G}_n\|_{\mathcal{X}_{n,m}}\leq\|\mathbb{G}_n\|\leq\|\mathbb{G}_n\|_{\mathcal{X}_{n,m}}+\sup_{s,t:|s-t|\leq|\mathcal{X}_{n,m}|}|\mathbb{G}_n(s)-\mathbb{G}_n(t)|\\
		&~\leq\|\mathbb{G}_n\|_{\mathcal{X}_{n,m}}+\sup_{s,t:d_{\mathbb{G}_n}(s,t)\leq\mathcal{C}\frac{|\mathcal{X}_{n,m}|}{h^{3/2}a_n^{1/2}}}|\mathbb{G}_n(s)-\mathbb{G}_n(t)|=:\|\mathbb{G}_n\|_{\mathcal{X}_{n,m}}+\tau_n.
	\end{align*}
	This yields
	\begin{align*}
		&\mathbb{P}\left(\|\mathbb{D}_n\|\leq q_{\|\mathbb{G}_n\|_{\mathcal{X}_{n,m}}}(\alpha)\right)\\
		&~\geq\alpha-\frac{\mathcal{C}}{n}-\mathbb{P}\left(q_{\|\mathbb{G}_n\|_{\mathcal{X}_{n,m}}}(\alpha)-\rho_n-\tau_n\leq\|\mathbb{G}_n\|_{\mathcal{X}_{n,m}}\leq q_{\|\mathbb{G}_n\|_{\mathcal{X}_{n,m}}}(\alpha)\right).
	\end{align*}
	By Corollary 2.2.8 in \cite{VanWel1996}  and Lemma \ref{Le:Entropy}, we find
	\begin{align*}
		\mathbb{E}[\tau_n]&\leq\mathcal{C}\int_{0}^{\mathcal{C}\frac{|\mathcal{X}_{n,m}|}{h^{3/2}a_n^{1/2}}}\sqrt{\ln\big(N([0,1],d_{\mathbb{G}_n},\eta)\big)}\,\mathrm{d}\eta\leq\mathcal{C}\tfrac{|\mathcal{X}_{n,m}|\sqrt{-\ln(|\mathcal{X}_{n,m}|)}}{h^{3/2}a_n^{1/2}}.
	\end{align*}
	Since $|\mathcal{X}_{n,m}|\leq h^{1/2}/na_n^{1/2}$, we have that $\tfrac{|\mathcal{X}_{n,m}|\sqrt{-\ln(|\mathcal{X}_{n,m}|)}}{h^{3/2}a_n^{1/2}}=o(\rho_n^2)$ and therefore, by Markov's inequality,
	\begin{align*}
		\mathbb{P}\left(\tau_n\geq\rho_n\right)\leq \mathcal{C}\tfrac{|\mathcal{X}_{n,m}|\sqrt{-\ln(|\mathcal{X}_{n,m}|)}}{h^{3/2}a_n^{1/2}}\frac{1}{\rho_n}=o(\rho_n).
	\end{align*}
	This yields
	\begin{align*}
		&\mathbb{P}\left(\|\mathbb{D}_n\|\leq q_{\|\mathbb{G}_n\|_{\mathcal{X}_{n,m}}}(\alpha)\right)\\
		&~\geq\alpha-\frac{\mathcal{C}}{n}-\mathbb{P}\left(q_{\|\mathbb{G}_n\|_{\mathcal{X}_{n,m}}}(\alpha)-2\rho_n\leq\|\mathbb{G}_n\|_{\mathcal{X}_{n,m}}\leq q_{\|\mathbb{G}_n\|_{\mathcal{X}_{n,m}}}(\alpha)\right)-o(\rho_n)\\
		&~\geq\alpha-\frac{\mathcal{C}}{n}-\sup_{x\in\mathbb{R}}\mathbb{P}\left(\|\mathbb{G}_n\|_{\mathcal{X}_{n,m}}\in [x-2\rho_n,x+2\rho_n]\right)-o(\rho_n)\geq\alpha-\mathcal{C}\left(\tfrac{1}{n}-\rho_n\right),
	\end{align*}
	where we applied Theorem 2.1 in \cite{CheCheKat2014}. Claim 1 of this theorem now follows.
 Claim 2 is an immediate consequence of Lemma \ref{Lemma:BiasVarianz}.
 \end{proof}

\section{Proofs of the auxiliary lemmas}\label{Sec:ProofsAux}

\begin{proof}[Proof of Lemma \ref{Le:g}]
Assertion (i) is a direct consequence of Sobolev's Lemma.\\
(ii) By an application of the Hausdorff-Young inequality we obtain
\begin{align*}
	\Bigl\|\frac{d^{j}}{dx^j}g\Bigr\|_{\infty}\leq\frac{1}{2\pi}\Bigl\|\Phi_{\frac{d^{j}}{dx^j}g}\Bigr\|_{1},\quad j=0,1,2.
\end{align*}
Fourier transformation converts differentiation into multiplication, that is,
\begin{align*}
	\Bigl\|\Phi_{\frac{d^{j}}{dx^j}g}\Bigr\|_{1}=\|(\cdot)^{j}\Phi_{g}\|_1,\quad j=0,1,2.
\end{align*}
Since $g\in\mathcal{W}^m(\R)$ for $m>5/2$ by Assumption \ref{AssSmoothness} it follows by an application of the Cauchy-Schwarz inequality that $\|(\cdot)^{j}\Phi_{g}\|_1<\infty$ for $j=0,1,2$ and the assertion follows.
\end{proof}
\begin{proof}[Proof of Lemma \ref{Le:gamma}]
Assertion (i) follows from Proposition 8.10 in \cite{folland1984} since $f_{\Delta}$ is a density and is hence integrable.\\
Assertion (ii) is a direct consequence of Assumption \ref{AssSmoothness} and the convolution theorem:
\begin{align*}
	\Phi_{\gamma}=\Phi_{g*f_{\Delta}(-\cdot)}=\Phi_{g}\cdot\overline{\Phi}_{f_{\Delta}},
\end{align*}
since $\Phi_{f_{\Delta}}$ is bounded.\\
Assertion (iii) follows in the same manner as the second claim of Lemma \ref{Le:g}.
\end{proof}
\begin{proof}[Proof of Lemma \ref{Le:nu}]
(i) Recall from definition \eqref{VarEta}  that
\begin{align*}
	\nu^2(z)  = \int \big(g(z+ \delta) - \gamma(z) \big)^2\, f_\Delta(\delta)\, d\, \delta + \sigma^2,\quad\sigma^2>0.
\end{align*}  
Hence, it follows from Lemma \ref{Le:g} (ii) and Lemma \ref{Le:gamma} (iii) that 	
\begin{align*}
	0<\sigma^2\leq\nu^2(z)\leq\sigma^2+2(\|g\|_{\infty}^2+\|\gamma\|_{\infty}^2)<\infty.
\end{align*}
(ii) By the first assertions of  Lemma \ref{Le:g} and Lemma \ref{Le:gamma}, the functions $g$ and $\gamma$ are twice continuously differentiable and $f_{\Delta}$ is continuous. This yields for $j=1,2$
\begin{align*}
	\frac{d^j}{dz^j}\nu^2(z)=\int\frac{\partial^j}{\partial z^j} \Big(\big(g(z+ \delta) - \gamma(z) \big)^2\, f_\Delta(\delta)\Big)\, d\, \delta.
\end{align*}
Since by Lemma \ref{Le:g} and Lemma \ref{Le:gamma} the derivatives of $g$ and $\gamma$ are uniformly bounded and $f_{\Delta}$ is a probability density, we find for $j=1,2$
\begin{align*}
	\biggl|\frac{d^j}{dz^j}\nu^2(z)\biggr|\leq\sup_{z\in\mathbb{R}}\Big|\frac{\partial^j}{\partial z^j} \big(g(z+ \delta) - \gamma(z) \big)^2\Big|.
\end{align*}
\end{proof}

\begin{proof}[Proof of Lemma \ref{tailK}]
From \eqref{eq:asympkernel}, we deduce for $w\in\R$
\begin{align*}
	i w K(w;h) &= \frac{1}{2\pi}\int_{\R}  e^{-it w} \frac{\mathrm{d}}{\mathrm{d}t}\left(\frac{\Phi_k(t)}{\Phi_{f_{\Delta}}(-t/h)}\right) dt\\
	&=\frac{1}{2\pi}\int_{\R}  e^{-it w} \left(\frac{\Phi_k^{(1)}(t)}{\Phi_{f_{\Delta}}(-t/h)}+\frac{\Phi_k(t)\cdot\Phi^{(1)}_{f_{\Delta}}(-t/h)}{h(\Phi_{f_{\Delta}}(-t/h))^2}\right) dt.
\end{align*}
Hence,
\begin{align}\label{eq:supK}
	\sup_{w\in\R}|wK(w;h)|
	&\leq \frac{1}{2\pi c}\left\|\left|\Phi_k^{(1)}\right|\left\langle\tfrac{\cdot}{h}\right\rangle^{\beta}+\tfrac{\left|\Phi_k\cdot\Phi^{(1)}_{f_{\Delta}}(-\cdot/h)\right|}{ch}\left\langle\tfrac{\cdot}{h}\right\rangle^{2\beta}\right\|_1\notag\\&=\begin{cases}
		O(h^{-\beta-1}),\,\eqref{W},\\
		O(h^{-\beta}),\, \eqref{S}.
	\end{cases}
\end{align}
In particular, for all $w\in\R\backslash\{0\},$
\begin{align}\label{Eq:KL1}
	|K(w;h)|\leq\frac{\mathcal{C}}{|w|}\cdot\begin{cases}
		h^{-\beta-1}, & \eqref{W},\\
		h^{-\beta}, & \eqref{S}.
	\end{cases}
\end{align}
Now, let $a>1$. Then
\begin{align*}
	\int_{\{|z|>a\}}\left(K\left(\frac{z-x}{h};h\right)\right)^2\,dz&\leq\mathcal{C}\int_{\{|z|>a\}}\left(\frac{h}{z-x}\right)^2\,dz\cdot\begin{cases}
		h^{-2\beta-2}, & \eqref{W}\\
		h^{-2\beta}, & \eqref{S}
	\end{cases}\\
	&\leq \mathcal{C}\frac{2a}{a^2-x^2}\begin{cases}
		h^{-2\beta}, & \eqref{W}\\
		h^{-2\beta+2}, & \eqref{S}
	\end{cases}.
\end{align*}
\end{proof}

\begin{proof}[Proof of Lemma \ref{Lemma:BiasVarianz}]
The proof of Lemma \ref{Lemma:BiasVarianz} is straightforward but tedious. We therefore omit the proof here and defer it to the appendix.
\end{proof}
\begin{proof}[Proof of Lemma \ref{Le:Entropy}]
	\begin{align}\label{dist}
		&d_{\mathbb{G}_n}(s,t)^2=\mathbb{E}|\mathbb{G}_n(s)-\mathbb{G}_n(t)|^2
		=\frac{h^{2\beta}}{na_nh}\sum_{j=-n}^n\Bigl|K\Bigl(\frac{w_j-s}{h};h\Bigr)-K\Bigl(\frac{w_j-t}{h};h\Bigr)\Bigr|^2\notag\\
		&\leq \frac{h^{2\beta}\|K^{(1)}(\cdot;h)\|^2_{\infty}}{a_nh}\left(\frac{s-t}{h}\right)^2\leq\mathcal{C}\frac{h^{2\beta}}{a_nh}\left\|t\frac{\Phi_k(t)}{\Phi(-t/h)}\right\|_1^2\left(\frac{s-t}{h}\right)^2,
	\end{align}
	where the last estimate follows by the Hausdorff-Young inequality and definition \eqref{eq:asympkernel}.
	Therefore, by Assumption \ref{AssConvergenceS}, there exists a constant $C_E$ such that $d_{\mathbb{G}_{n}}(s,t)\leq C_E|s-t|/(a_n^{\frac{1}{2}}h^{\frac{3}{2}})$. Now, consider the equidistant grid 
	\begin{align*}
		\mathcal{G}_{n,\delta}:=\biggl\{t_j=j\tfrac{a_n^{\frac{1}{2}}}{C_E}\delta,\,j=1,\ldots,\biggl\lfloor\tfrac{C_E}{a_n^{\frac{1}{2}}h^{\frac{3}{2}}\delta}\biggr\rfloor\biggr\}\subset[0,1]
	\end{align*}
	and note that for each $s\in[0,1]$ there exists a $t_j\in\mathcal{G}_{n,\delta}$ such that $|s-t_j|\leq a_n^{1/2}h^{3/2}\delta/(2C_E)$, which implies $d_{\mathcal{G}_n}(s,t_j)\leq\delta/2.$ Therefore, the closed $d_{\mathbb{G}_n}$-balls with centers $t_j\in \mathcal{G}_{n,\delta}$ and radius $\delta/2$ cover the space $[0,1]$, i.e.,
	\begin{align*}
		N([0,1],\delta/2,d_{\mathbb{G}_n})\leq\tfrac{C_E}{h^{\frac{3}{2}}a_n^{\frac{1}{2}}\delta}.
	\end{align*}
	The relationship $N([0,1],\delta,d_{\mathbb{G}_n})\leq D([0,1],\delta,d_{\mathbb{G}_n})\leq N([0,1],\delta/2,d_{\mathbb{G}_n})$
	now yields the first claim of the lemma. Using that, by Assumption \ref{AssConvergenceS},
	\begin{align*}
		\|\widehat K^{(1)}(\cdot,h)\|_{\infty}&\leq\|\cdot K^{(1)}(\cdot,h)\|_{\infty}+\|K(\cdot,h)\|_{\infty}\\
		&\leq \mathcal{C}\left(\left\|\frac{\mathrm{d}}{\mathrm{d}t}\left(\frac{t\Phi_k(t)}{\Phi_{f_{\Delta}}(-t/h)}\right)\right\|_1+\left\|\left(\frac{t\Phi_k(t)}{\Phi_{f_{\Delta}}(-t/h)}\right)\right\|_1\right)\leq\mathcal{C}h^{-\beta},
	\end{align*}

the second claim follows along the lines of the first claim.
\end{proof}

\newpage

\begin{appendix}
	
\section{Appendix: Proofs of technical results in the main paper} \label{sec:appendix}

	\subsection{Proof of Lemma \ref{Lemma:BiasVarianz}}
	
	(i) We have that
	\begin{align*}
		\mathbb{E}\left[\hat g_n(x;h)\right]&=\frac{1}{na_nh}\sum_{j=-n}^{n}\gamma(w_j)K\left(\frac{w_j-x}{h};h\right)\\
		&=\frac{1}{h}\sum_{j=-n}^{n}\int_{w_j}^{w_j+\frac{1}{na_n}}\gamma(w_j)K\left(\frac{w_j-x}{h};h\right)\,dz\\
		&=\int_{-\frac{1}{a_n}}^{\frac{1}{a_n}+\frac{1}{na_n}}\gamma(z)K\left(\frac{z-x}{h};h\right)\,dz+R_{n,1}(x)+R_{n,2}(x),
	\end{align*}	  
	where 
	\begin{align*}
		R_{n,1}(x)=\frac{1}{h}\sum_{j=-n}^{n}\int_{w_j}^{w_j+\frac{1}{na_n}}\frac{\rm{d}}{{\rm d}u}\left(\gamma(u)K\left(\frac{u-x}{h};h\right)\right)\bigg|_{u=z}(w_j-z)\,dz
	\end{align*}
	and
	\begin{align*}
		R_{n,2}(x)=\frac{1}{2\, h}\sum_{j=-n}^{n}\int_{w_j}^{w_j+\frac{1}{na_n}}\frac{\rm{d}^2}{{\rm d}u^2}\left(\gamma(u)K\left(\frac{u-x}{h};h\right)\right)\bigg|_{u=\tilde w_j(z)}(w_j-z)^2\,dz.
	\end{align*}
	Then,
	\begin{align*}
		R_{n,1}(x)&=\frac{1}{h}\sum_{j=-n}^{n}\int_{w_j}^{w_j+\frac{1}{na_n}}\gamma'(z)K\left(\frac{z-x}{h};h\right)(w_j-z)\,dz
		\\&-\frac{1}{h^2}\sum_{j=-n}^{n}\int_{w_j}^{w_j+\frac{1}{na_n}}\gamma(z)K'\left(\frac{z-x}{h};h\right)(w_j-z)\,dz\\&=:R_{n,1,1}(x)+R_{n,1,2}(x).
	\end{align*}	
	Now,
	{\small
		\begin{align*}
			na_nh|R_{n,1,1}(x)|\leq\int_{-\frac{1}{a_n}}^{\frac{1}{a_n}}\left|\gamma'(z)K\left(\frac{z-x}{h};h\right)\right|\,dz\leq\|\gamma'\|_2\left\|K\left(\frac{\cdot-x}{h};h\right)\right\|_2=O\left(\frac{1}{h^{-\frac{1}{2}+\beta}}\right).
		\end{align*}
	}
	Analogously,
	\begin{align*}
		|R_{n,1,2}(x)|=O\left(\frac{1}{na_nh^{\frac{3}{2}+\beta}}\right).
	\end{align*}
	Furthermore
	\begin{align*}
		&|R_{n,2}(x)|\leq\biggl[\frac{1}{n^2a_n^2h}\sum_{j=-n}^{n}\int_{w_j}^{w_j+\frac{1}{na_n}}\left|\gamma''(\tilde w_j(z))K\left(\frac{\tilde w_j(z)-x}{h};h\right)\right|\,dz
		\\&+\frac{2}{n^2a_n^2h^2}\sum_{j=-n}^{n}\int_{w_j}^{w_j+\frac{1}{na_n}}\left|\gamma'(\tilde w_j(z))K'\left(\frac{\tilde w_j(z)-x}{h};h\right)\right|\,dz\biggr]\\
		&+\frac{1}{n^2a_n^2h^3}\sum_{j=-n}^{n}\int_{w_j}^{w_j+\frac{1}{na_n}}\left|\gamma(\tilde w_j(z))K''\left(\frac{\tilde w_j(z)-x}{h};h\right)\right|\,dz\\
		&=:[R_{n,2,1}(x)]+R_{n,2,2}(x).
	\end{align*}
	Then,
	$
	R_{n,2,1}(x)=O\left(\frac{1}{n^2a_n^{3}h^{2+\beta}}+\frac{1}{n^2a_n^{3}h^{2+\beta}}\right)=o\left(\frac{1}{na_nh^{\frac{3}{2}+\beta}}\right),
	$
	since $h/a_n\to0$. By Assumption 1 (iii) $\F\gamma\in\mathcal{W}^s,s>\frac{1}{2}$, therefore $\gamma\in L^1(\R)$ and hence
	\begin{align*}
		R_{n,2,2}&\leq\frac{\mathcal{C}}{h^{3+\beta}(na_n)^2}\left(\|\gamma\|_1+\frac{1}{na_n^2}\right)=O\left(\frac{1}{n^2a_n^2h^{3+\beta}}+\frac{1}{n^3a_n^4h^{3+\beta}}\right)\\
		&=O\left(\frac{1}{na_nh^{\frac{3}{2}+\beta}}\left(\frac{1}{na_nh^{\frac{3}{2}}}+\frac{1}{n^2a_n^3h^{\frac{3}{2}+\beta}}\right)\right)=o\left(\frac{1}{na_nh^{\frac{3}{2}+\beta}}\right),
	\end{align*}
	where we used again that $h/a_n\to0$.
	Hence, in total we find
	\begin{align*}
		\mathbb{E}[\hat g_n(x;h)]&=\frac{1}{h}\int_{-\frac{1}{a_n}}^{\frac{1}{a_n}}\gamma(z)K\Bigl(\frac{z-x}{h};h\Bigr)\,dz+O\Bigl(\frac{1}{na_nh^{\frac{3}{2}+\beta}}\Bigr).
	\end{align*}
	Next, we enlarge the domain of integration and estimate the remainder as follows. By the Cauchy-Schwarz inequality we obtain 
	\begin{align*}
		\int_{|z|>\frac{1}{a_n}}\gamma(z)K\left(\frac{z-x}{h};h\right)\,dz&\leq\left(\int_{|z|>\frac{1}{a_n}}|\gamma(z)|^2\,dz\right)^{\frac{1}{2}}\\&\times\left(\int_{|z|>\frac{1}{a_n}}\left|K\left(\frac{z-x}{h};h\right)\right|^2\,dz\right)^{\frac{1}{2}}.
	\end{align*}
	By Assumption 1 (iii)
	\begin{align*}
		\int_{|z|>\frac{1}{a_n}}|\gamma(z)|^2\,dz\leq\int_{|z|>\frac{1}{a_n}}\frac{(1+z^2)^s}{(1+\frac{1}{a_n^2})^{s}}|\gamma(z)|^2\,dz\leq\mathcal{C}a_n^{2s}.
	\end{align*}	
	By Lemma 4
	\begin{align*}
		\int_{\{|z|>\frac{1}{a_n}\}}\left(K\left(\frac{z-x}{h};h\right)\right)^2\,dz&\leq \mathcal{C}a_n\left.\begin{cases}
			h^{-2\beta}, & (W)\\
			h^{-2\beta+2}, & (S)
		\end{cases}\right\}=O(a_n\|K(\cdot;h)\|_2^2).
	\end{align*}
	Hence,
	\begin{align*}
		\mathbb{E}[\hat g_n(x;h)]=\frac{1}{h}\int\gamma(z)K\Bigl(\frac{z-x}{h};h\Bigr)\,dz&+o\biggl(\frac{1}{na_nh^{\frac{3}{2}+\beta}}\biggr)\\&+\begin{cases}
			O\left(a_n^{s+1/2}h^{1-\beta}\right),& \text{Ass.~3, (S),}\\
			O\left(a_n^{s+1/2}h^{-\beta}\right),& \text{ Ass.~4, (W).}
		\end{cases}
	\end{align*}
	Furthermore, by Plancherel's equality and the convolution theorem,
	\begin{align*}
		&\frac{1}{h}\int\gamma(z)K\Bigl(\frac{z-x}{h};h\Bigr)\,dz=\frac{1}{h}
		\int\gamma(z)\widetilde K\Bigl(\frac{z-x}{h};h\Bigr)\,dz\\&=
		\frac{1}{2\pi h }\int\overline{\Phi_{\gamma}(z)}\Phi_{\widetilde K(\frac{\cdot-x}{h};h)}(z)\,dz
		=\frac{1}{2\pi  }\int\exp(ixz)\overline{\Phi_{\gamma}(z)}\Phi_{\widetilde K(\cdot;h)}(zh)\,dz\\
		&=\frac{1}{2\pi  }\int\exp(ixz)\overline{\Phi_{f_{\Delta}}(z)\Phi_g(z)}\frac{\Phi_k(hz)}{\Phi_{f_{\Delta}}(-z)}\,dz=\frac{1}{2\pi  }\int\exp(ixz)\Phi_g(-z)\Phi_k(hz)\,dz.
	\end{align*}
	Hence,
	\begin{align*}
		&\frac{1}{h}\int\gamma(z)K\Bigl(\frac{z-x}{h};h\Bigr)\,dz\\&=
		\frac{1}{2\pi  }\int\exp(ixz)\Phi_g(-z)\,dz+\frac{1}{2\pi  }\int\exp(ixz)\Phi_g(-z)\left(\Phi_k(hz)-1\right)\,dz\\
		&=g(x)+\frac{1}{2\pi  }\int\exp(ixz)\Phi_g(z)\left(\Phi_k(hz)-1\right)\,dz=g(x)+R_n(x).
	\end{align*}	
	Finally,
	\begin{align*}
		|R_{n}(x)|\leq \mathcal{C}\int_{|z|>D/h}|\Phi_g(z)|\,dz&\leq\left(\int_{|z|>D/h}\left(\frac{1}{1+z^2}\right)^m\,dz\right)^{\frac{1}{2}}\\&\times\left(\int_{|z|>D/h}|\langle z\rangle^{2m}|\Phi_g(z)|^2\,dz\right)^{\frac{1}{2}},
	\end{align*}
	which yields the estimate $R_{n}(x)=O(h^{m-\frac{1}{2}})$.\\
	(ii)  In the situation of both, (ii)a) and (ii)b), we have
	\begin{align*}
		{\rm Var}\bigl[\hat g_n(x;h)\bigr]&=\frac{1}{n^2a_n^2h^2}\sum_{j=-n}^{n}\nu^2(w_j)\biggl|K\Bigl(\frac{w_j-x}{h};h\Bigr)\biggr|^2\\
		&=\frac{1}{na_nh^2}\int_{-\frac{1}{a_n}}^{\frac{1}{a_n}+\frac{1}{na_n}}\nu^2(z)\biggl(K\Bigl(\frac{z-x}{h};h\Bigr)\biggr)^2\,dz
		+R_{n}(x),
		%
	\end{align*}
	where
	{\small
		\begin{align*}
			R_{n}(x)&=\frac{1}{na_nh^2}\sum_{j=-n}^n\int_{w_j}^{w_{j}+\frac{1}{na_n}}\biggl[\nu^2(w_j)\biggl(K\Bigl(\frac{w_j-x}{h};h\Bigr)\biggr)^2-\nu^2(z)\biggl(K\Bigl(\frac{z-x}{h};h\Bigr)\biggr)^2\biggr]\,dz.\\
		\end{align*}
	}
	Then
	{\small
		\begin{align*}
			R_{n}(x)&=\frac{1}{na_nh^2}\sum_{j=-n}^n\int_{w_j}^{w_{j}+\frac{1}{na_n}}\nu^2(w_j)\biggl[\biggl(K\Bigl(\frac{w_j-x}{h};h\Bigr)\biggr)^2-\biggl(K\Bigl(\frac{z-x}{h};h\Bigr)\biggr)^2\biggr]\,dz\\
			&+\frac{1}{na_nh^2}\sum_{j=-n}^n\int_{w_j}^{w_{j}+\frac{1}{na_n}}\biggl(K\Bigl(\frac{z-x}{h};h\Bigr)\biggr)^2\bigl[\nu^2(w_j)-\nu^2(z)\bigr]\,dz\\
			&=:R_{n,1}(x)+R_{n,2}(x).
		\end{align*}
	}
	By uniform Lipschitz continuity of $\nu^2$ (see Lemma 3 (ii)), it is immediate that
	\begin{align*}
		|R_{n,2}(x)|&\leq\frac{\mathcal{C}}{n^2a_n^2h^2}\int\biggl(K\Bigl(\frac{z-x}{h};h\Bigr)\biggr)^2\leq\frac{\mathcal{C}}{n^2a_n^2h}\|K(\cdot,h)\|^2\\&=O\left(\frac{1}{n^2a_n^2h^{1+2\beta}}\right).
	\end{align*}
	Next, we consider the term $R_{n,1}$ for which we will use a Taylor expansion of $K^2(\cdot;h)$. To this end, notice first from (7) that for any $l$
	\begin{align*}
		K^{(l)}(w;h)=\frac{(-1)^l}{2\pi}\int\frac{e^{-itw}\Phi_k(t)\cdot t^{l}}{\Phi_{f_{\Delta}(-t/h)}}\,dt,
	\end{align*}	
	where the functions $F_l:t\mapsto\Phi_k(t)\cdot t^{l}$	is uniformly bounded by $1$ and twice continuously differentiable by Assumption 2 for any $l\in\mathbb{N}$. It follows that $K^2(\cdot;h)$ is smooth with integrable derivatives of all orders $l\in\mathbb{N}$, since 
	\begin{align*}
		(K^2)^{(l)}(w;h)=(K\cdot K)^{(l)}(w;h)=\sum_{k=0}^l\binom{l}{k}K^{(l-k)}(w;h)K^{(k)}(w;h),
	\end{align*}
	by the general Leibniz rule. This yields
	\begin{align}\label{eq:ii}
		\int\left|(K^2)^{(l)}(w;h)\right|\,dw&\leq\sum_{k=0}^l\binom{l}{k}\left(\int |K^{(l-k)}(w;h)|^2\,dw\right)^{\frac{1}{2}}\notag\\&\quad\times\left( \int |K^{(k)}(w;h)|^2\,dw\right)^{\frac{1}{2}}\leq\frac{\mathcal{C}}{h^{2\beta}},\tag{S1}
	\end{align}
	by Lemma 4 and the previous discussion. Let $M\in\mathbb{N}$ be such that $M\geq\frac{2}{\beta}-1$. It follows that
	\begin{align*}
		|R_{n,1}(x)|&\leq\frac{\mathcal{C}}{na_nh^{2}}\Biggl(\sum_{j=-n}^n\int_{w_j}^{w_{j}+\frac{1}{na_n}}\left[\sum_{l=1}^M\biggl|(K^2)^{(l)}\Bigl(\frac{z-x}{h};h\Bigr)\biggr|\left(\frac{1}{na_nh}\right)^l\right]\,dz\\ &\qquad\qquad+\left(\frac{1}{na_nh}\right)^{M+1}\cdot\frac{1}{a_nh^{2\beta}}\Biggr).
	\end{align*}
	By \eqref{eq:ii}, we deduce
	\begin{align*}
		|R_{n,1}(x)|&\leq\frac{\mathcal{C}}{na_nh^{1+2\beta}}\left(\frac{1}{na_nh}+\frac{1}{ha_n}\left(\frac{1}{na_nh}\right)^{M+1}\right)\\&\leq\frac{\mathcal{C}}{na_nh^{1+2\beta}}\left(\frac{1}{na_nh}+\left(\frac{1}{na_nh^{1+\frac{2}{M+1}}}\right)^{M+1}\right).
	\end{align*}
	Since $M\geq\frac{2}{\beta}-1$, we finally obtain
	\begin{align*}
		|R_{n,1}(x)|&=o\left(\frac{1}{na_nh^{1+2\beta}}\right).
	\end{align*}
	An application of Plancherel's theorem and Assumption 5 give
	\begin{align}\label{eq:iib}
		\frac{1}{\pi C h^{2\beta}}\leq\|K(\cdot;h)\|^2_2=\frac{1}{2\pi}\bigg\|\frac{\Phi_k}{\Phi_{f_{\Delta}}(\cdot/h)}\bigg\|_2^2\leq \frac{1}{\pi c}\bigg(1+\frac{1}{h^2}\bigg)^{\beta}.\tag{S2}
	\end{align}	
	Now, if (S) holds, an application of Lemma 4 yields
	\begin{align*}
		\sup_{x\in[0,1]}\left|\int_{\frac{1}{a_n}}^{\infty}\biggl(K\Bigl(\frac{z-x}{h};h\Bigr)\biggr)^2\,dz\right|=O\left(\frac{a_nh^2}{h^{2\beta}}\right)=O\left(a_nh^2\|K(\cdot;h)\|_2^2\right).
	\end{align*}
	Thus,
	\begin{align*}
		&\int_{-\frac{1}{a_n}}^{\frac{1}{a_n}+\frac{1}{na_n}}\nu^2(z){\small\biggl(K\Bigl(\frac{z-x}{h};h\Bigr)\biggr)^2}\,dz=h\nu^2(x)\|K(\cdot;h)\|^2_2(1+O(ha_n))+R_{n,3}(x),
	\end{align*}	
	where
	\begin{align*}
		|R_{n,3}(x)|&:=\bigg|\int_{-\frac{1}{a_n}}^{\frac{1}{a_n}+\frac{1}{na_n}}\big(\nu^2(z)-\nu^2(x)\big)\biggl(K\Bigl(\frac{z-x}{h};h\Bigr)\biggr)^2\,dz\bigg|\\
		&\leq\mathcal{C}h\int_{-\frac{2}{a_nh}}^{\frac{2}{a_nh}}\big|\nu^2(zh-x)-\nu^2(x)\big|\bigl(K(z;h)\bigr)^2\,dz\\
		&\leq\mathcal{C}h\int_{-\frac{2}{a_nh}}^{\frac{2}{a_nh}}\big|zh\big|\bigl(K(z;h)\bigr)^2\,dz.
	\end{align*}
	By (24) we have $|z\cdot K\bigl(z;h\bigr)|\leq\mathcal{C}/h^{\beta}$ and
	\begin{align*}
		|R_{n,3}(x)|\leq\mathcal{C}h^{2-\beta}\int_{-\frac{2}{a_nh}}^{\frac{2}{a_nh}}\big|K(z;h)\big|\,dz=O\left(\frac{h^2\ln(n)}{h^{2\beta}}\right),
	\end{align*}	
	since, by (24) and (25) in the proof of Lemma 4 and (S),
	\begin{align}\label{eq:int}
		&\int_{-\frac{2}{a_nh}}^{\frac{2}{a_nh}}\bigl|K\bigl(z;h\bigr)\bigr|\,dz\leq\frac{\mathcal{C}}{h^{\beta}}+ \int_{1\leq|z|\leq\frac{2}{a_nh}}\bigl|K\bigl(z;h\bigr)\bigr|\,dz\notag\\
		&\leq\frac{\mathcal{C}}{h^{\beta}}\left(1+ \int_{1\leq|z|\leq\frac{2}{a_nh}}\frac{1}{|z|}\,dz\right)\leq\frac{\mathcal{C}}{h^{\beta}}\left(1+\ln(2/a_nh)\right)=O(\ln(n)/h^{\beta}).\tag{S3}
	\end{align}	
	Assertion (ii)b) now follows. \\
	(ii)a) Under (W)
	\begin{align*}
		&\int_{-\frac{1}{a_n}}^{\frac{1}{a_n}+\frac{1}{na_n}}\nu^2(z)\biggl(K\Bigl(\frac{z-x}{h};h\Bigr)\biggr)^2\,dz\leq\sup_{y\in\mathbb{R}}\nu^2(y)h\int|K(z;h)|^2\,dz,
	\end{align*}	
	and the second inequality of (ii)a) follows by \eqref{eq:iib}.
	Furthermore,
	\begin{align*}
		\int_{-\frac{1}{a_n}}^{\frac{1}{a_n}+\frac{1}{na_n}}\nu^2(z)\biggl(K\Bigl(\frac{z-x}{h};h\Bigr)\biggr)^2\,dz&\geq\sigma^2\int_{-\frac{1}{a_n}}^{\frac{1}{a_n}+\frac{1}{na_n}}\biggl(K\Bigl(\frac{z-x}{h};h\Bigr)\biggr)^2\,dz\\&\geq\frac{h\sigma^2}{2}\|K(\cdot;h)\|_2^2, 
	\end{align*}
	for sufficiently large $n$ by Lemma 4.
	Now, the first inequality of (ii)a) follows by \eqref{eq:iib}, which concludes the proof of this lemma.
	\qed
	
	\subsection{Extensions to non-equidistant design}\label{sec:nonequidist}
	For ease of notation, we considered an equally spaced design in the main document. However, this can be relaxed to more general designs. In this section, we restate the main results (Theorem 1, Theorem 2, Lemma 5) and adjust their proofs to the case where the design is generated by a known positive design density $f_{D,n}$ on $[0,\infty)$ as follows
	\begin{align*}
		\frac{j}{n+1}=\int_{0}^{w_j}f_{D,n}(z)\,dz,\quad j=1,\ldots,n,
	\end{align*}
	and $w_j=-w_{-j}$. Note that, given the latter definition, we have $f_{D,n}(z)=(n+1)/na_nI_{[0,1/a_n]}(z)$. Furthermore, we require the following regularity assumptions.
	\begin{assumption}\label{ass:design}
		~\\
		\vspace{-0.4cm}
		\begin{enumerate}
			\item The density $f_{D,n}$ is continuously differentiable, $f_{D,n}\in C^1(\mathrm{supp}(f_{D,n}))$.
			\item There exist constants $c_D$ and $C_D$ such that $c_Da_n\leq f_{D,n}\leq C_Da_n$.
			\item The derivative $f_{D,n}'$ is uniformly bounded, $|f_{D,n}'|\leq a_nC_{D'}$.
		\end{enumerate}
	\end{assumption}	
	Regarding our estimator, we need to make the following adjustment to accommodate the more general design 
	\[
	\hat g_n(x;h) = \frac{1}{nh } \sum\limits_{j=-n}^n \frac{Y_j}{f_{D,n}(w_j)} K\left(\frac{w_j-x}{h};h\right).
	\]
	This yields the following adjusted Lemma 5 and adjusted proof.
	
	\begin{lemma}
		\label{Lemma:BiasVarianzD}
		Let Assumptions 1 and 2 be satisfied.  Further assume that 
		$h/a_n\rightarrow0$  as $n\rightarrow\infty$. 
		
		\begin{itemize}
			\item[(i)\;\;\;] Then for bias, we have that
			{\small
				\begin{align*}
					\sup_{x\in[0,1]}\bigl|\mathbb E\big[\hat g_n(x;h)\big]-g(x)\bigr|= O\left(h^{m-\frac{1}{2}}+\frac{1}{na_nh^{\beta+\frac{3}{2}}}\right)+\begin{cases}
						o\big(a_n^{s+1/2}h^{1-\beta}\big), & \text{ Ass.~3, (S),}\\
						o\big(a_n^{s+1/2}h^{-\beta}\big), & \text{Ass.~4, (W).}
					\end{cases}
				\end{align*}
			}
			\item[(ii) a)] For the variance if Assumption 5, (W) holds and $na_nh^{1+\beta}\to\infty$, then we have that
			\begin{align*}
				\frac{1}{C_D}\cdot	\frac{\sigma^2 }{2C\pi}(1+O(a_n))\leq na_nh^{1+2\beta}{\rm Var}[\hat g_n(x;h)]&\leq \frac{2^{\beta}\sup_{x\in\R}\nu^2(x) }{c\pi}\cdot\frac{1}{c_D}.
			\end{align*}		
			\item[(ii) b)]	If actually Assumption 3, (S) holds  and $na_nh^{1+\beta}\to\infty$, then
			\begin{align*}
				\frac{1}{f_{D,n}(x)}\cdot\frac{\nu^2(x)  }{C\pi}(1+O(a_n))\leq na_nh^{1+2\beta}{\rm Var}[\hat g_n(x;h)]&\leq \frac{\nu^2(x) }{c\pi}(1+O\left(h/a_n\right))\cdot\frac{1}{f_{D,n}(x)},
			\end{align*}

			Here $c,C$ and $\beta$ are the constants from  Assumption 5 respectively 3.
		\end{itemize}
	\end{lemma}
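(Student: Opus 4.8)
The plan is to follow the proof of Lemma~\ref{Lemma:BiasVarianz} almost line by line; the only genuinely new ingredient is the replacement, on the non-equidistant grid, of the factor $1/\bigl(n\,f_{D,n}(w_j)\bigr)$ by the local mesh width $w_{j+1}-w_j$. Since $\int_{w_j}^{w_{j+1}} f_{D,n}(z)\,dz = 1/(n+1)$, the mean value theorem gives $\xi_j\in[w_j,w_{j+1}]$ with $w_{j+1}-w_j = 1/\bigl((n+1)f_{D,n}(\xi_j)\bigr)$; Assumption~\ref{ass:design}\,(2) then forces every mesh width to be of exact order $1/(na_n)$, and Assumption~\ref{ass:design}\,(1),(3) yield $|f_{D,n}(\xi_j)-f_{D,n}(w_j)|\le a_nC_{D'}(w_{j+1}-w_j)$, whence $1/\bigl(n\,f_{D,n}(w_j)\bigr) = (w_{j+1}-w_j)\bigl(1+O(1/(na_n))\bigr)$ uniformly in $j$, and $f_{D,n}$ varies by a relative amount $O(h/a_n)$ over any interval of length $O(h)$. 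I would establish these two elementary estimates first and then re-run the earlier proof.

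For the bias I would write $\mathbb{E}[\hat g_n(x;h)] = \tfrac1{nh}\sum_j \tfrac{\gamma(w_j)}{f_{D,n}(w_j)}K\bigl(\tfrac{w_j-x}{h};h\bigr)$ and observe that the weight $1/f_{D,n}(w_j)$ is cancelled exactly by the mesh width, so the sum becomes a Riemann sum for $\tfrac1h\int_{-1/a_n}^{1/a_n}\gamma(z)K(\tfrac{z-x}{h};h)\,dz$ up to an $O(1/(na_n))$ relative error; the latter is $O(1/(na_nh^{\beta+1/2}))$ by $\|\gamma\|_2<\infty$ (Lemma~\ref{Le:gamma}) and $\|K(\cdot;h)\|_2=O(h^{-\beta})$, and is therefore absorbed into the term $O(1/(na_nh^{\beta+3/2}))$ already present for the equidistant design. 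The Taylor/Riemann-sum remainders are handled precisely as the terms $R_{n,1},R_{n,2}$ in the proof of Lemma~\ref{Lemma:BiasVarianz}, the only change being that the constant spacing $1/(na_n)$ is now a $j$-dependent $w_{j+1}-w_j\le\mathcal{C}/(na_n)$, which degrades no order. Enlarging the domain of integration via Assumption~\ref{AssSmoothness}\,(iii) and Lemma~\ref{tailK}, and then invoking Plancherel's identity and the convolution theorem exactly as before, reproduces $\mathbb{E}[\hat g_n(x;h)] = g(x) + O\bigl(h^{m-1/2}+1/(na_nh^{\beta+3/2})\bigr)$ together with the claimed $o$-terms; the answer coincides with that of Lemma~\ref{Lemma:BiasVarianz}, as it must, because the weight $1/f_{D,n}$ was introduced precisely to make the estimator design-adaptive.

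For the variance I would start from $\mathrm{Var}[\hat g_n(x;h)] = \tfrac1{n^2h^2}\sum_j \tfrac{\nu^2(w_j)}{f_{D,n}(w_j)^2}\bigl|K(\tfrac{w_j-x}{h};h)\bigr|^2$ and convert only \emph{one} of the two factors $1/(nf_{D,n}(w_j))$ into a mesh width, so a factor $1/f_{D,n}(z)$ survives in the limiting integral $\tfrac1{nh^2}\int_{-1/a_n}^{1/a_n}\tfrac{\nu^2(z)}{f_{D,n}(z)}\bigl(K(\tfrac{z-x}{h};h)\bigr)^2\,dz$; the discrepancy is a remainder of the same shape as $R_{n,1}+R_{n,2}$ in the proof of Lemma~\ref{Lemma:BiasVarianz} (Taylor expansion of $K^2(\cdot;h)$, with $\int|(K^2)^{(l)}(\cdot;h)|=O(h^{-2\beta})$ from Lemma~\ref{tailK}), of order $o(1/(na_nh^{1+2\beta}))$. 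Then I would localise the weight: by Lemma~\ref{Le:nu} and Assumption~\ref{ass:design} the map $z\mapsto \nu^2(z)/f_{D,n}(z)$ is Lipschitz with constant $O(1/a_n)$ (numerator of its derivative $O(a_n)$, denominator $\ge c_D^2a_n^2$), so together with the tail estimate of Lemma~\ref{tailK} I would replace $\nu^2(z)/f_{D,n}(z)$ by $\nu^2(x)/f_{D,n}(x)$ up to a factor $1+O(h/a_n)$ when \eqref{S} holds, using $\|K(\cdot;h)\|_2^2\asymp h^{-2\beta}$ from \eqref{eq:iib}. Under \eqref{W}, where Lemma~\ref{tailK} only gives the weaker tail $h^{-2\beta}$, I would use instead the crude sandwich $\sigma^2\le\nu^2\le\sup_{\R}\nu^2$ and $c_Da_n\le f_{D,n}\le C_Da_n$; multiplying through by $na_nh^{1+2\beta}$ then yields the stated bounds, the constants $1/C_D$ and $1/c_D$ arising from $a_n/f_{D,n}(x)\in[1/C_D,1/c_D]$ and the explicit $\nu^2(x)/f_{D,n}(x)$ in part (ii)\,b) surviving because of the sharper localisation available under \eqref{S}.

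The step I expect to be the main obstacle is not any individual estimate --- each mirrors one in Lemma~\ref{Lemma:BiasVarianz} --- but the uniform-in-$j$ bookkeeping: one must check that replacing the fixed spacing $1/(na_n)$ by the variable $w_{j+1}-w_j$ and carrying the surviving weight $1/f_{D,n}(z)$ through every Taylor remainder never loses an order, and in particular that the new Lipschitz bound for $\nu^2/f_{D,n}$ --- the one place where Assumption~\ref{ass:design}\,(3) (the bound on $f_{D,n}'$) is actually used --- produces an error absorbed into the $O(h/a_n)$ already present in part (ii)\,b).
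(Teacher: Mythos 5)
Your proposal is correct and follows essentially the same route as the paper's own proof: both derive the key relation $1/(n f_{D,n}(w_j)) = (w_{j+1}-w_j)\bigl(1+O(1/(na_n))\bigr)$ from the defining identity $F_{D,n}(w_{j+1})-F_{D,n}(w_j)=1/(n+1)$ together with Assumption \ref{ass:design}, convert the sums into the integrals $\tfrac1h\int\gamma(z)K(\tfrac{z-x}{h};h)\,dz$ and $\tfrac{1}{nh^2}\int\tfrac{\nu^2(z)}{f_{D,n}(z)}K(\tfrac{z-x}{h};h)^2\,dz$ (retaining exactly one factor $1/f_{D,n}$ in the variance), and absorb all Taylor, Riemann-sum and localisation remainders exactly as in Lemma \ref{Lemma:BiasVarianz}, using the Lipschitz bounds on $\nu^2$ and $f_{D,n}$ and the mesh bound $w_{j+1}-w_j\leq 1/(na_nc_D)$. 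The only caveat is cosmetic: since, as you note, the surviving weight is $a_n/f_{D,n}(x)\in[1/C_D,1/c_D]$, your computation (like the paper's) actually yields $na_nh^{1+2\beta}\,{\rm Var}[\hat g_n(x;h)]$ proportional to $\nu^2(x)\,a_n/f_{D,n}(x)$, so the factor printed as $1/f_{D,n}(x)$ in part (ii)\,b) of the statement should be read as $a_n/f_{D,n}(x)$ --- a misprint in the lemma as stated, not a gap in your argument.
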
	
	\begin{proof}[Proof of Lemma \ref{Lemma:BiasVarianzD}]
		(i) We have that
		\begin{align*}
			\mathbb{E}\left[\hat g_n(x;h)\right]&=\frac{1}{nh}\sum_{j=-n}^{n}\frac{\gamma(w_j)}{f_{D,n}(w_j)}K\left(\frac{w_j-x}{h};h\right)\\
			&=\frac{1}{nh}\sum_{j=-n}^{n-1}\int_{w_j}^{w_{j+1}}\frac{\gamma(w_j)}{f_{D,n}(w_j)(w_{j+1}-w_j)}K\left(\frac{w_j-x}{h};h\right)\,dz.
		\end{align*}
		Next, observe
		\begin{align*}
			f_{D,n}(w_j)(w_{j+1}-w_j)&= F_{D,n}(w_{j+1})-F_{D,n}(w_j)-\frac{1}{2}f_{D,n}'(w_j^*)(w_{j+1}-w_j)^2,\\
			&=\frac{1}{n+1}+\frac{1}{2}f_{D,n}'(w_j^*)(w_{j+1}-w_j)^2,
		\end{align*}	
		where $F_{D,n}$ is the primitive of $f_{D,n}$. This yields
		\begin{align*}
			\left|f_{D,n}(w_j)(w_{j+1}-w_j)-\frac{1}{n}\right|\leq\frac{1}{n^2}+\frac{1}{2}C_{D'}\frac{1}{n^2a_n^2c_D}
		\end{align*}
		and thus
		\begin{align}\label{eq:rep}
			\frac{1}{f_{D,n}(w_j)(w_{j+1}-w_j)}=n\left(1+O\left(\tfrac{1}{na_n^2}\right) \right).\tag{S4}
		\end{align}
		Replacement of $1/f_{D,n}(w_j)(w_{j+1}-w_j)$ by the latter estimate now yields
		\begin{align*}
			\mathbb{E}\left[\hat g_n(x;h)\right]
			&=\frac{1}{h}\int_{-\frac{1}{a_n}}^{\frac{1}{a_n}}\gamma(z)K\left(\frac{z-x}{h};h\right)\,dz\left(1+O\left(\tfrac{1}{na_n^2}\right) \right)+R_{n,1}(x)+R_{n,2}(x),
		\end{align*}	  
		where 
		\begin{align*}
			R_{n,1}(x)=\frac{1}{h}\sum_{j=-n}^{n-1}\int_{w_j}^{w_{j+1}}\frac{\rm{d}}{{\rm d}u}\left(\gamma(u)K\left(\frac{u-x}{h};h\right)\right)\bigg|_{u=z}(w_j-z)\,dz
		\end{align*}
		and
		\begin{align*}
			R_{n,2}(x)=\frac{1}{2\, h}\sum_{j=-n}^{n-1}\int_{w_j}^{w_{j+1}}\frac{\rm{d}^2}{{\rm d}u^2}\left(\gamma(u)K\left(\frac{u-x}{h};h\right)\right)\bigg|_{u=\tilde w_j(z)}(w_j-z)^2\,dz.
		\end{align*}
		For $z\in [w_j,w_{j+1}]$  we obtain the following estimates by Assumption \ref{ass:design}:
		$$|w_j-z|\leq|w_j-w_{j+1}|\leq1/(na_nc_D)$$. Therefore, the rest of the proof of claim (i) follows along the lines of the proof of Lemma 5 (i).\\
		(ii)  In the situation of both, (ii)a) and (ii)b), we have
		\begin{align*}
			{\rm Var}\bigl[\hat g_n(x;h)\bigr]&=\frac{1}{n^2h^2}\sum_{j=-n}^{n}\frac{\nu^2(w_j)}{f_{D,n}(w_j)^2}\biggl|K\Bigl(\frac{w_j-x}{h};h\Bigr)\biggr|^2\\
			&=\frac{1}{nh^2}\int_{-\frac{1}{a_n}}^{\frac{1}{a_n}}\frac{\nu^2(z)}{f_{D,n}(z)}\biggl(K\Bigl(\frac{z-x}{h};h\Bigr)\biggr)^2\,dz\cdot\left(1+O\left(\frac{1}{na_n^2}\right) \right)
			+R_{n}(x),
			%
		\end{align*}
		where
		{\small
			\begin{align*}
				R_{n}(x)&=\frac{1}{nh^2}\sum_{j=-n}^{n-1}\int_{w_j}^{w_{j+1}}\biggl[\frac{\nu^2(w_j)}{nf_{D,n}(w_j)^2(w_{j+1}-w_j)}\biggl(K\Bigl(\frac{w_j-x}{h};h\Bigr)\biggr)^2-\frac{\nu^2(z)}{f_{D,n}(z)}\biggl(K\Bigl(\frac{z-x}{h};h\Bigr)\biggr)^2\biggr]\,dz.\\
			\end{align*}
		}
		Then
		{\small
			\begin{align*}
				R_{n}(x)&=\frac{1}{nh^2}\sum_{j=-n}^{n-1}\int_{w_j}^{w_{j+1}}\frac{\nu^2(w_j)}{nf_{D,n}(w_j)^2(w_{j+1}-w_j)}\biggl[\biggl(K\Bigl(\frac{w_j-x}{h};h\Bigr)\biggr)^2-\biggl(K\Bigl(\frac{z-x}{h};h\Bigr)\biggr)^2\biggr]\,dz\\
				&+\frac{1}{nh^2}\sum_{j=-n}^n\int_{w_j}^{w_{j+1}}\biggl(K\Bigl(\frac{z-x}{h};h\Bigr)\biggr)^2\biggl[\frac{\nu^2(w_j)}{nf_{D,n}(w_j)^2(w_{j+1}-w_j)}-\frac{\nu^2(z)}{f_{D,n}(z)}\biggr]\,dz.
			\end{align*}
		}
		Using \eqref{eq:rep}, we further obtain
		\begin{align*}
			R_{n}(x)&=\frac{1}{nh^2}\sum_{j=-n}^{n-1}\int_{w_j}^{w_{j+1}}\frac{\nu^2(w_j)}{f_{D,n}(w_j)}\biggl[\biggl(K\Bigl(\frac{w_j-x}{h};h\Bigr)\biggr)^2-\biggl(K\Bigl(\frac{z-x}{h};h\Bigr)\biggr)^2\biggr]\,dz\cdot\left(1+O\left(\tfrac{1}{na_n}\right)\right)\\
			&+\frac{1}{nh^2}\sum_{j=-n}^{n-1}\int_{w_j}^{w_{j+1}}\biggl(K\Bigl(\frac{z-x}{h};h\Bigr)\biggr)^2\biggl[\frac{\nu^2(w_j)}{f_{D,n}(w_j)}-\frac{\nu^2(z)}{f_{D,n}(z)}\biggr]\,dz\\
			&+\frac{\mathcal{C}}{n^2a_nh^2}\sum_{j=-n}^{n-1}\int_{w_j}^{w_{j+1}}\biggl(K\Bigl(\frac{z-x}{h};h\Bigr)\biggr)^2\frac{\nu^2(w_j)}{f_{D,n}(w_j)}\,dz=:R_{n,1}(x)+R_{n,2}(x)+R_{n,3}(x).
		\end{align*}
		It holds that
		\begin{align*}
			|R_{n,3}(x)|\leq\frac{\mathcal{C}}{n^2a_n^2h}\|K(\cdot,h)\|_2^2=O\left(\frac{1}{n^2a_nh^{2\beta+1}}\right).
		\end{align*}
		Furthermore,
		\begin{align*}
			R_{n,2}(x)&=\frac{1}{nh^2}\sum_{j=-n}^{n-1}\int_{w_j}^{w_{j+1}}\biggl(K\Bigl(\frac{z-x}{h};h\Bigr)\biggr)^2\nu^2(z)\frac{f_{D,n}(z)-f_{D,n}(w_j)}{f_{D,n}(z)f_{D,n}(w_j)}\,dz\\
			&+\frac{1}{nh^2}\sum_{j=-n}^{n-1}\int_{w_j}^{w_{j+1}}\biggl(K\Bigl(\frac{z-x}{h};h\Bigr)\biggr)^2\biggl[\frac{\nu^2(w_j)-\nu^2(z)}{f_{D,n}(w_j)}\biggr]\,dz
		\end{align*}
		By uniform Lipschitz continuity of $\nu^2$ (see Lemma 3 (ii)) and $f_{D,n}$ (by Assumption \ref{ass:design}), it is immediate that
		\begin{align*}
			|R_{n,2}(x)|&\leq\frac{\mathcal{C}}{n^2a_n^2h^2}\int\biggl(K\Bigl(\frac{z-x}{h};h\Bigr)\biggr)^2\leq\frac{\mathcal{C}}{n^2a_n^2h}\|K(\cdot,h)\|^2\\&=O\left(\frac{1}{n^2a_n^2h^{1+2\beta}}\right).
		\end{align*}
		Using again that $|w_j-z|\leq|w_j-w_{j+1}|\leq1/(na_nc_D)$, the rest of the proof of claim (ii) follows along the lines of the proof of Lemma 5 (ii).
	\end{proof}	
	\subsection{Role of the hyperparameters}\label{sec:hyperrole}
	In this section, we discuss the setting presented in Example 1 of the main document in more detail, in order to shed some light into the role of the parameters $a_n$ and $h$, as well as the assumptions made for our theoretical considerations. 
	In particular, we show that, in a typical setting, the conditions listed in Assumption 4 are satisfied if $h$ is the rate optimal bandwidth. As an example, we consider the case of a function $g\in\mathcal{W}^m(\R), m>5/2,$ of bounded support, $[-1,1]$, say, $f_{\Delta}$ as in Definition (8) in the main document with $a=1,$ i.e.,
	\begin{align*}
		f_{\Delta}(x)=\tfrac{1}{2}e^{-|x|}\quad\text{with}\quad\Phi_{ f_{\Delta}}(t)=\langle t\rangle^{-2},
	\end{align*}
	and $\mathbb{E}[\epsilon_1^4]\leq\infty$, i.e., $M=4$. Here, the parameter $\beta$, which gives the degree of ill-posedness of the problem and which is defined in Assumption 3 in the main document is given by $\beta=2$.
	In this example, Assumption 4 (i) becomes 
	\begin{align*}
		\frac{\ln(n)}{n^{-\frac{1}{2}}a_nh}+\frac{h}{a_n}+\ln(n)^2h+\ln(n)a_n+\frac{1}{na_nh^{5}}=o(1).
	\end{align*}
	Given that $h/a_n=o(1)$, the last term asymptotically dominates the first one, such that  Assumption 4 (i) reduces to
	\begin{align}\label{eq:4i}
		\frac{h}{a_n}+\ln(n)^2h+\ln(n)a_n+\frac{1}{na_nh^{5}}=o(1).\tag{S5}
	\end{align}
	In density deconvolution, where the target density $g$ is contained in a H\"older ball such that $|g^{(r)}(x)-g^{(r)}(x+ \delta)|\leq B\delta^{a}$ for some radius $B>0$, some positive integer $r$ and some $a\in[0,1)$, the rate optimal bandwidth is of order $n^{-1/(2(r+a+\beta)-1)}$. According to our Assumption 1 (i), we have $g\in\mathcal{W}^m$. Due to the embedding $\mathcal{W}^m(\mathbb{R})\subset C^{r+a}(\R)$, we can replace $r+a$ in the above bandwidth by $m-1/2$ in terms of our parametrization of the smoothness of the target function. This yields $h\simeq n^{-1/(2(m+\beta))}$. With this choice of $h$, given that $1/(na_nh^{5})=o(1)$ by Assumption 4 (i), Assumption 4 (ii) becomes
	\begin{align}\label{eq:4ii}
		\sqrt{a_n}+\sqrt{n^{(m+\beta-1)/(m+\beta)}a_n^{2s+1}}=o(1/\sqrt{\ln(n)}).\tag{S6}
	\end{align}
	Next, we will consider the design parameter $a_n$.This parameter ensures that asymptotically, observations on the whole real line are available. This is necessary since the function $\gamma$ will typically be of unbounded support, even if the function $g$ itself is of bounded support as it is the case in this example. Condition \eqref{eq:4ii} can only be satisfied if $a_n=n^{-\varepsilon}$, where $\epsilon$ can only be as small as the parameter $s$ (see Assumption 1 (iii)) allows.  Assumption 1 (iii) in the main document is an assumption on the decay of $\gamma$. As a rule of thumb it can be said that this assumption is met if both functions $g$ and $f_{\Delta}$ decay sufficiently fast. To give some more intuition, we now provide some computations for our specific example, for which the assumption is met for any $s$. We find
	\begin{align*}
		\int\langle z\rangle^{2s}(\gamma(z))^2\,dz&=\int\langle z\rangle^{2s}\left(\int g(t)f_{\Delta}(t-z)\,dt\right)^2\,dz\\
		&=\frac{1}{4}\int\langle z\rangle^{2s}\left(\int g(t)\exp(-|t-z|)\,dt\right)^2\,dz.
	\end{align*}
	Since $g$ is supported on $[-1,1]$ by assumption, we now split the outer integral into integrals over different regions, such that all values of  $t$ with a contribution to the inner integral will be either larger or smaller. The remaining term is bounded by a constant.
	\begin{align*}
		\int\langle z\rangle^{2s}(\gamma(z))^2\,dz&\leq\mathcal{C}+\frac{1}{4}\int_{|z|>1}\langle z\rangle^{2s}\left(\int_{|t|\leq1} g(t)\exp(-|t-z|)\,dt\right)^2\,dz.
	\end{align*}
	If $z>1$ in the outer integral, $\exp(-|t-z|)=\exp(-z+t)$ and thus
	\begin{align*}
		\int_{z>1}\exp(-z)\langle z\rangle^{2s}\left(\int_{|t|\leq1} g(t)\exp(t)\,dt\right)^2\,dz<\infty,
	\end{align*}
	for any $s>0$. Analogously,
	\begin{align*}
		\int_{z<-1}\exp(z)\langle z\rangle^{2s}\left(\int_{|t|\leq1} g(t)\exp(-t)\,dt\right)^2\,dz<\infty,
	\end{align*}
	for any $s>0$. Therefore, in the setting of this example, Assumption 1 (iii) holds for any $s$,such that $a_n$ can be chosen of order $n^{-\varepsilon}$ for $\varepsilon$ arbitrarily small. In this case, the remaining conditions in \eqref{eq:4i} and \eqref{eq:4ii} become $1/(na_nh^5)=o(1)$ and $\sqrt{\ln(n)a_n}=o(1)$, respectively. Using $a_n=n^{-\varepsilon}$ and $h=n^{-1/(2(m+\beta))}\geq n^{-1/9}$, we find that Assumption 4 holds for any $\varepsilon<4/9$.
	
	\subsection{Extensions: Details}\label{sec:extensionsdetails}
	Our theoretical developments for the procedure in Section 5 in the main document actually involve a sample splitting. To this end, let $(d_n)_{n\in\N}$ be a sequence of natural numbers, $d_n\to\infty$ and $d_n=o(n)$, $1/d_n=o(1/\ln(n)^2)$ and let $\mathcal{J}_n:=\{-n,\ldots,n\} \backslash\{-n+k\cdot d_n\,|\,1\leq k\leq 2n/d_n\}$, i.e., we remove each $d_n$-th data point from our original sample. Now, set $\mathcal{Y}_{1}:=\{Y_j\,|\,j\in\mathcal{J}_n\}$ as well as $\mathcal{Y}_{2}:=\{Y_j\,|\,j\in\{-n,\ldots,n\}\backslash\mathcal{J}_n\}$. This way, the asymptotic properties of the estimator based on the main part of the sample, $\mathcal{Y}_1$, remain the same. Let further, for $j\in\mathcal{J}_n$,  $\vartriangle_j$ denote the difference of $\omega_j$ and its left neighbor, that is, $\vartriangle_j=1/(na_n)$ if $j-1\in\mathcal{J}_n$ and      $\vartriangle_j=2/(na_n)$ else. Define the estimator $\tilde g_n$, based on $\mathcal{Y}_1$ by
	\begin{align*}
		\tilde g_n(x;h)=\frac{1}{h}\sum_{j\in\mathcal{J}_n}Y_j\vartriangle_jK\left(\frac{w_j-x}{h};h\right).
	\end{align*} We now formulate an analog of Theorem 1 under Assumption 5.
	\begin{theorem}\label{Thm:MainW}
		Let Assumptions  \ref{AssKernel}, 4 (i) and  5  be satisfied.  Let further $\tilde{\nu}_n$ be a nonparametric estimator of the standard deviation in model \eqref{modeleta} based on $\mathcal{Y}_2$ such that for some sequence of positive numbers $b_n\to0$ for which $a_n/b_n=o(1/\ln(n))$ we have that 
		\begin{align}\label{eq:vest}
			\mathbb{E}\left[ \sup_{|j|\leq b_nn}|\tilde\nu_n(\omega_j)-\nu(\omega_j)|\right]=o\big(1/(\ln(n)\ln\ln(n))\big)\quad\text{and}\quad\tilde{\nu}_n>\tilde{\sigma}>0\tag{S7}
		\end{align}
		for some constant $\tilde{\sigma}>0$. 	
		
		\begin{enumerate}
			\item There exists a sequence of independent standard normally distributed random variables $(Z_n)_{n\in\Z}$, independent of $\tilde{\nu}_n$, such that for
				\begin{align}\label{Gaussprozesstildetilde}
					\widetilde{\mathbb{D}}_n(x) & =\frac{\sqrt{na_nh^{1+2\beta}}}{\tilde{\nu}(x)}\bigl(\tilde g_n(x;h)-\mathbb{E}[\tilde g_n(x;h)]\bigr),\notag\\
					\widetilde{\mathbb{G}}_{n}(x) & =\frac{\sqrt{na_nh^{1+2\beta}}}{h\, \tilde{\nu}_n(x)} \sum\limits_{j\in\mathcal{J}_n,|j|\leq n b_n}\tilde\nu_n(\omega_j)\vartriangle_j Z_{j} K\left(\frac{w_j-x}{h};h\right),\tag{S8}
				\end{align}
			we have that
			\[ \forall\ \alpha \in (0,1):\lim_{n \to \infty} \mathbb{P}\big(\|\widetilde{\mathbb{D}}_{n}\|\leq q_{ \|\widetilde{\mathbb{G}}_{n}\|}(\alpha) \big)= \alpha,\]
			where $q_{ \|\widetilde{\mathbb{G}}_{n}\|}(\alpha)$ denotes the $\alpha$-quantile of $ \|\widetilde{\mathbb{G}}_{n}\|$.	
			\item If, in addition,  $\sqrt{na_nh^{2m+2\beta}}+\sqrt{na_n^{2s+1}}+1/\sqrt{na_nh^2} =o(1/\sqrt{\ln(n)})$.
			and if Assumption \ref{AssSmoothness} is satisfied, $\mathbb{E}[\tilde g_n(x;h)]$ in (11) can be replaced by $g(x).$ 			    
		\end{enumerate}   
	\end{theorem}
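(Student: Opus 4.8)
The plan is to rerun the argument of Theorem \ref{GaussAppr}, modified to accommodate three features that Assumption \ref{AssConvergenceW} forces upon us. First, under (W) the deconvolution is no longer local, so the approximating process cannot be distribution free; I would use the conditionally Gaussian process $\widetilde{\mathbb G}_n$ of \eqref{Gaussprozesstildetilde}, which carries the variance weights, and work conditionally on the variance sample $\mathcal Y_2$, taking expectations at the end. Second, to make the normals produced by the Brownian strong approximation independent of $\tilde\nu_n$, I would build that approximation on the probability space generated by $\mathcal Y_1=\{Y_j:j\in\mathcal J_n\}$ alone; since $\mathcal Y_1\perp\mathcal Y_2$ this delivers $(Z_j)$ independent of $\tilde\nu_n$, which is the sole purpose of the sample split. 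Third, the kernel tails admitted by (W) decay only like $|w|^{-1}h^{-\beta-1}$ (cf.\ \eqref{eq:supK}, \eqref{Eq:KL1}), so I would truncate the sum defining $\tilde g_n$ to indices $|j|\le nb_n$ and bound the discarded part via Lemma \ref{tailK}, case (W); this is exactly why $\tilde\nu_n$ is only required to be consistent on the growing range $|j|\le nb_n$.

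For claim 1, I would fix (as in the proof of Theorem \ref{GaussAppr}) a polynomially small $\rho_n$ with $\rho_n\sqrt{\ln n}=o(1)$ and bound
\[
\bigl|\mathbb P(\|\widetilde{\mathbb D}_n\|\le q_{\|\widetilde{\mathbb G}_n\|}(\alpha))-\alpha\bigr|\le\sup_{x\in\R}\mathbb P\bigl(\bigl|\|\widetilde{\mathbb G}_n\|-x\bigr|\le\rho_n\bigr)+\mathbb P\bigl(\bigl|\|\widetilde{\mathbb D}_n\|-\|\widetilde{\mathbb G}_n\|\bigr|>\rho_n\bigr).
\]
The first term is treated by conditioning on $\mathcal Y_2$, applying Theorem 2.1 of \cite{CheCheKat2014} conditionally (conditionally $\|\widetilde{\mathbb G}_n\|$ is the supremum of a non-degenerate Gaussian process, hence has an absolutely continuous law, and so does the mixture), and averaging; this gives $4\rho_n(\mathbb E\|\widetilde{\mathbb G}_n\|+1)$, and an entropy bound for the $\tilde\nu_n$-weighted kernel process in the spirit of Lemma \ref{Le:Entropy} together with Dudley's inequality gives $\mathbb E\|\widetilde{\mathbb G}_n\|\le\mathcal C\sqrt{\ln n}$, so this term is $o(1)$. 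The second term splits into the same three pieces as in Theorem \ref{GaussAppr}: (i) \emph{truncation} --- writing $\widetilde{\mathbb D}_n=\widetilde{\mathbb D}_n^{\mathrm{tr}}+\widetilde{\mathbb D}_n^{\mathrm{tail}}$ with $\widetilde{\mathbb D}_n^{\mathrm{tr}}$ keeping $|j|\le nb_n$, Lemma \ref{tailK} under (W) makes $\operatorname{Var}\widetilde{\mathbb D}_n^{\mathrm{tail}}(x)$ uniformly small in $x\in[0,1]$, and a Borell tail bound plus Dudley's inequality (as for $R_{n,3}$ in Theorem \ref{GaussAppr}) give $\|\widetilde{\mathbb D}_n^{\mathrm{tail}}\|=o_{\mathbb P}(1/\sqrt{\ln n})$ once $b_n\to0$ is chosen with enough slack against $a_n\ln(n)/h$; (ii) \emph{strong approximation of the bulk} --- standardising $\xi_j=\eta_j/\nu(w_j)$ and invoking Corollary 4, §5 in \cite{Sakhanenko} on the $\mathcal Y_1$-space produces i.i.d.\ $\mathcal N(0,1)$ variables $(Z_j)$, independent of $\tilde\nu_n$, and the Abel summation of \eqref{Yn0+} with the (W)-forms of the kernel bounds controls $\|\tilde\nu_n\widetilde{\mathbb D}_n^{\mathrm{tr}}-\nu\widetilde{\mathbb G}_{n,0}\|$ by $O_{\mathbb P}\!\bigl(\ln(n)n^{2/\mom}/(h\sqrt{na_nh})\bigr)$, where $\widetilde{\mathbb G}_{n,0}$ is $\widetilde{\mathbb G}_n$ with $\tilde\nu_n$ replaced by the true $\nu$, and the calibration of $b_n,d_n$ with Assumption \ref{AssBandwidth}(i) makes this $o(\rho_n)$; (iii) \emph{inserting $\tilde\nu_n$} --- as for $R_{n,1},R_{n,2}$ in Theorem \ref{GaussAppr}, $\widetilde{\mathbb G}_n-\widetilde{\mathbb G}_{n,0}$ separates into a part from the weights $\tilde\nu_n(\omega_j)$ inside the sum and a part from the factor $1/\tilde\nu_n$; conditionally on $\mathcal Y_2$ the former is a centred Gaussian process whose standard-deviation metric is dominated by $\sup_{|j|\le nb_n}|\tilde\nu_n(\omega_j)-\nu(\omega_j)|$ times that of the reference process, so the Gaussian comparison Lemma \ref{Le:Comparison} with the entropy bound yields $\|\widetilde{\mathbb G}_n-\widetilde{\mathbb G}_{n,0}\|\le\mathcal C\sqrt{\ln n}\,\sup_{|j|\le nb_n}|\tilde\nu_n(\omega_j)-\nu(\omega_j)|$ on an event of probability $\to1$, which by \eqref{eq:vest} and Markov's inequality is $o_{\mathbb P}(1/\sqrt{\ln n})$, the $\ln\ln n$ in \eqref{eq:vest} absorbing the maximal inequality over $|j|\le nb_n$. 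Combining (i)--(iii) proves claim 1.

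Claim 2 is the bias statement and follows exactly as part (ii) of Theorem \ref{GaussAppr}: since $d_n\to\infty$, the deterministic bias of $\tilde g_n$ obeys the bound of Lemma \ref{Lemma:BiasVarianz}(i) under (W) (the argument of Lemma \ref{Lemma:BiasVarianzD} adapts, with $\vartriangle_j$ in place of $1/(na_n)$), so after multiplying by $\sqrt{na_nh^{1+2\beta}}$ the error of replacing $\mathbb E[\tilde g_n(x;h)]$ by $g(x)$ is $O(\sqrt{na_nh^{2m+2\beta}}+1/\sqrt{na_nh^2})+o(\sqrt{na_n^{2s+1}})$, which is $o(1/\sqrt{\ln n})$ under the extra hypothesis of claim 2.

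The main obstacle is step (iii) together with the bookkeeping it requires: one must control, uniformly over the growing index set $\{|j|\le nb_n\}$ and at the sharp logarithmic rate, the effect of substituting the data-driven weights $\tilde\nu_n$ for the unknown $\nu$ inside a Gaussian process, which is feasible only because the strong approximation in (ii) was arranged --- via the sample split --- to output normals independent of $\tilde\nu_n$. A secondary but pervasive difficulty is that under (W) every kernel estimate borrowed from the proof of Theorem \ref{GaussAppr} costs an extra power of $h^{-1}$; this is what makes the truncation in (i) necessary and what so tightly constrains the admissible ranges of $a_n$, $b_n$ and $d_n$.
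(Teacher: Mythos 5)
Your proposal follows essentially the same route as the paper's own proof: sample splitting so that the Sakhanenko normals built from $\mathcal{Y}_1$ are independent of $\tilde\nu_n$, conditioning on $\mathcal{Y}_2$ to apply the anti-concentration bound and Dudley/entropy estimate for $\mathbb{E}\|\widetilde{\mathbb{G}}_n\|$, a truncation to $|j|\le nb_n$ controlled by the (W)-tail bounds of Lemma \ref{tailK} and a Borell-type small-variance argument, and the Gaussian comparison of Lemma \ref{Le:Comparison} on the event $\sup_{|j|\le nb_n}|\tilde\nu_n(\omega_j)-\nu(\omega_j)|\le 1/(\ln(n)\ln\ln(n))$ to swap $\tilde\nu_n$ for $\nu$. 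The only (immaterial) difference is that the paper truncates the Gaussian process rather than the data process, organizing these steps as Lemmas \ref{Le:tilde1}--\ref{Le:tilde3}.
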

	%
	\begin{proof}[Proof of Theorem \ref{Thm:MainW}]
		We require that 
		\begin{align}
			\|\widetilde{\mathbb{D}}_n- \widetilde{\mathbb{G}}_n\|=o_{\mathbb{P}}(1/\sqrt{\ln(n)}),\tag{Step 1}
		\end{align}	
		as well as
		\begin{align}
			\mathbb{E}\|\widetilde{\mathbb{G}}_n\|=O_{\mathbb{P}}(\sqrt{\ln(n)}).\tag{Step 2}
		\end{align}	
		\paragraph{Step 1 a: Gaussian Approximation}~\\
		Lemmas \ref{Le:tilde1} and \ref{Le:14b} are in preparation for the Gaussian approximation where the target process $\widetilde{\mathbb{G}}_n$ is first approximated  by processes $\widetilde{\mathbb{G}}_{n,0}^{b_n}$ and $\widetilde{\mathbb{G}}_{n,0}$.
		\begin{lemma}\label{Le:tilde1}
			We shall show that 	
			\begin{align}\label{eq:unnorm}
				\|\nu\, \widetilde{\mathbb{G}}_{n,0}^{b_n}-\tilde\nu_n\,\widetilde{\mathbb{G}}_n\|=o_{\mathbb{P}}(1/\sqrt{\ln(n)}),
			\end{align}
			where
			\begin{align*}
				\widetilde{\mathbb{G}}_{n,0}^{b_n}:=\frac{\sqrt{na_nh^{1+2\beta}}}{h\nu(x)} \sum\limits_{j\in\mathcal{J}_n,|j|\leq n b_n}\nu(\omega_j)\vartriangle_j Z_{j} K\left(\frac{w_j-x}{h};h\right).
			\end{align*}
		\end{lemma}
		\begin{proof}
			%
			%
			%
			%
			Let $\mathcal{Z}:=\{Z_{-n},\ldots,Z_{n}\}$ be iid standard Gaussian random variables as in the proof of Theorem 1, and $c_n:=1/\ln(n)\ln\ln(n)$.
			\begin{align*}
				&\mathbb{P}_{\mathcal{Z},\mathcal{Y}_2}\bigl(\|\widetilde{\mathbb{G}}_{n,0}^{b_n}-\widetilde{\mathbb{G}}_{n}\|>\delta/\sqrt{\ln(n)}\bigr)\leq \mathbb{P}_{\mathcal{Z},\mathcal{Y}_2}\biggl(\sup_{|j|\leq b_n n}|\nu(w_j)-\tilde\nu_n(w_j)|>c_n\biggr)\\
				&+\mathbb{P}_{\mathcal{Z},\mathcal{Y}_2}\biggl(\|\widetilde{\mathbb{G}}_{n,0}^{b_n}-\widetilde{\mathbb{G}}_{n}\|>\delta/\sqrt{\ln(n)}\,;\,\sup_{|j|\leq b_n n}|\nu(w_j)-\tilde\nu_n(w_j)|\leq c_n\biggr)=P_{n,1}+P_{n,2}.
			\end{align*}
			By assumption, $P_{n,1}=o(1)$.
			\begin{align*}
				&	P_{n,2}\leq \mathbb{P}_{\mathcal{Z},\mathcal{Y}_2}\biggl(\|\widetilde{\mathbb{G}}_{n,0}^{b_n}-\widetilde{\mathbb{G}}_{n}\|I_{\{\sup_{|j|\leq b_n n}|\nu(w_j)-\tilde\nu_n(w_j)|\leq c_n\}}>\delta/\sqrt{\ln(n)}\biggr)\\
				&=\mathbb{E}_{\mathcal{Y}_2}\left[\mathbb{P}_{\mathcal{Z},\mathcal{Y}_2}\biggl(\|\widetilde{\mathbb{G}}_{n,0}^{b_n}-\widetilde{\mathbb{G}}_{n}\|I_{\{\sup_{|j|\leq b_n n}|\nu(w_j)-\tilde\nu_n(w_j)|\leq c_n\}}>\delta/\sqrt{\ln(n)}\,\bigg|\,\mathcal{Y}_2\biggr)\right]\\
				&\leq\mathbb{E}_{\mathcal{Y}_2}\left[\mathbb{E}_{\mathcal{Z},\mathcal{Y}_2}\biggl(\|\widetilde{\mathbb{G}}_{n,0}^{b_n}-\widetilde{\mathbb{G}}_{n}\|I_{\{\sup_{|j|\leq b_n n}|\nu(w_j)-\tilde\nu_n(w_j)|\leq c_n\}}\,\bigg|\,\mathcal{Y}_2\biggr)\right]\cdot\frac{\sqrt{\ln(n)}}{\delta},
			\end{align*}
			by the conditional Markov inequality. Set
			\begin{align*}
				R_n(x):=\big(\widetilde{\mathbb{G}}_{n,0}^{b_n}(x)-\widetilde{\mathbb{G}}_{n}(x)\big)I_{\{\sup_{|j|\leq b_n n}|\nu(w_j)-\tilde\nu_n(w_j)|\leq c_n\}}
			\end{align*}
			and 
			\begin{align*}
				\widetilde R_n(x):=\frac{2h^{\beta}c_n}{\sqrt{na_nh}}\sum_{|j|\leq b_nn}Z_jK\biggl(\frac{w_j-x}{h};h\biggr).
			\end{align*}
			Since $d_{R_n}(s,t)\leq d_{\widetilde{R}_{n}}(s,t)$ for all $s,t\in[0,1]$ and for all samples $\mathcal{Y}_2$ and $\mathbb{E}\big[\|\widetilde{R}_n\|\,|\,\mathcal{Y}_2\big]=\mathbb{E}\|\widetilde{R}_n\|=O(c_n\sqrt{\ln(n)})=o(1/\sqrt{\ln(n)})$, it follows by Lemma 7 that $\mathbb{E}_{\mathcal{Z},\mathcal{Y}_2}[\|R_n\|\,|\,\mathcal{Y}_2]\leq\mathbb{E}[\|\widetilde R_{n}\|]$ for all samples. Therefore, $P_{n,2}=o(1/\sqrt{\ln(n)})$ and the claim follows.
		\end{proof}
		\begin{lemma}\label{Le:14b}
			We have that 
			\begin{align*}
				\|\nu\widetilde{\mathbb{G}}_{n,0}^{b_n}-\nu\widetilde{\mathbb{G}}_{n,0}\|=o_{\mathbb{P}}(1/\sqrt{\ln(n)}).
			\end{align*}
			where 	
			\begin{align}\label{eq:firstapproxprocess}
				\widetilde{\mathbb{G}}_{n,0}(x):=\frac{h^{\beta}}{\nu(x)\sqrt{na_nh}}\sum_{\substack {j\in \mathcal{J}_n\\nb_n<|j|\leq n}}\nu(w_j)\Delta_jZ_jK\biggl(\frac{w_j-x}{h};h\biggr).\tag{S9}
			\end{align}
		\end{lemma}
		\begin{proof}
			Let
			\begin{align*}
				R_n(x)&:=\nu\widetilde{\mathbb{G}}_{n,0}^{b_n}-\nu\widetilde{\mathbb{G}}_{n,0}=\frac{\sqrt{na_nh^{1+2\beta}}}{h}\sum_{\substack {j\in \mathcal{J}_n\\nb_n<|j|\leq n}}\nu(w_j)\Delta_jZ_jK\left(\frac{w_j-x}{h};h\right).
			\end{align*}
			Then, since $\Delta_j^2\leq4/(n^2a_n^2)$ and $\mathcal{J}_n\subset\{-n,\ldots,n\},$
			\begin{align*}
				{\rm Var}[R_n(x)]
				&\leq\frac{4h^{2\beta}}{na_nh}\sum_{nb_n<|j|\leq n}\nu^2(w_j)K^2\left(\frac{w_j-x}{h};h\right).
			\end{align*}
			From the proof of Lemma \ref{Lemma:BiasVarianz} we deduce
			\begin{align*}
				{\rm Var}[R_n(x)]
				&\leq\mathcal{C}\left(\frac{ha_n}{b_n}+\frac{1}{na_nh}\right)=o\left(\frac{1}{\ln(n)^2}\right).
			\end{align*}
			An application of the following Lemma \ref{Le:SmallVariance} concludes this proof.
			\end{proof}
			\begin{lemma}\label{Le:SmallVariance}
				Let $(\mathbb{X}_n(t), t\in T)$ be an almost surely bounded Gaussian process on a compact index set $T$ with $\sigma^2_{T,n}:=\|\mathrm{Var}(\mathbb{X}_n)\|=o(1/\ln(n)^2)$ such that $N(T,\delta,d_{\mathbb{X}_n})\leq(n/\delta)^{a}$ for all $\delta\leq\sigma_{T,n}$ and some $a\in(0,\infty)$. Then
				\begin{align*}
					\|\mathbb{X}_n\|=o_{\mathbb{P}}(1/\sqrt{\ln(n)}).
				\end{align*}
			\end{lemma}
\begin{proof}
			Fix $\delta>0$. An application of Theorem 4.1.2 in \cite{adltay2007} yields, for large enough $n\in\N$ and a universal constant $K$
			\begin{align*}
				&\mathbb{P}\biggl(\|\mathbb{X}_n\|_T>\tfrac{\delta}{\sqrt{\ln(n)}}\biggr)
				\leq2\biggl(\tfrac{Kn\delta}{2\sqrt{\ln(n)}\sigma_{T,n}^2}\biggr)^a\cdot\tfrac{2\sqrt{\ln(n)}\sigma_{T,n}}{\sqrt{2\pi}\delta}\cdot\exp\bigg(-\tfrac{\delta^2}{8\ln(n)\sigma^2_{T,n}}\bigg).
			\end{align*}
			Now
			\begin{align*}
				\mathbb{P}\biggl(\|\mathbb{X}_n\|_T>\tfrac{\delta}{\sqrt{\ln(n)}}\biggr)
				&\leq \mathcal{C}n^{a+1}\delta^{a-1}\sigma_{n,T}^{-2a+1}\exp\left(-\sigma_{T,n}^{-1}\right)\left(\exp\left(-\sigma_{T,n}^{-1}\right)\right)^{\frac{\delta^2}{o(1)}-1}.
			\end{align*}
			Since $\sigma_{n,T}^{-2a+1}\exp\left(-\sigma_{T,n}^{-1}\right)\to0$ as $\sigma_{n,T}\to0$ and $\left(\exp\left(-\sigma_{T,n}^{-1}\right)\right)^{\frac{\delta^2}{o(1)}-1}=o(n^{-b})$ for any fixed $b\in(0,\infty)$, the claim of the lemma now follows.
		\end{proof}
		\begin{lemma}\label{Le:tilde2}
			We have that
			\begin{align*}
				\|\widetilde{\mathbb{G}}_{n}-\widetilde{\mathbb{D}}_n\|=o_{\mathbb{P}}(1/\sqrt{\ln(n)}).\end{align*}
		\end{lemma}
		\begin{proof}
			Since by assumption (\ref{eq:vest}), $\tilde\nu_n$ is bounded away from zero, it suffices to show that
			\begin{align*}
				\|\tilde\nu_n\widetilde{\mathbb{G}}_{n}-\tilde\nu_n\widetilde{\mathbb{D}}_n\|=o_{\mathbb{P}}(1/\sqrt{\ln(n)}).
			\end{align*}
			We estimate
			\begin{align*}
				\|\tilde\nu_n\, \widetilde{\mathbb{G}}_{n}-\tilde\nu_n\, \widetilde{\mathbb{D}}_n\|&\leq\, \|\tilde\nu_n\, \widetilde{\mathbb{G}}_{n}-\nu\widetilde{\mathbb{G}}_{n,0}^{b_n}\|+ \|\nu\widetilde{\mathbb{G}}_{n,0}^{b_n}-\nu\widetilde{\mathbb{G}}_{n,0}\|+\|\nu\widetilde{\mathbb{G}}_{n,0}-\tilde\nu_n\, \widetilde{\mathbb{D}}_n\|\\
				&=o_{\mathbb{P}}\left(1/\sqrt{\ln(n)}\right) + \|\nu\widetilde{\mathbb{G}}_{n,0}-\tilde\nu_n\, \widetilde{\mathbb{D}}_n\|
			\end{align*}
			The claim now follows along the lines of the Gaussian approximation in the proof of Theorem 1.
		\end{proof}
		\paragraph{Step 2: Expectation of the maximum}
		\begin{lemma}\label{Le:tilde3}
			\begin{align*}
				\mathbb{E}\|\widetilde{\mathbb{G}}_{n}\|=O(\sqrt{\ln(n)}).
			\end{align*}
		\end{lemma}
		\begin{proof}
			Write $\mathbb{E}[\|\widetilde{\mathbb{G}}_{n}\|]=\mathbb{E}\big[\mathbb{E}[\|\widetilde{\mathbb{G}}_{n}\|]\,\big|\,\mathcal{Y}_2\big]$ and define
			\begin{align*}
				\mathbb{X}_n(t):=\frac{2M_nh^{\beta-1/2}}{\sqrt{na_n}\tilde{\sigma}}\sum_{j=-n}^nZ_j\nu(w_j)
				K\left(\frac{w_j-t}{h};h\right),
			\end{align*}
			where
			$
			M_n:=\sqrt{\max_{|j|\leq b_nn}|\tilde{\nu}^2(w_j)-\nu^2(w_j)|\frac{1}{\sigma^2}}+1.
			$
			Conditionally on $\mathcal{Y}_2$, $(\widetilde{\mathbb{G}}_n(t),t\in[0,1])$ is a Gaussian process and we find for $s,t\in[0,1]$ and for all possible samples $\mathcal{Y}_2$ the following set of inequalities hold
			{\small
				\begin{align*}
					&\mathbb{E}\big[\big|\widetilde{\mathbb{G}}_{n}(s)-\widetilde{\mathbb{G}}_{n}(t)\big|^2\,\big|\,\mathcal{Y}_2\big]\leq\frac{na_nh^{2\beta-1}}{\tilde{\sigma}^2}\sum_{|j|\leq nb_n}\tilde{\nu}^2(w_j)\Delta_j^2
					\left|K\left(\frac{w_j-s}{h};h\right)-K\left(\frac{w_j-t}{h};h\right)\right|^2\\
					&\leq\frac{4h^{2\beta-1}}{na_n\tilde{\sigma}^2}\sum_{|j|\leq nb_n}\bigg[|\tilde{\nu}^2(w_j)-\nu^2(w_j)|\frac{\nu^2(w_j)}{\sigma^2}+\nu^2(w_j)\bigg]
					\left|K\left(\frac{w_j-s}{h};h\right)-K\left(\frac{w_j-t}{h};h\right)\right|^2\\
					&\leq\mathbb{E}\big[\big|\mathbb{X}_{n}(s)-\mathbb{X}_{n}(t)\big|^2\,\big|\,\mathcal{Y}_2\big].
				\end{align*}
			}
			An application of Lemma 7 yields, for all samples,
			$
			\mathbb{E}\big[\|\widetilde{\mathbb{ G}}_{n}\|\,\big|\,\mathcal{Y}_2\big]\leq\mathbb{E}\big[\|X_{n}\|\,\big|\,\mathcal{Y}_2\big].
			$
			Therefore,
			$
			\mathbb{E}[\|\widetilde{\mathbb{G}}_{n}\|]\leq
			\mathbb{E}\Big[\mathbb{E}[\|X_{n}\|]\,\big|\,\mathcal{Y}_2\big]\Big]\leq\mathcal{C}\mathbb{E}\|\mathbb{G}_n\|\cdot\mathbb{E}M_n.
			$
			
			An application of Jensen's inequality and \eqref{eq:vest} yield  $\mathbb{E}M_n\leq2$ for sufficiently large $n\in\N.$
		\end{proof}
		%
		\paragraph{Step 3: Anti-Concentration}~\\
		Following the arguments given in the proof of Theorem 1 concludes the proof of this theorem.
	\end{proof}
	
\end{appendix}	

\end{document}